\newcommand{\m}{\mathbf} 
\newcommand{\bimp}{\mathbin{-\!\!*}}
\newcommand{\jn}{\vee}
\newcommand{\mt}{\wedge}
\newcommand{\ncdot}{\cdot}
\newcommand{\icdot}{\cdot}
\newcommand{\lcdot}{\!\ncdot} 
\newcommand{\rcdot}{\ncdot\!} 
\newcommand{\nstar}{*}
\newcommand{\p}{\mathsf{p}}
\newcommand{\q}{\mathsf{q}}
\newcommand{\da}{\mathord{\downarrow}}
\tikzstyle{every node} = [draw, fill=white, ellipse, inner sep=0pt, minimum size=15pt]
\tikzstyle{n} = [draw=none, rectangle, inner sep=0pt] 
\tikzstyle{i} = [draw, fill=black, circle, inner sep=0pt, minimum size=15pt]
\tikzstyle{d} = [thick,dotted]
\title[Unary-determined distributive \texorpdfstring{$\ell$}{l}-magmas]{Varieties of unary-determined distributive \texorpdfstring{\\$\ell$}{l}-magmas and bunched implication algebras}
\author[N.~Alpay]{Natanael Alpay\lmcsorcid{0000-0002-1505-7531}}[a]
\author[P.~Jipsen]{Peter Jipsen\lmcsorcid{0000-0001-8608-808X}}[b]
\author[M.~Sugimoto]{Melissa Sugimoto\lmcsorcid{0009-0004-7168-8954}}[b]
\address{University of California Irvine, California 92697, USA} 
\email{nalpay@uci.edu}
\address{Chapman University, Orange, California 92866, USA}
\email{jipsen@chapman.edu, msugimoto@chapman.edu}
\keywords{distributive lattice-ordered magmas, bunched implication algebras, idempotent semirings, enumerating finite models}
\begin{document}

\begin{abstract}
A distributive lattice-ordered magma ($d\ell$-magma) $(A,\wedge,\vee,\cdot)$ is a distributive lattice with a binary operation $\cdot$ that preserves joins in both arguments, and when
$\cdot$ is associative then $(A,\vee,\cdot)$ is an idempotent semiring. A $d\ell$-magma with a top $\top$ is \emph{unary-determined} if $x\ncdot  y=(x\rcdot \top\wedge y)$ $\vee(x\wedge \top\lcdot y)$. These algebras are term-equivalent to a subvariety of distributive lattices with $\top$ and two join-preserving unary operations $\p,\q $. We obtain simple conditions on $\p,\q $
such that $x\ncdot  y=(\p x\wedge y)\vee(x\wedge \q y)$ is associative, commutative, idempotent and/or has an identity element.

This generalizes previous results on the structure of doubly idempotent semirings and, in
the case when the distributive lattice is a Heyting algebra, it provides structural insight into unary-determined algebraic models of bunched implication logic.
We also provide Kripke semantics for the algebras under consideration, which leads to more efficient algorithms for constructing finite models. We find all subdirectly irreducible algebras up to cardinality eight in which $\p=\q $ is a closure operator, as well as all finite unary-determined bunched implication chains and map out the poset of join-irreducible varieties generated by them.
\end{abstract}

\maketitle

\section{Introduction}
Idempotent semirings $(A,\vee,\cdot)$ play an important role in several areas of computer science, such as network optimization, formal languages, Kleene algebras and program semantics. In this setting they are often assumed to have constants $0,1$ that are the additive and multiplicative identity respectively, with $0$ also being an absorbing element.
However semirings are usually only assumed to have two binary operations $+,\cdot$ that
are associative such that $+$ is also commutative and $\cdot$ distributes over $+$
from the left and right \cite{HW1998}.
A semiring is (additively) idempotent if $x+x=x$, hence
$+$ is a (join) semilattice, and \emph{doubly idempotent} if $x\cdot x=x$ as well. If
$\cdot$ is also commutative, then it defines a meet semilattice. The special case when these two semilattices coincide
corresponds exactly to the variety of distributive lattices, which have a well understood
structure theory.

In \cite{AJ2020} a complete structural description was given for finite commutative doubly idempotent semirings where either the multiplicative semilattice is a chain, or the additive semilattice is a Boolean algebra. Here we show that the second description can
be significantly generalized to the setting where the additive semilattice is a distributive
lattice, dropping the assumptions of finiteness, multiplicative commutativity and idempotence
in favor of the algebraic condition $x\ncdot  y=(\p x\wedge y)\vee(x\wedge \q y)$ for two unary join-preserving operations $\p,\q $. While this property is quite restrictive in general, it does hold in all idempotent Boolean magmas and expresses a binary operation in terms of two simpler unary operations. A full structural description of all (finite) idempotent semirings is unlikely, but in the setting of unary-determined idempotent semirings progress is possible.

In Section 2 we provide the needed background and prove a term-equivalence between
a subvariety of top-bounded $d\ell$-magmas and a subvariety of top-boun\-ded distributive lattices with two unary operators. This is then specialized to cases where $\cdot$ is
associative, commutative, idempotent or has an identity element.
In the next section we show that when the distributive lattice is a Brouwerian algebra
or Heyting algebra, then $\cdot$ is residuated if and only if both $\p$ and $\q $ are
residuated. This establishes a connection with bunched implication algebras (BI-algebras)
that are the algebraic semantics of bunched implication logic \cite{OP1998}, used in the setting of separation logic for program verification, including reasoning about pointers \cite{Rey2002} and concurrent processes \cite{OHe2007}. Section 4 contains Kripke semantics
for $d\ell$-magmas, called Birkhoff frames, and for the two unary operators $\p,\q $. This establishes the connection to the previous results in \cite{AJ2020} and leads to 
the main result (Thm.~\ref{assoc}) that preorder forest $P$-frames capture a larger
class of multiplicatively idempotent BI-algebras and doubly idempotent semirings. Although the heap models of BI-algebras used in applications are not (multiplicatively) idempotent, they contain idempotent subalgebras and homomorphic images, hence a characterization of unary-determined idempotent BI-algebras does provide insight into the general case.
In the next section we define weakly conservative $\ell$-magmas and their corresponding frames.
In Section 6 we apply the results from the previous sections to count the number of preorder forest $P$-frames up to isomorphism if their partial order is an antichain and also if it is a chain. Finally in Section 7 we calculate all subdirectly irreducible algebras up to cardinality eight in which $\p$ and $\q $ are the same closure operator, and map out the poset of join-irreducible varieties generated by them.

\section{A term-equivalence between distributive lattices with operators}

A \emph{distributive lattice-ordered magma}, or \emph{$d\ell$-magma}, is an algebra $\m A=(A,\wedge,\vee,\cdot)$ such that
$(A,\wedge,\vee)$ is a distributive lattice and $\cdot$ is a \emph{binary operator}, which in this case means a binary operation that distributes over $\vee$, i.e., $x\cdot (y\vee z)=x\cdot y\vee x\cdot z$ and $(x\vee y)\cdot z = x\cdot z\vee y\cdot z$ for all $x,y,z\in A$. Throughout it is assumed that $\cdot$ binds more strongly than $\wedge,\vee$, and as usual the lattice order $\le$ is defined by $x\le y\iff x\wedge y=x$ $(\iff x\vee y=y)$. If the distributive lattice has a top element $\top$ or a bottom element $\bot$ then it is called \emph{$\top$-bounded} or
\emph{$\bot$-bounded}, or simply \emph{bounded} if both exist.
A $d\ell$-magma $\m A$ is \emph{normal} and $\cdot$ is a \emph{normal} operation if $A$ is $\bot$-bounded and satisfies $x\ncdot \bot=\bot=\bot\ncdot x$. 
Similarly, a unary operation $f$ on $A$ is an \emph{operator} if it satisfies the identity $f(x\vee y)=fx\vee fy$, 
and it is \emph{normal} if $f\bot=\bot$. 
For brevity and to reduce the number of nested parentheses, we write function application as $fx$ rather than $f(x)$, with the convention that it has priority over $\cdot$ hence, e.g., $fx\cdot y=(f(x))\ncdot y$ (this convention ensures unique readability).
Note that since operators distribute over $\vee$ in each argument, they are order-preserving in each argument. The operation $f$
is said to be \emph{inflationary} if $x\le fx$ for all $x\in A$.

A binary operation $\cdot$ is said to be \emph{idempotent}
if $x\ncdot x=x$ for all $x\in A$, \emph{commutative} if $x\ncdot y=y\ncdot x$ and \emph{associative} if $(x\ncdot y)\ncdot z=x\ncdot (y\ncdot z)$. A \emph{semigroup} is a set with an associative operation, a \emph{monoid} is a semigroup with an identity element denoted by $1$, a \emph{band} is
a semigroup that is also idempotent, and a \emph{semilattice} is a commutative band.
As usual, a semilattice is partially ordered by $x\sqsubseteq y\iff x\icdot y=x$, and in this
case $x\ncdot y$ is the meet operation with respect to $\sqsubseteq$. We also use this
terminology with the prefix $d\ell$, in which case the magma operation satisfies the
corresponding identities.

A $d\ell$-magma is called \emph{unary-determined} if it is $\top$-bounded and satisfies the identity
\[x\ncdot y=(x\rcdot \top\wedge y)\vee(x\wedge\top\lcdot y).\]
As examples, we mention that all doubly-idempotent semirings with a Boolean join-semilattice are unary-determined (see Lemma~\ref{BAud}). Complete and atomic versions of such semirings are studied in \cite{AJ2020}, and the results from that paper are generalized here to unary-determined $d\ell$-magmas with algebraic proofs that apply to all members of the variety, while the previous results applied only to complete and atomic algebras.

A \emph{$d\ell\p\q$-algebra} is a $\top$-bounded distributive lattice with
two unary operators $\p,\q$ that satisfy
\[
x\wedge \p\top\le \q x, \quad x\wedge \q\top\le \p x.
\]
We note that throughout $\p ,\q $ denote unary operations, and they bind more strongly than $\cdot,\wedge,\vee$.
These two (in)equational axioms are needed for
our first result which shows that unary-determined $d\ell$-magmas and $d\ell \p \q$-algebras
are term-equivalent. This means that although the two varieties are based on different
sets of fundamental operations (called the signature of each class), each fundamental operation of an algebra in one variety is identical to a term-operation constructed from fundamental operations of an algebra in the other variety (and vice versa). From the point of view of category theory, term-equivalent varieties are model categories of the same Lawvere theory.

Note that the (in)equalities above are satisfied in any $\top$-bounded distributive lattice with inflationary operators $\p ,\q $ since then $\p \top=\top=\q  \top$. A \emph{$d\ell \p $-algebra} is a $d\ell \p \q $-algebra that satisfies the identity $\p x=\q x$.

Although unary-determined $d\ell$-magmas and $d\ell \p \q $-algebras seem rather special, they are simpler than general $d\ell$-magmas, yet include interesting idempotent semirings (as reducts).

\begin{thm}\label{pqtermeq} \hfill
\begin{enumerate}[label=\textup{(\arabic*)}]
\item Let $(A,\wedge,\vee,\top,\p ,\q )$ be a $d\ell \p \q $-algebra
and define $x\ncdot y=(\p x\wedge y)\vee(x\wedge \q y)$. Then $(A,\wedge,\vee,\top,\cdot)$ is a unary-determined $d\ell$-magma and
$\p ,\q $ are given by $\p x=x\rcdot \top$ and $\q x=\top\ncdot  x$.
\item Let $(A,\wedge,\vee,\top,\cdot)$ be a unary-determined $d\ell$-magma and define $\p x=x\rcdot \top$, $\q x=\top\ncdot  x$. Then $(A,\wedge,\vee,\top,\p ,\q )$ is a $d\ell \p \q $-algebra and $\cdot$ is definable from
$\p ,\q $ via $x\ncdot  y=(\p x\wedge y)\vee(x\wedge \q y)$.
\end{enumerate}
\end{thm}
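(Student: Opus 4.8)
The plan is to verify both directions by direct computation, relying on distributivity of the lattice together with the fact that the relevant operations preserve joins, and to identify the two (in)equational axioms as exactly the conditions needed to recover $\p$ and $\q$ as the term operations $x\rcdot\top$ and $\top\lcdot x$.

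For part (1), I would first check that the defined operation $x\ncdot y=(\p x\wedge y)\vee(x\wedge\q y)$ is a binary operator, i.e.\ distributes over $\vee$ in each argument. Expanding $x\ncdot(y\vee z)$ and using that $\q$ preserves joins, so $\q(y\vee z)=\q y\vee\q z$, together with distributivity of $\wedge$ over $\vee$, the four resulting meets regroup into $(x\ncdot y)\vee(x\ncdot z)$; the left argument is symmetric, using that $\p$ preserves joins. This makes $(A,\wedge,\vee,\cdot)$ a $d\ell$-magma, and $\top$-boundedness is inherited. Next I would compute $x\rcdot\top=(\p x\wedge\top)\vee(x\wedge\q\top)=\p x\vee(x\wedge\q\top)$, and here the axiom $x\wedge\q\top\le\p x$ collapses the join, giving exactly $x\rcdot\top=\p x$; symmetrically $\top\lcdot x=(\p\top\wedge x)\vee\q x=\q x$ by the axiom $x\wedge\p\top\le\q x$. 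Substituting these two identities back into the definition of $\cdot$ yields the unary-determined identity, finishing part (1).

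For part (2), I would first observe that $\p,\q$ are operators: since $\cdot$ distributes over $\vee$, we have $\p(x\vee y)=(x\vee y)\rcdot\top=x\rcdot\top\vee y\rcdot\top=\p x\vee\p y$, and dually for $\q$. The definability clause $x\ncdot y=(\p x\wedge y)\vee(x\wedge\q y)$ is then just the unary-determined identity rewritten with the abbreviations $\p x=x\rcdot\top$ and $\q x=\top\lcdot x$. It remains to derive the two axioms, and the key trick is to instantiate the unary-determined identity at $\top$. Putting $y=\top$ gives $x\rcdot\top=(x\rcdot\top)\vee(x\wedge\top\lcdot\top)$, whence $x\wedge(\top\ncdot\top)\le x\rcdot\top$, i.e.\ $x\wedge\q\top\le\p x$ after noting $\q\top=\top\ncdot\top$; putting the first argument equal to $\top$ gives $\top\lcdot x=(\top\ncdot\top\wedge x)\vee(\top\lcdot x)$, whence $x\wedge\p\top\le\q x$. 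This shows $(A,\wedge,\vee,\top,\p,\q)$ is a $d\ell\p\q$-algebra.

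The conceptual heart of the argument, rather than a genuine obstacle, is recognizing that the two axioms $x\wedge\p\top\le\q x$ and $x\wedge\q\top\le\p x$ are precisely what is needed to recover the unary operations: without them one obtains only the inequalities $\p x\le x\rcdot\top$ and $\q x\le\top\lcdot x$. In part (1) the axioms are used in the collapsing direction, to discard the extra join term, while in part (2) they emerge for free by evaluating the unary-determined identity at $\top$. Everything else is routine lattice manipulation, so I expect the writeup to be short once these two instantiations are in place.
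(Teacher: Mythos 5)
Your proposal is correct and follows essentially the same route as the paper: verify distributivity of the defined product using join-preservation of $\p,\q$, collapse $x\rcdot\top=\p x\vee(x\wedge\q\top)$ and $\top\lcdot x=(\p\top\wedge x)\vee\q x$ via the two axioms in part (1), and recover the axioms in part (2) by instantiating the unary-determined identity at $\top$. The paper's proof performs exactly these computations, so there is nothing to add.
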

\begin{proof}
(1) Assume $\p ,\q $ are unary operators on a $\top$-bounded distributive lattice $(A,\wedge,\vee,\top)$, and $x\icdot y=(\p x\wedge y)\vee(x\wedge \q y)$. Then
\begin{align*}
x\icdot (y\vee z)
&=(\p x\wedge (y\vee z))\vee(x\wedge \q (y\vee z))\\
&=(\p x\wedge y)\vee (\p x\wedge z)\vee(x\wedge \q y)\vee(x\wedge \q z)\\
&=(\p x\wedge y)\vee(x\wedge \q y)\vee (\p x\wedge z)\vee(x\wedge \q z)\\
&=x\icdot y\vee x\icdot z.
\end{align*}
A similar calculation shows that $(x\vee y)\ncdot z=x\ncdot z\vee y\ncdot z$, hence $\cdot$ is an operator.

Since $\p ,\q $  satisfy $x\wedge \q \top\le \p x$, it follows that $x\rcdot \top=(\p x\wedge\top)\vee(x\wedge \q \top)=\p x\vee(x\wedge \q \top)=\p x$, and similarly
$\top\lcdot x = \q x$ is implied by $x\wedge \p \top\le \q x$. Now the identity $x\ncdot y=(x\rcdot \top\wedge y)\vee(x\wedge \top\lcdot y)$ holds by definition.

(2) Assume $(A,\wedge,\vee,\top,\cdot)$ is a unary-determined $d\ell$-magma, and define $\p x=x\rcdot \top$, $\q x=\top\lcdot y$.
Then $\p ,\q $ are unary operators and $\p x=x\rcdot \top=(x\rcdot \top\wedge \top)\vee(x\wedge \top\ncdot \top)=\p x\vee(x\wedge \q \top)$, hence
$x\wedge \q \top\le \p x$. The inequality $x\wedge \p\top\le \q x$ is proved similarly.
The operation $\cdot$ can be recovered from $\p,\q $ since $x\ncdot y=(\p x\wedge y)\vee(x\wedge \q y)$ follows from the identity we assumed.
\end{proof}

The preceding theorem shows that unary-determined $d\ell$-magmas and $d\ell \p\q$-algebras are ``essentially the same'', and we can choose to work with the signature that is preferred in a given situation. The unary operators of $d\ell \p\q$-algebras are simpler to handle, while the binary operator $\cdot$ is familiar in the semiring setting. Next we examine how standard properties of $\cdot$ are captured by identities in the language of $d\ell \p\q$-algebras.

\begin{lem}\label{pqprops}
Let $(A,\wedge,\vee,\top,\p,\q )$ be a $d\ell \p\q$-algebra
and define $x\ncdot  y=(\p x\wedge y)\vee(x\wedge \q y)$.
\begin{enumerate}[label=\textup{(\arabic*)}]
\item The operator $\cdot$ is commutative if and only if $\p=\q $.
\item If $\p=\q $, then $\cdot$ is associative if and only if $\p((\p x\wedge y)\vee (x\wedge \p y))=(\p x\wedge \p y)\vee (x\wedge \p\p y)$.
\item The operator $\cdot$ is idempotent if and only if $\p$ and $\q $ are inflationary, if and only if $\p\top=\top=\q \top$.
\item If $\cdot$ is idempotent, then it is associative if and only if
\begin{align*}
\p((\p x\wedge y)\vee (x\wedge \q y))&=(\p x\wedge \p y)\vee (x\wedge \q y)\text{ and}\\
\q ((\p x\wedge y)\vee (x\wedge \q y))&=(\p x\wedge y)\vee (\q x\wedge \q y).
\end{align*}
\item The operator $\cdot$ has an identity $1$ if and only if $\p1{=}\top{=}\q 1$ and $(\p x\vee \q x)\wedge 1\le x$.
\item If $\cdot$ has an identity, then $\cdot$ is idempotent.
\end{enumerate}
\end{lem}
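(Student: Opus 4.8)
The plan is to obtain statement (6) as an immediate corollary of parts (3) and (5), both of which I may assume. So suppose $\cdot$ has an identity $1$. By part~(5) the mere existence of $1$ already forces $\p 1=\top=\q 1$, while by part~(3) idempotence of $\cdot$ is equivalent to the single requirement $\p\top=\top=\q\top$. Hence it suffices to promote the equalities at $1$ to the corresponding equalities at $\top$.

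First I would record that $\p$ and $\q$, being join-preserving unary operators, are order-preserving: if $a\le b$ then $b=a\vee b$, so $\p b=\p a\vee\p b\ge\p a$, and likewise for $\q$. Since $1\le\top$ in any $\top$-bounded lattice, monotonicity gives $\top=\p 1\le\p\top\le\top$, whence $\p\top=\top$; the same argument yields $\q\top=\top$. Feeding these two equalities into the third characterization of part~(3) then delivers idempotence of $\cdot$, which is exactly the claim.

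I do not expect a genuine obstacle here: the substantive content is already packaged into part~(5) (the identification $\p 1=\q 1=\top$) and part~(3) (the reduction of idempotence to the values of $\p,\q$ at $\top$), so what remains is just monotonicity together with $1\le\top$. The only point requiring care is to invoke order-preservation of $\p,\q$ explicitly rather than take it for granted. A brute-force alternative that verifies $x\ncdot x=x$ directly from the identity laws $1\ncdot x=x=x\ncdot 1$ is also available, but it is messier, since it essentially amounts to re-deriving the inflationary inequality $x\le\p x\vee\q x$ by hand; I would avoid it in favour of the two-line reduction above.
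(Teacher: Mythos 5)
Your proof is correct, but it takes a different route than the paper. The paper's proof of (6) does not use (5) at all: it argues directly that $x=x\icdot 1\le x\rcdot \top=\p x$ and $x=1\icdot x\le \top\lcdot x=\q x$ (using that $\cdot$ is order-preserving in each argument and the recovery $\p x=x\rcdot\top$, $\q x=\top\lcdot x$ from Theorem~\ref{pqtermeq}), so $\p,\q$ are inflationary, and then invokes the first equivalence of (3). You instead extract $\p 1=\top=\q 1$ from the easy direction of (5), promote it to $\p\top=\top=\q\top$ via monotonicity of $\p,\q$ and $1\le\top$, and invoke the second equivalence of (3). Both arguments are sound and equally short; the paper's is slightly leaner in dependencies (only (3) plus the term-equivalence), while yours trades monotonicity of $\cdot$ for monotonicity of $\p,\q$ and an appeal to (5) --- whose relevant half is itself proved by the same kind of computation ($\p1=1\icdot\top=\top$). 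Your explicit care in justifying order-preservation of $\p,\q$ from join-preservation is warranted; the analogous fact the paper relies on is that the binary operator $\cdot$ is order-preserving in each argument.
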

\begin{proof}
(1)
Assuming $x\ncdot y=y\ncdot x$, we clearly have $x\rcdot \top=\top\lcdot x$, hence $\p x=\q x$. The converse makes use
of commutativity of $\wedge$ and $\vee$:
$x\icdot y=(\p x\wedge y)\vee(x\wedge \p y)=(\p y\wedge x)\vee(y\wedge \p x)=y\icdot x$.

(2)
Assume $\p=\q $. If $\cdot$ is associative then $(x\icdot y)\icdot \top=x\icdot (y\icdot \top)$, so by the previous theorem,
$\p(x\icdot y)=x\icdot \p y$, which translates to
\[
\p((\p x\wedge y)\vee (x\wedge \p y))=(\p x\wedge \p y)\vee (x\wedge \p\p y)\quad (*).
\]

Conversely, suppose $(*)$ holds, and note that $\p(x\icdot y)=\p(y\icdot x)$ by (1), hence
\[
\p((\p x\wedge y)\vee (x\wedge \p y))=(\p x\wedge \p y)\vee (\p\p x\wedge y)=(\p x\wedge \p y)\vee (x\wedge \p\p y)\vee(\p\p x\wedge y)\quad (**).
\]

It suffices to prove $(x\icdot y)\icdot z\le x\icdot (y\icdot z)$ since then $z\icdot (y\icdot x)\le (z\icdot y)\icdot x$ follows by commutativity. Now
\begin{align*}
(x\icdot y)\icdot z
&=[\p((\p x\wedge y)\vee(x\wedge \p y))\wedge z]\vee[((\p x\wedge y)\vee(x\wedge \p y))\wedge \p z]\\
&=[((\p x\wedge \p y)\vee (x\wedge \p\p y))\wedge z]\vee[\p x\wedge y\wedge \p z]\vee[x\wedge \p y\wedge \p z]\text{ using $(*)$}\\
&=[\p x\wedge \p y\wedge z]\vee [x\wedge \p\p y\wedge z]\vee[\p x\wedge y\wedge \p z]\vee[x\wedge \p y\wedge \p z]\\
&\le[\p x\wedge \p y\wedge z]\vee[\p x\wedge y\wedge \p z]\vee[x\wedge \p y\wedge \p z]\vee[x\wedge y\wedge \p\p z]\vee[x{\wedge}\p\p y{\wedge}z]\\
&=[\p x\wedge \p y\wedge z]\vee[\p x\wedge y\wedge \p z]\vee[x\wedge((\p y\wedge \p z)\vee(y\wedge \p\p z)\vee(\p\p y\wedge z))]\\
&=[\p x\wedge((\p y\wedge z)\vee(y\wedge \p z))]\vee[x\wedge \p((\p y\wedge z)\vee(y\wedge \p z))]\text{ using $(**)$}\\
&=x\icdot (y\icdot z).
\end{align*}

(3) If $\cdot$ is idempotent, then $x=x\icdot x\le x\rcdot \top=\p x$ and $x\le \top\lcdot x=\q x$. Conversely,
if $\p,\q $ are inflationary then $x\icdot x=(\p x\wedge x)\vee(x\wedge \q y)=x\vee x=x$,
hence $\cdot$ is idempotent. For the second equivalence, if $\p\top=\top=\q \top$, then $\p,\q $ are inflationary since they satisfy $x\wedge \p\top\le \q x$ and $x\wedge \q \top\le \p x$. The reverse implication holds because $x\le \p x,\q x$ implies $\top\le \p\top,\q \top$.

(4) Assume $\cdot$ is idempotent and associative. Then $(\top\lcdot x)\icdot \top=\top\icdot (x\rcdot \top)$, hence
$\q \p x=\p\q x$. Furthermore, $\p\q x=\top\lcdot x\rcdot \top=\top\icdot x\icdot x\icdot \top=(\q x)\icdot (\p x)=(\p\q x\wedge \p x)\vee (\q x\wedge \q \p x)$. By (3) $\p,\q $ are inflationary, so $\p x\le \p\q x$ and $\q x\le \q \p x$. Therefore
$\p\q x=\p x\vee \q x$. Now we translate $(x\icdot y)\icdot \top=x\icdot (y\icdot \top)$ to obtain $\p(x\icdot y)=x\icdot (\p y)$, hence
\begin{align*}
&\p((\p x\wedge y)\vee (x\wedge \q y))
=(\p x\wedge \p y)\vee (x\wedge \q \p y)
=(\p x\wedge \p y)\vee (x\wedge (\p y\vee \q y))\\
&=(\p x\wedge \p y)\vee(x\wedge \p y)\vee(x\wedge \q y)
=(\p x\wedge \p y)\vee(x\wedge \q y)\text{ since $x\le \p x$ by (3).}
\end{align*}
The identity $\q ((\p x\wedge y)\vee (x\wedge \q y))=(\p x\wedge y)\vee (\q x\wedge \q y)$ has a similar proof.

Conversely, assume the two identities hold. Then using distributivity
\begin{align*}
(x\icdot y)\icdot z
&=[\p((\p x\wedge y)\vee(x\wedge \q y))\wedge z]\vee[((\p x\wedge y)\vee(x\wedge \q y))\wedge \q z]\\
&=[\p x\wedge \p y\wedge z]\vee[x\wedge \q y\wedge z]\vee[\p x\wedge y\wedge \q z]\vee[x\wedge \q y\wedge \q z]\\
&=[\p x\wedge \p y\wedge z]\vee[\p x\wedge y\wedge \q z]\vee[x\wedge \q y\wedge \q z]\text{ \ since $x\wedge \q y\wedge z\le x\wedge \q y\wedge \q z$}\\
&=[\p x\wedge \p y\wedge z]\vee[\p x\wedge y\wedge \q z]\vee[x\wedge \p y\wedge z]\vee[x\wedge \q y\wedge \q z]\\
&=[\p x\wedge((\p y\wedge z)\vee(y\wedge \q z))]\vee[x\wedge \q ((\p y\wedge z)\vee(y\wedge \q z))]=x\icdot (y\icdot z).
\end{align*}

(5) Assume $x$ has an identity $1$. Then $\p1=1\icdot \top=\top=\top\icdot  1=\q 1$ and $x=x\icdot 1=(\p x\wedge 1)\vee(x\wedge \q 1)=(\p x\wedge 1)\vee x$, so $\p x\wedge 1\le x$ and similarly $\q x\wedge 1\le x$. Therefore $(\p x\vee \q x)\wedge 1=(\p x\wedge 1)\vee(\q x\wedge 1)\le x$.

Conversely, suppose $\p1=\top=\q 1$ and $(\p x\vee \q x)\wedge 1\le x$. Then $x\icdot 1=(\p x\wedge 1)\vee(x\wedge \q 1)= (\p x\wedge 1)\vee x=x$ since $\p x\wedge 1\le x$. Likewise $1\icdot x=x$.

(6) This follows from (3) since $x=x\icdot 1\le x\rcdot \top=\p x$ and $x=1\icdot x\le \q x$.
\end{proof}

Note that if $\m A$ also has a bottom bound $\bot$, then $\p,\q $ are normal if and only if $\cdot$ is normal, hence the term-equivalence preserves normality.

This term-equivalence is useful since distributive lattices with unary operators are considerably simpler than distributive lattices
with binary operators. In particular, (2) and (4) show that associativity can be replaced by one or two 2-variable identities in this variety. This provides more efficient ways to construct associative operators from a (pair of) unary operator(s) on a distributive lattice.
The variety of $\top$-bounded distributive lattices is obtained as a subvariety of $d\ell \p\q $-algebras that satisfy $\p x=x=\q x$, or a subvariety of unary determined $d\ell$-magmas that
satisfy $x\cdot y=x\wedge y$.

For small cardinalities, Table~\ref{nofmagmas} shows the number of algebras that are unary-determined (shown in the even numbered rows) for several subvarieties of normal $d\ell$-magmas. As seen from rows 7-10, under the assumption of associativity, commutativity and idempotence of $\cdot$, the property of being unary-determined is a relatively mild restriction
compared to the general case of normal $d\ell$-magmas.

\begin{table}
\begin{center}
\begin{tabular}{|c|l|ccccccc|}\hline
&\hfill Cardinality $n=$ \ & 2& 3& 4& 5& 6& 7& 8\\\hline
1&normal $d\ell$-magmas& 2& 20& 1116 & & & &\\
2&normal $d\ell \p\q $-algebras& 2& 6& 46& 3435& & &\\\hline
3&normal comm. $d\ell$-magmas& 2& 10& 148& 3554&&&\\
4&normal $d\ell \p$-algebras& 2& 4& 15& 46& 183& 688& \\\hline
5&normal comm. $d\ell$-semigroups& 2& 8& 57& 392& 3212 &&\\
6&normal assoc. $d\ell \p$-algebras& 2& 4& 13& 35& 109& 315& 998 \\\hline
7&normal comm. idem. $d\ell$-semigroups&1& 2& 8& 25& 97& 366&\\
8&normal assoc. idem. $d\ell \p$-algebras& 1& 2& 7& 18& 57& 163& 521\\\hline
9&normal comm. idem. $d\ell$-monoids &1& 2& 6& 15& 44& 115& 326\\
10&normal assoc. idem. $d\ell \p1$-algebras& 1& 2& 5& 10& 24& 47& 108 \\\hline
11&distributive lattices& 1& 1& 2& 3& 5& 8& 15\\
\hline
\end{tabular}
\end{center}
\caption{The number of algebras of cardinality $n$ up to isomorphism.}\label{nofmagmas}
\end{table}
A \emph{Boolean magma} is a Boolean algebra with a binary operator. The next
lemma shows that if the operator is idempotent, then it is always unary-deter\-mined, hence the results in the current paper generalize the theorems about idempotent Boolean nonassociative quantales in \cite{AJ2020}.

\begin{lem}\label{BAud}
Every idempotent Boolean magma $(A,\wedge,\vee,\neg,\bot,\!\top,\cdot)$
is unary-de\-termined, i.e., satisfies $x\ncdot y=(x\rcdot \top\wedge y)\vee(x\wedge\top\lcdot y)$.
\end{lem}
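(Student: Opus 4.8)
The plan is to prove the identity purely equationally from idempotence together with the Boolean structure, avoiding any appeal to atoms or canonical extensions. Throughout write $\p x = x\cdot\top$ and $\q x = \top\cdot x$ as in Theorem~\ref{pqtermeq}, so that the goal becomes $x\cdot y=(\p x\wedge y)\vee(x\wedge\q y)$, and recall that a binary operator is order-preserving in each argument. The first move is to extract two consequences of $x\cdot x=x$. Expanding $x\vee y=(x\vee y)\cdot(x\vee y)=x\cdot x\vee x\cdot y\vee y\cdot x\vee y\cdot y=x\vee x\cdot y\vee y\cdot x\vee y$ shows $x\cdot y\le x\vee y$. Separately, since $x\wedge y\le x$ and $x\wedge y\le y$, monotonicity and idempotence give $x\wedge y=(x\wedge y)\cdot(x\wedge y)\le x\cdot y$. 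These are the order-theoretic analogues of the atomic conditions $R_{uv}\subseteq\{u,v\}$ and $R_{uu}=\{u\}$.

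For the inequality $x\cdot y\le(\p x\wedge y)\vee(x\wedge\q y)$ I would use $x\cdot y\le x\vee y$ and distributivity to split $x\cdot y=(x\cdot y)\wedge(x\vee y)=(x\cdot y\wedge x)\vee(x\cdot y\wedge y)$, then bound each conjunct by monotonicity: from $x\cdot y\le x\cdot\top=\p x$ we get $x\cdot y\wedge y\le\p x\wedge y$, and from $x\cdot y\le\top\cdot y=\q y$ we get $x\cdot y\wedge x\le x\wedge\q y$. Taking the join yields the desired bound.

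The reverse inequality is the crux, and it is here that complementation enters. It suffices to show $\p x\wedge y\le x\cdot y$, the companion $x\wedge\q y\le x\cdot y$ being symmetric. Writing $\top=y\vee\neg y$ gives $\p x=x\cdot y\vee x\cdot\neg y$, whence $\p x\wedge y=(x\cdot y\wedge y)\vee(x\cdot\neg y\wedge y)$. The first term is below $x\cdot y$. For the second, the idempotence bound $x\cdot\neg y\le x\vee\neg y$ together with $\neg y\wedge y=\bot$ yields $x\cdot\neg y\wedge y\le(x\vee\neg y)\wedge y=x\wedge y$, which is below $x\cdot y$ by the second consequence of idempotence. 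Hence $\p x\wedge y\le x\cdot y$, and combining the two directions gives the identity.

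I expect the main obstacle to be the second direction, specifically controlling the term $x\cdot\neg y\wedge y$: this is exactly where both idempotence-derived inequalities and the Boolean law $\neg y\wedge y=\bot$ must be used simultaneously. Distributivity alone does not suffice, which is consistent with the hypothesis that the ambient lattice is Boolean rather than merely distributive.
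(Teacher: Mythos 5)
Your proof is correct, and it coincides with the paper's argument on one inequality while taking a genuinely different route on the other. The direction $(\p x\wedge y)\vee(x\wedge \q y)\le x\cdot y$ is handled essentially as in the paper: expand $\p x = x\cdot(y\vee\neg y)$, then tame the troublesome term via $x\cdot\neg y\wedge y\le(x\vee\neg y)\wedge y=x\wedge y\le x\cdot y$, using exactly the two idempotence consequences $x\cdot y\le x\vee y$ and $x\wedge y\le x\cdot y$. Where you diverge is the opposite direction $x\cdot y\le(\p x\wedge y)\vee(x\wedge\q y)$: the paper proves it with Boolean negation, rewriting it as $x\cdot y\wedge\neg(x\rcdot\top\wedge y)\le x\wedge\top\lcdot y$ and applying De Morgan, so complementation appears in both halves of the paper's proof. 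You instead split $x\cdot y=(x\cdot y\wedge x)\vee(x\cdot y\wedge y)$ using $x\cdot y\le x\vee y$ and distributivity, then finish by monotonicity ($x\cdot y\le\p x$ and $x\cdot y\le\q y$). This buys a sharper localization of where the Boolean hypothesis is used: your argument shows that the inequality $x\cdot y\le(x\rcdot\top\wedge y)\vee(x\wedge\top\lcdot y)$ holds in every $\top$-bounded idempotent $d\ell$-magma with no complementation whatsoever, and that negation is needed only for the reverse inequality, precisely in controlling $x\cdot\neg y\wedge y$ --- which is what your closing remark asserts. The paper's negation trick is marginally shorter for its half, but it obscures this asymmetry; your version makes it explicit and is, in that sense, slightly more general.
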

\begin{proof}
Idempotence is equivalent to $x\wedge y\le x\ncdot y\le x\vee y$ since
$(x\wedge y)\ncdot (x\wedge y)\le x\ncdot y\le (x\vee y)\ncdot (x\vee y)$ holds in all partially ordered algebras
where $\cdot$ is an order-preserving binary operation. The following calculation
\begin{align*}
x\rcdot \top\wedge y&=x\ncdot (y\vee\neg y)\wedge y=(x\ncdot y\wedge y)\vee (x\ncdot \neg y\wedge y)\\
&\le x\ncdot y\vee ((x\vee\neg y)\wedge y)=x\ncdot y\vee (x\wedge y)\vee(\neg y\wedge y)=x\ncdot y
\end{align*}
and a similar one for $x\wedge\top\lcdot y\le x\ncdot y$ prove that $x\ncdot y\ge (x\rcdot \top\wedge y)\vee(x\wedge\top\lcdot y)$.

Using Boolean negation, the opposite inequality is equivalent to
\[x\ncdot y\wedge\neg(x\rcdot \top\wedge y)\le x\wedge \top\lcdot y.\]
By De Morgan's law it suffices to show $(x\ncdot y\wedge\neg(x\rcdot \top))\vee(x\ncdot y\wedge \neg y)\le x\wedge \top\lcdot y$. Since $x\ncdot y\le x\rcdot \top$, the first meet disappears. Next, by idempotence, $x\ncdot y\wedge\neg y\le (x\vee y)\wedge\neg y=(x\wedge\neg y)\vee (y\wedge\neg y)\le x$ and finally $x\ncdot y\wedge\neg y\le x\ncdot y\le\top\lcdot y$.
\end{proof}

\section{BI-algebras from Heyting algebras and residuated unary operations}

We now recall some basic definitions about residuated operations, adjoints and residuated lattices. For an overview and additional details we refer to \cite{GJKO2007}.
A \emph{Brouwerian algebra} $(A,\wedge,\vee,\to,\top)$ is a $\top$-bounded lattice
such that $\to$ is the \emph{residual} of $\wedge$, i.e.,
\[
x\wedge y\le z\quad\iff\quad y\le x\to z.
\]
Since $\to$ is the residual of $\wedge$, we have that $\wedge$ is join-preserving, so
the lattice is distributive \cite[Lem. 4.1]{GJKO2007}. The $\top$-bound is included as a constant since it always exists
when a meet-operation has a residual: $x\wedge y\le x$ always holds, hence $y\le (x\to x)=\top$.
A \emph{Heyting algebra} is a bounded Brouwerian algebra with a constant $\bot$ denoting the bottom element.

A \emph{dual operator} is an $n$-ary operation on a lattice that preserves meets in each argument.
A \emph{residual} or \emph{upper adjoint} of a unary operation $\p$ on a poset $\m A=(A,\le)$ is a unary operation $\p^*$ such that
\[
\p x\le y\iff x\le \p^*y
\]
for all $x,y\in A$. If $\m A$ is a lattice, then the existence of a residual guarantees that $\p$ is an operator and $\p^*$ is a dual operator \cite[Lem. 3.5]{GJKO2007}. Moreover, if $\m A$ is bounded, then $\p\bot=\bot$ and
$\p^*\top=\top$.

A binary operation $\cdot$ on a poset is \emph{residuated} if there exist a \emph{left residual} $\backslash$ and a \emph{right residual} $/$ such that
\[
x\cdot y\le z\iff y\le x\backslash z\iff x\le z/y.
\]
A \emph{residuated $\ell$-magma} $(A,\wedge,\vee,\cdot,\backslash,/)$ is a lattice with a residuated binary operation. In this case $\cdot$ is an operator and $\backslash, /$ are dual
operators in the ``numerator'' argument. The ``denominator'' arguments of $\backslash, /$ map joins to meets, hence they are order reversing. A \emph{residuated Brouwerian-magma} is a residuated $\ell$-magma expanded with $\to,\top$ such that $(A,\wedge,\vee,\to,\top)$ is a Brouwerian algebra.

A \emph{residuated lattice} is a residuated $\ell$-magma with $\cdot$ associative and a constant $1$ that is an identity element, i.e., $(A,\cdot,1)$ is a monoid. A \emph{generalized bunched implication algebra}, or GBI-algebra, $\m A=(A,\wedge,\vee,\to,\top,\cdot,1,\backslash,/)$  is
a $\top$-bounded residuated lattice with a residual $\to$ for the meet operation, i.e., $(A,\wedge,\vee,\to,\top)$ is a Brouwerian algebra. A GBI-algebra is called a \emph{bunched implication algebra} (BI-algebra) if $\cdot$ is commutative and $\m A$ also has a bottom element, denoted by the constant $\bot$, hence a BI-algebra has a Heyting algebra reduct. These algebras are the algebraic semantics for
bunched implication logic, which is the propositional part of separation logic, a Hoare logic used for reasoning about memory references in computer programs. In this setting the operation $\cdot$ is usually denoted by $*$, the left residual $\backslash$ is denoted $\bimp$, and $/$ can be omitted since $x/y=y\bimp x$.

Note that the property of being a residual can be expressed by inequalities ($\p^*$ is a residual of $\p$ if and only if $\p(\p^*x)\le x\le \p^*(\p x)$ for all $x$, and $\p,\p^*$ are order preserving), hence the classes of all Brouwerian algebras, Heyting algebras, residuated $\ell$-magmas, residuated Brouwerian-magmas, residuated lattices, (G)BI-algebras, and pairs of residuated unary maps on a lattice are varieties (see e.g.~\cite[Thm.~2.7 and Lem.~3.2]{GJKO2007}). Recall also that a $\top$-bounded magma is unary-determined if it satisfies the identity $x\icdot y=(x\rcdot \top\wedge y)\vee(x\wedge \top\lcdot y)$.

We are now ready to prove a result that upgrades the term-equivalence of Theorem~\ref{pqtermeq} to Brouwerian algebras with two pairs of residuated maps and unary-determined residuated Brouwerian-magmas.

\begin{thm} \hfill
\begin{enumerate}[label=\textup{(\arabic*)}]
\item Let $(A,\wedge,\vee,\to,\top,\p,\p^*,\q ,\q ^*)$ be a Brouwerian algebra with unary operators $\p,\q $ and their residuals $\p^*,\q ^*$ such that $x\wedge \p\top\le \q x$, $x\wedge \q \top\le \p x$. If we define
$x\ncdot y=(\p x\wedge y)\vee(x\wedge \q y)$,
\[
x\backslash y=(\p x\to y)\wedge \q ^*(x\to y)\quad\text{and}\quad
x/y=\p^*(y\to x)\wedge (\q y\to x),
\]
then $(A,\wedge,\vee,\top,\cdot,\backslash,/)$ is a unary-determined residuated Brouwerian-magma  
and the unary operations are recovered by $\p x=x\rcdot \top$, $\p^*x=x/\top$, $\q x=\top\lcdot x$ and $\q ^*x=\top\backslash x$.

\item Let $(A,\wedge,\vee,\to,\top,\cdot,\backslash,/)$ be a unary-determined
residuated Brouwerian-mag\-ma and define $\p x=x\rcdot \top$, $\p^*x=x/\top$,
$\q x=\top\lcdot x$ and $\q ^*x=\top\backslash x$. Then $(A,\wedge,\vee,\to,$ 
$\top,\p,\p^*,\q ,\q ^*)$ is a Brouwerian algebra with a unary operators
$\p,\q $ and dual operators $\p^*,\q ^*$ that satisfies $x\wedge \p\top\le \q x$, 
$x\wedge \q \top\le \p x$.
\end{enumerate}
\end{thm}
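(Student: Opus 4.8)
The plan is to notice that almost all of the claimed structure is already available, so that the only genuinely new content is the residuation. For Part~(1), since $\p,\q$ are unary operators on the $\top$-bounded distributive lattice $(A,\wedge,\vee,\top)$ satisfying the two inequalities, Theorem~\ref{pqtermeq}(1) immediately gives that $(A,\wedge,\vee,\top,\cdot)$ is a unary-determined $d\ell$-magma with $\p x=x\rcdot\top$ and $\q x=\top\lcdot x$; in particular $\cdot$ preserves joins in each argument. Since $(A,\wedge,\vee,\to,\top)$ is a Brouwerian algebra by hypothesis, it only remains to check that the stated $\backslash$ and $/$ are genuine left and right residuals of $\cdot$. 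Once that is done, the fact that they are automatically dual operators in the numerator and order-reversing in the denominator follows from the general residuation theory quoted before the theorem, so no separate verification of those properties is needed.

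First I would verify the left residual law. Using that a join lies below $z$ iff both joinands do, and then applying the Brouwerian residual $\to$ and the residual $\q^*$ of $\q$ in turn, the verification reads
\begin{align*}
x\cdot y\le z
&\iff \p x\wedge y\le z \ \text{ and }\ x\wedge\q y\le z\\
&\iff y\le \p x\to z \ \text{ and }\ \q y\le x\to z\\
&\iff y\le \p x\to z \ \text{ and }\ y\le \q^*(x\to z)\\
&\iff y\le(\p x\to z)\wedge\q^*(x\to z)=x\backslash z.
\end{align*}
The right residual law is entirely symmetric: the same decomposition together with the residual $\p^*$ of $\p$ gives $x\cdot y\le z\iff x\le\p^*(y\to z)\wedge(\q y\to z)=z/y$. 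Hence $\cdot$ is residuated and $(A,\wedge,\vee,\top,\cdot,\backslash,/)$ is a unary-determined residuated Brouwerian-magma.

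It then remains to recover $\p^*$ and $\q^*$. Rather than computing $x/\top$ and $\top\backslash x$ by hand, I would exploit uniqueness of residuals: specializing the two laws just proved to $y=\top$ and to $x=\top$ gives $\p x\le z\iff x\le z/\top$ and $\q x\le z\iff x\le\top\backslash z$, so $z\mapsto z/\top$ and $z\mapsto\top\backslash z$ are the residuals of $\p$ and $\q$, whence $\p^*z=z/\top$ and $\q^*z=\top\backslash z$. (A direct check also works, using $\top\to x=x$ and substituting $\p^*x$ into $x\wedge\q\top\le\p x$ to kill the extra conjunct.) For Part~(2), a unary-determined residuated Brouwerian-magma has a unary-determined $d\ell$-magma reduct, so Theorem~\ref{pqtermeq}(2) supplies that $\p,\q$ are operators satisfying the two inequalities and that $\cdot$ is recovered from them; and the residual laws of $\cdot$, read off again at $y=\top$ and $x=\top$, show that $\p^*x=x/\top$ and $\q^*x=\top\backslash x$ are residuals of $\p$ and $\q$, hence dual operators, which is exactly what is asserted.

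I expect the main obstacle to be purely bookkeeping: getting the order of the two adjunctions right in the middle step of the residual chain (first the Brouwerian $\to$, then $\q^*$ or $\p^*$), and matching the numerator/denominator roles of $\to$ in the definitions of $\backslash$ and $/$ correctly. No step is deep; the real content is simply that residuation of the binary operation factors through the Brouwerian residual together with the residuals of the two generating unary operators.
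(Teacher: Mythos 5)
Your proof is correct and follows essentially the same route as the paper: the same chain of equivalences (splitting the join, then applying $\to$ and $\q^*$ or $\p^*$) establishes residuation, Theorem~\ref{pqtermeq} supplies the unary-determined $d\ell$-magma structure and the recovery of $\p,\q$, and $\p^*,\q^*$ are recovered by specializing the residual laws at $\top$ exactly as in the paper's argument. The only cosmetic difference is that you invoke Theorem~\ref{pqtermeq} before verifying residuation rather than after, which changes nothing of substance.
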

 
\begin{proof} (1) The following calculation shows that $\cdot$ is residuated.
\begin{alignat*}{2}
x\cdot y\le z
&\iff (\p x\wedge y)\vee(x\wedge \q y)\le z
&&\iff \p x\wedge y\le z\text{ and }x\wedge \q y\le z\\
&\iff y\le \p x\to z\text{ and } y\le \q ^*(x\to z)
&&\iff y\le (\p x\to z)\wedge \q ^*(x\to z)
\end{alignat*}
hence $x\backslash z=(\p x\to z)\wedge \q ^*(x\to z)$ and similarly $z/y=\p^*(y\to z)\wedge (\q y\to z)$. By Theorem~\ref{pqtermeq} it follows that $\p x=x\rcdot \top, \q x=\top\lcdot x$ and $x\icdot y=(x\rcdot \top\wedge y)\vee(x\wedge\top\lcdot y)$. Since $x\rcdot \top \le y\iff x\le y/\top$ we obtain $\p^*(x)=x/\top$, and similarly $\q ^*(x)=\top\backslash x$.

(2) Since $\cdot$ is residuated it follows that $\p^*$ and $\q ^*$ are the unary residuals of $\p$, $\q $ respectively. The remaining parts hold by Theorem~\ref{pqtermeq}.
\end{proof}

Recall that a closure operator $\p$ is an order-preserving unary function on a poset such that $x\le \p x=\p\p x$. A bounded $d\ell \p$-algebra where $\p$ is a normal closure operator is called a $d\ell \p$-closure algebra.
If $\cdot$ is idempotent and associative then $x\rcdot \top=x\ncdot(\top\ncdot\top)=(x\ncdot\top)\ncdot\top$, so $\p x=x\rcdot \top$ is a closure operator.

\begin{lem}\label{five} Assume $\m A$ is a $d\ell \p$-closure algebra and let $x\ncdot  y=(\p x\wedge y)\vee(x\wedge \p y)$.
Then $\cdot$ is associative if and only if
$\p x\wedge \p y\le \p((\p x\wedge y)\vee(x\vee \p y))$.
\end{lem}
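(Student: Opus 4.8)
The plan is to invoke the associativity criterion already recorded in Lemma~\ref{pqprops}(2) and then exploit the closure properties of $\p$ to collapse its two-sided equation to the single inequality in the statement. Since $\m A$ is a $d\ell\p$-closure algebra we have $\p=\q$, so by Lemma~\ref{pqprops}(1) the operation $\cdot$ is commutative, and because $\p$ is a closure operator it is in particular inflationary, so Lemma~\ref{pqprops}(3) shows $\cdot$ is idempotent. Reading the argument of the outer $\p$ in the statement as $x\icdot y=(\p x\wedge y)\vee(x\wedge\p y)$, Lemma~\ref{pqprops}(2) says $\cdot$ is associative if and only if the identity
$$
\p((\p x\wedge y)\vee(x\wedge\p y))=(\p x\wedge\p y)\vee(x\wedge\p\p y)
$$
holds.

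First I would simplify the right-hand side using that $\p$ is a closure operator: $\p\p y=\p y$ and $x\le\p x$ give $x\wedge\p\p y=x\wedge\p y\le\p x\wedge\p y$, so the right-hand side equals $\p x\wedge\p y$. Hence associativity is equivalent to the identity $\p(x\icdot y)=\p x\wedge\p y$.

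The key step, which carries the real content, is to observe that the inequality $\p(x\icdot y)\le\p x\wedge\p y$ holds in every $d\ell\p$-closure algebra, so that only the reverse inequality remains to be assumed. For this I would first check that $\p x\wedge\p y$ is a closed element: from $\p x\wedge\p y\le\p x$ together with order-preservation and idempotence of $\p$ we get $\p(\p x\wedge\p y)\le\p\p x=\p x$, and symmetrically $\p(\p x\wedge\p y)\le\p y$, whence $\p(\p x\wedge\p y)\le\p x\wedge\p y$; the reverse is inflationarity, so $\p(\p x\wedge\p y)=\p x\wedge\p y$. Since $y\le\p y$ and $x\le\p x$ give $\p x\wedge y\le\p x\wedge\p y$ and $x\wedge\p y\le\p x\wedge\p y$, the join $x\icdot y$ lies below $\p x\wedge\p y$; applying the order-preserving map $\p$ and using that $\p x\wedge\p y$ is closed yields $\p(x\icdot y)\le\p(\p x\wedge\p y)=\p x\wedge\p y$.

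With this automatic inequality in hand, the identity $\p(x\icdot y)=\p x\wedge\p y$ is equivalent to the single inequality $\p x\wedge\p y\le\p(x\icdot y)=\p((\p x\wedge y)\vee(x\wedge\p y))$, which is exactly the asserted condition; combined with the equivalence from Lemma~\ref{pqprops}(2) this proves the lemma. I expect the lone obstacle to be recognizing that $\p x\wedge\p y$ is $\p$-closed---once that is noted the reverse inequality is free and the biconditional follows immediately.
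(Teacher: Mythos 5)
Your proof is correct and follows essentially the same route as the paper: both reduce to Lemma~\ref{pqprops}(2), use idempotence and inflationarity of $\p$ to collapse the right-hand side to $\p x\wedge \p y$, and then observe that the inequality $\p((\p x\wedge y)\vee(x\wedge \p y))\le \p x\wedge \p y$ is automatic, leaving only the stated inequality. The only cosmetic difference is that the paper gets the automatic inequality from join-preservation of $\p$ applied to each disjunct (e.g.\ $\p(\p x\wedge y)\le \p\p x\wedge\p y=\p x\wedge\p y$), whereas you get it by noting $\p x\wedge\p y$ is a closed element and applying monotonicity; these are interchangeable one-line arguments.
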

\begin{proof}
By Lemma~\ref{pqprops} $\cdot$ is associative if and only if
the identity $\p((\p x\wedge y)\vee(x\vee \p y))=(\p x\wedge \p y)\vee(x\wedge \p y)$ holds.
This is equivalent to $\p x\wedge \p y\le \p((\p x\wedge y)\vee(x\vee \p y))$
since $x\wedge \p y\le \p x\wedge \p y$, $\p(\p x\wedge y)\le \p\p x\wedge \p y=\p x\wedge \p y$
and similarly $\p(x\wedge \p y)\le \p x\wedge \p y$.
\end{proof}
We note that there exist non-associative $d\ell\p$-algebras, as shown (later) by the algebra $\m D_{12}$ in Figure~\ref{fig:sidlp}.
The preceding theorems specialize to a term-equivalence for a subvariety of unary-determined BI-algebras as follows:
\begin{cor}\label{BItermeq} \hfill
\begin{enumerate}
\item Let $(A,\wedge,\vee,\to,\top,\bot,\p,\p^*,1)$ be a Heyting algebra with a closure operator $\p$, residual $\p^*$ and constant $1$ such that $\p x\wedge \p y\le \p((\p x\wedge y)\vee(x\wedge \p y))$, $\p1=\top$ and $\p x\wedge 1\le x$.
If we define
$x\nstar y=(\p x\wedge y)\vee(x\wedge \p y)$ and $x\bimp y=(\p x\to y)\wedge \p^*(x\wedge y)$
then $(A,\wedge,\vee,\top,\to,*,\bimp,1)$ is a unary-determined BI-algebra and $(x\nstar \!\top)\wedge (y\nstar \!\top)\le(((x\nstar \!\top)\wedge y)\vee(x\wedge (y\nstar \!\top)))\nstar \!\top$ holds.

\item Let $(A,\wedge,\vee,\to,\top,\bot,*,\bimp,1)$ be a unary-determined BI-algebra, 
and define $\p x=x\nstar \!\top$ and $\p^*x=\top\bimp x$.
Then $(A,\wedge,\vee,\to,$ $\top,\bot,\p,\p^*,1)$ is a Heyting algebra with a closure operator
$\p$ that has $\p^*$ as residual and satisfies $\p x\wedge \p y\le \p((\p x\wedge y)\vee(x\wedge \p y))$,
$\p1=\top$ and $\p x\wedge 1\le x$.
\end{enumerate}
\end{cor}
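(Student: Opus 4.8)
The plan is to deduce Corollary~\ref{BItermeq} by specializing the two theorems already established: the term-equivalence of Theorem~\ref{pqtermeq}, the residuation upgrade in the unnamed theorem of Section~3, and the associativity criteria of Lemma~\ref{pqprops} and Lemma~\ref{five}. Since a BI-algebra is a GBI-algebra with $\cdot$ commutative plus a bottom element, and commutativity of $\cdot$ is exactly the condition $\p=\q$ by Lemma~\ref{pqprops}(1), the natural strategy is to take the $d\ell\p\q$-setting and collapse $\q$ to $\p$ (and $\q^*$ to $\p^*$) throughout. First I would verify that the data in each direction matches: in (1) we are given a single closure operator $\p$ with residual $\p^*$, so setting $\q=\p$ and $\q^*=\p^*$ the residuation theorem yields that $x\nstar y=(\p x\wedge y)\vee(x\wedge\p y)$ is residuated with left residual $x\bimp y=(\p x\to y)\wedge\p^*(x\to y)$, which under commutativity is the stated $\bimp$; I would confirm the given formula $x\bimp y=(\p x\to y)\wedge\p^*(x\wedge y)$ agrees with the theorem's $\q^*(x\to y)$ specialized to $\p^*(x\to y)$.

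**Next I would handle the structural properties in order.** The Heyting-algebra reduct and the residuation are immediate from the Section~3 theorem. For the monoid structure I invoke Lemma~\ref{pqprops}(5): the hypotheses $\p 1=\top$ and $\p x\wedge 1\le x$ (with $\q=\p$, so $\q 1=\top$ and $(\p x\vee\q x)\wedge 1=\p x\wedge 1\le x$) give that $\cdot$ has identity $1$, and by Lemma~\ref{pqprops}(6) it is idempotent, hence by Lemma~\ref{pqprops}(3) $\p$ is inflationary—consistent with $\p$ being a closure operator. For associativity I would cite Lemma~\ref{five}: since $\m A$ is a $d\ell\p$-closure algebra, the hypothesis $\p x\wedge\p y\le\p((\p x\wedge y)\vee(x\wedge\p y))$ is exactly the criterion making $\cdot$ associative. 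Commutativity is Lemma~\ref{pqprops}(1) with $\p=\q$. Together these establish that $(A,\wedge,\vee,\top,\to,*,\bimp,1)$ is a unary-determined BI-algebra. The final displayed inequality in (1) is just the associativity condition re-expressed in terms of $\cdot$ rather than $\p$, using $\p x=x\nstar\top$ from the term-equivalence, so I would note it follows by substituting $\p x=x\nstar\top$ into the assumed inequality.

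**For direction (2)** the argument runs in reverse and is mostly a matter of reading off consequences. Given a unary-determined BI-algebra, I set $\p x=x\nstar\top$; by Theorem~\ref{pqtermeq}(2) this is the operator recovered from $\cdot$, and commutativity forces $\q=\p$, so the single operator suffices. Because $\cdot$ is associative and idempotent (idempotence from having an identity, via the BI/Lemma~\ref{pqprops}(6) reasoning), $\p x=x\nstar\top$ is a closure operator as noted in the remark preceding Lemma~\ref{five}. The residuation theorem gives that $\p^*x=\top\bimp x$ is the residual of $\p$, and the associativity condition translates back via Lemma~\ref{five} to $\p x\wedge\p y\le\p((\p x\wedge y)\vee(x\wedge\p y))$; the identity conditions $\p 1=\top$ and $\p x\wedge 1\le x$ come from Lemma~\ref{pqprops}(5).

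**The main obstacle** I anticipate is purely bookkeeping rather than conceptual: reconciling the exact form of $\bimp$ stated in the corollary, $(\p x\to y)\wedge\p^*(x\wedge y)$, with the form delivered by the Section~3 theorem, $(\p x\to y)\wedge\p^*(x\to y)$. These differ in the argument of $\p^*$ ($x\wedge y$ versus $x\to y$), so I would need to check that under the present hypotheses—where $\p$ is a \emph{closure} operator and the algebra is a genuine Heyting algebra with $\to$—these two expressions coincide, or that one is a typographical variant; verifying this equality (likely using $\p$ inflationary together with properties of $\to$) is the one step requiring genuine care, and I would treat it explicitly rather than folding it into ``routine'' specialization.
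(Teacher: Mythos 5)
Your proposal is correct and follows exactly the route the paper intends: Corollary~\ref{BItermeq} is given there without a separate proof, precisely as a specialization of Theorem~\ref{pqtermeq}, the residuation theorem of Section~3 (with $\q=\p$, $\q^*=\p^*$), Lemma~\ref{pqprops}(1),(5),(6) and Lemma~\ref{five}, which is your argument step for step. On the one point you flagged: your caution is justified, but the resolution is that the corollary's displayed formula contains a typographical error rather than an equality you could establish. The Section~3 theorem yields $x\bimp y=(\p x\to y)\wedge\p^*(x\to y)$, and this is \emph{not} equal to $(\p x\to y)\wedge\p^*(x\wedge y)$ under the corollary's hypotheses: in the two-element Boolean algebra with $\p=\p^*=\mathrm{id}$ and $1=\top$ (all hypotheses hold and $x\nstar y=x\wedge y$), the first formula gives $\bot\bimp\bot=\top$ while the second gives $\bot$. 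So you should conclude that $\p^*(x\wedge y)$ should read $\p^*(x\to y)$, after which the rest of your argument goes through unchanged.
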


Heyting algebras with a closure operator provide algebraic semantics for \textbf{IntS4}$_\Diamond$ \cite{Dos1985}, an intuitionistic modal logic with an $S4$-modality. Hence the result above establishes a connection between certain extensions of bunched implication logic and of intuitionistic modal logic.

By Lemma~\ref{pqprops}(6) unary-determined BI-algebras satisfy $x\nstar x=x$, which does not hold in BI-algebras that model applications (e.g., heap storage). However, as mentioned in the introduction, they are members of the variety of BI-algebras, and understanding their properties via this term-equivalence is useful for the general theory. E.g., structural results about algebraic objects (such as rings) often start by investigating the idempotent algebras, followed by sets of idempotent elements in more general algebras. 
Line~10 in Table~\ref{nofmagmas} also shows that finite unary-determined BI-algebras are not rare (algebras with normal join-preserving operators can be uniquely expanded with residuals in the finite case, hence expansions of the algebras counted in Line~10 are indeed term-equivalent to unary-determined BI-algebras).

\section{Relational semantics for \texorpdfstring{$d\ell$}{dl}-magmas}

We now briefly recall relational semantics for bounded distributive lattices with
operators and then apply correspondence theory to derive first-order
conditions for the equational properties of the preceding sections.

An element in a lattice is \emph{completely join-irreducible} if it is not the supremum
of all the elements strictly below it. The set of all completely join-irreducible elements of a lattice $A$ is denoted by $J(A)$, and it is partially ordered by restricting the order of $A$ to $J(A)$.
For example, if $A$ is a Boolean lattice, then $J(A)=At(A)$ is the antichain of \emph{atoms}, i.e., all elements immediately above the bottom element. The set $M(A)$ of completely meet-irreducible elements is defined dually. A lattice is \emph{perfect} if it is complete (i.e., all joins and meets exist) and every element is a join of completely join-irreducibles and a meet of completely meet-irreducibles. For a Boolean algebra, the notion of perfect is equivalent to being \emph{complete} (i.e., joins and meets of all subsets exist) and \emph{atomic} (i.e., every non-bottom element has an atom below it).

Recall that for a poset $\m W=(W,\le)$, a \emph{downset} is a subset $X$ such that $y\le x\in X$ implies $y\in X$. As in modal logic, $W$ is considered a set of ``worlds'' or states. We let $D(\m W)$ be the set of all downsets of $\m W$, and $(D(\m W),\cap,\cup)$ the \emph{lattice of downsets}. The collection $D(\m W)$
is a perfect distributive lattice with infinitary meet and join given by (arbitrary) intersections and unions. The following result, due to Birkhoff \cite[Thm. III.3.3]{Bir1967} for lattices of finite height, shows that up to isomorphism all perfect distributive lattices arise in this way. The poset $J(D(\m W))$ contains exactly the principal downsets $\da x=\{y\in W\mid y\le x\}$.

\begin{thmC}[{\cite[10.29]{DP2002}}] For a lattice $A$ the following are equivalent:
\begin{enumerate}
\item $A$ is distributive and  perfect.
\item $A$ is isomorphic to the lattice of downsets of a partial order.
\end{enumerate}
\end{thmC}

Note that the set of upsets of a poset is also a perfect distributive lattice,
and if it is ordered by reverse inclusion then this lattice is isomorphic to 
the downset lattice described above. 
It is also well known that the maps $J$ and $D$
are functors for a categorial duality between the category of posets with order-preserving maps and the category of perfect distributive lattices
with complete lattice homomorphisms (i.e., maps that preserve arbitrary joins and meets).

A \emph{complete operator} on a complete lattice is an operation that, in each argument, is completely join-preserving, while a \emph{complete dual operator} is completely meet-preserving (in each argument). A lattice-ordered algebra is called \emph{perfect} if its lattice reduct is perfect and every fundamental operation on it is a complete operator or dual operator.
The duality between the category of perfect distributive lattices and posets extends to the category of perfect distributive lattices with (a fixed signature of) complete operators and dual operators. The corresponding poset category has additional relations of arity $n+1$ for each (dual) operator of arity $n$, and the relations have to be upward or downward closed in each argument. For example, a binary relation $Q\subseteq W^2$ is upward closed in the second argument if $xQy\le z\implies xQz$. Here $xQy\le z$ is an abbreviation for $xQy$ and $y\le z$.

Perfect distributive lattices with operators, their residuals and dual operators are algebraic models for many
logics, including relevance logic, intuitionistic logic, H\'ajek's basic logic, \L ukasiewicz
logic and bunched implication logic \cite{GNV2005,GJKO2007}.
In such an algebra $\m A$, a join-preserving binary operation is determined by
a ternary relation $R$ on $J(\m A)$ given by
\[
xRyz\iff x\le yz.
\]
The notation $xRyz$ is shorthand for $(x,y,z)\in R$.
For $b,c\in A$ the product $bc$ is recovered as $\bigvee\{x\in J(\m A)\mid xRyz$ for some $y\le b$ and $z\le c\}$.

The relational structure $(J(\m A),\le,R)$ is an example of a Birkhoff frame. In general, a \emph{Birkhoff frame} \cite{GJ2020} is a triple $\m W=(W,\le,R)$ where $(W,\le)$ is a poset, and $R\subseteq W^3$ satisfies the following three properties (downward closure in the 1st, and upward closure in the 2nd and 3rd argument):
\begin{center}
\begin{tabular}{l}
	$\text{(R1)}\ u\le xRyz\implies uRyz$\quad\\
	$\text{(R2)}\ xRyz~\&~y\leq v\implies xRvz$\quad\\
	$\text{(R3)}\ xRyz~\&~z\leq w\implies xRyw$.
\end{tabular}
\end{center}
A Birkhoff frame $\m W$ defines the downset algebra $\m D(\m W)=(D(\m W),\bigcap,\bigcup,\cdot)$ by
\[
Y\cdot Z=\{x\in W\mid xRyz\text{ for some $y\in Y$ and $z\in Z$}\}.
\]
The  property (R1) ensures that $Y\cdot Z\in D(\m W)$.

In relevance logic \cite{DR2002} similar ternary frames are known as Routley-Meyer frames. In that setting upsets are used to recover the distributive lattice-ordered relevance algebra, and this choice implies that $J(A)$ with the induced order from $A$ is dually isomorphic to $(W,\le)$. Another difference is that Routley-Meyer frames have a unary relation and axioms to ensure it is a left identity element of the $\cdot$ operation.

The duality between perfect $d\ell$-magmas and Birkhoff frames is recalled below. Here we assume that
the binary operation on a complete $d\ell$-magma is a complete operator, i.e., distributes over arbitrary joins in each argument. Such algebras are also known as \emph{nonassociative quantales} or \emph{prequantales} \cite{Ros1990}.

\begin{thmC}[\cite{GJ2020}]
\begin{enumerate}
\item If $\m A$ is a perfect $d\ell$-magma and $R\subseteq J(A)^3$ is defined by $xRyz\Leftrightarrow x\le yz$
 then $J(\m A)=(J(A),\le,R)$ is a Birkhoff frame, and $\m A\cong \m D(J(\m A))$.

\item If $\m W$ is a Birkhoff frame then $\m D(\m W)$ is a perfect $d\ell$-magma, and $\m W\cong (J(D(\m W)),\subseteq,R_{\da})$, where $(\da x,\da y,\da z)\in R_{\da}\Leftrightarrow xRyz$.
\end{enumerate}
\end{thmC}

A ternary relation $R$ is called \emph{commutative} if $xRyz\implies xRzy$ for all $x,y,z$. The justification for this terminology is provided by the following result.

\begin{lem}
For any Birkhoff frame $\m{W}$, $\m D(\m{W})$ is commutative if and only if $R$ is commutative.
\end{lem}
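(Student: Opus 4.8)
The plan is to prove the two implications separately, handling the forward direction (commutativity of $R$ implies commutativity of $\m D(\m W)$) by directly unwinding the definition of the product, and the converse by a correspondence argument that tests the identity $Y\cdot Z=Z\cdot Y$ on principal downsets and then transfers the resulting witnesses back along the upward-closure axioms (R2) and (R3).

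For the forward direction I would assume $xRyz\implies xRzy$ for all $x,y,z$ and fix arbitrary downsets $Y,Z\in D(\m W)$. By definition $x\in Y\cdot Z$ iff $xRyz$ for some $y\in Y$ and $z\in Z$; applying commutativity of $R$ gives $xRzy$ with $z\in Z$ and $y\in Y$, which is exactly the condition for $x\in Z\cdot Y$. The reverse containment is symmetric, so $Y\cdot Z=Z\cdot Y$ and $\m D(\m W)$ is commutative. This step is routine.

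For the converse I would assume $\m D(\m W)$ is commutative and show $xRyz\implies xRzy$ for arbitrary $x,y,z\in W$. The key idea is to feed the principal downsets $\da y$ and $\da z$ into the commutativity identity. From $xRyz$ together with $y\in\da y$ and $z\in\da z$ we obtain $x\in\da y\cdot\da z$, and commutativity yields $x\in\da z\cdot\da y$. Unwinding the latter produces witnesses $z'\le z$ and $y'\le y$ with $xRz'y'$. Applying (R2) to $z'\le z$ upgrades $xRz'y'$ to $xRzy'$, and then applying (R3) to $y'\le y$ upgrades this to $xRzy$, as required.

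The main obstacle is precisely the step just described in the converse: the product on downsets only records the existence of \emph{some} witnesses below the chosen elements, so the naive reading of $x\in\da z\cdot\da y$ gives $xRz'y'$ rather than $xRzy$. The monotonicity built into a Birkhoff frame via (R2) and (R3) is exactly what closes this gap; note that the downward-closure axiom (R1) plays no role here, since it is needed only to guarantee that $Y\cdot Z$ is itself a downset.
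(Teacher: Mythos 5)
Your proof is correct. The paper actually states this lemma without any proof (treating it as routine), and your argument is exactly the standard one---indeed it is the same technique the paper uses for the adjacent lemma on idempotence: test the identity on principal downsets $\da y$, $\da z$ and then upgrade the resulting witnesses $xRz'y'$ with $z'\le z$, $y'\le y$ to $xRzy$ via the upward-closure axioms (R2) and (R3).
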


\begin{lem}\label{idem}
	Let $\m{W}$ be a Birkhoff frame. Then $\m D(\m{W})$ is idempotent if and only if
	$xRxx$ and $(xRyz \implies x\leq y \text{ or }x\leq z)$ for all $x,y,z\in W$.
\end{lem}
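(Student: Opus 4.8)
The plan is to split the idempotence identity $Y\ncdot Y=Y$ (required for all downsets $Y$) into its two inclusions and to show that each inclusion corresponds to exactly one of the two frame conditions. Throughout I will use that $Y\ncdot Y=\{x\in W\mid xRyz\text{ for some }y,z\in Y\}$, that the principal downsets $\da x$ are the completely join-irreducible elements of $\m D(\m W)$, and the upward-closure axioms (R2), (R3). The key idea is that a universally quantified inclusion between downsets can be forced to collapse to a pointwise condition on $R$ by evaluating it on carefully chosen test downsets.

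First I would handle the inclusion $Y\subseteq Y\ncdot Y$ and show it is equivalent to $xRxx$ for all $x$. For one direction, if $xRxx$ holds for every $x$ and $x\in Y$, then taking $y=z=x\in Y$ witnesses $x\in Y\ncdot Y$, so $Y\subseteq Y\ncdot Y$. For the converse I would test the inclusion on the principal downset $Y=\da x$: since $x\in\da x\subseteq\da x\ncdot\da x$, there are $y,z\le x$ with $xRyz$, and applying (R2) then (R3) promotes $xRyz$ with $y,z\le x$ to $xRxx$.

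Next I would treat the reverse inclusion $Y\ncdot Y\subseteq Y$ and show it is equivalent to the disjunctive condition $xRyz\implies x\le y\text{ or }x\le z$. Assuming the condition, if $x\in Y\ncdot Y$ then $xRyz$ for some $y,z\in Y$, so $x\le y$ or $x\le z$, and since $Y$ is a downset either case forces $x\in Y$; hence $Y\ncdot Y\subseteq Y$. For the converse I would test on the downset $Y=\da y\cup\da z$: from $xRyz$ we get $x\in Y\ncdot Y\subseteq Y=\da y\cup\da z$, which is exactly $x\le y$ or $x\le z$.

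There is no genuine obstacle here; the only point requiring care is the choice of the right test downsets ($\da x$ for the first inclusion and $\da y\cup\da z$ for the second) so that the quantified lattice identity collapses to the pointwise frame conditions, together with the correct application of (R2) and (R3) to upgrade $xRyz$ with $y,z\le x$ to $xRxx$. Combining the two equivalences gives that $\m D(\m W)$ is idempotent if and only if both $xRxx$ and the disjunctive condition hold for all $x,y,z\in W$, as stated.
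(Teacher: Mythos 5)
Your proof is correct and follows essentially the same route as the paper: both directions use the same test downsets ($\da x$ to extract $xRxx$ via (R2) and (R3), and the downset generated by $\{y,z\}$, which is your $\da y\cup\da z$, to extract the disjunctive condition), and the converse argument is identical. The only cosmetic difference is that you package the argument as two separate equivalences (one per inclusion), which is a mild sharpening of the statement but not a different proof.
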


\begin{proof}
Assume $\m D(\m{W})$ is idempotent, and let $x\in W$. Then $\da x \cdot \da x = \da x$ since $\da x \in D(\m W)$. From $x\in \da x$ we deduce $x\in \da x \cdot \da x$, whence it
follows that $xRyz$ for some $y\in \da x ,z\in \da x$.	Therefore $xRyz$ for $y\leq x, z\leq x$, which implies $xRxx$ by (R2) and (R3).

Next assume $xRyz$ holds. Then $x\in\da\{y,z\}\cdot \da\{y,z\}=\da\{y,z\}$ by idempotence.
Hence for some $w\in \{y,z\}$ we have $x\leq w$, and it follows that $x\leq y$ or $x\leq z$.

For the converse, assume $xRxx$ and $(xRyz\implies x\le y$ or $x\le z)$ for all
$x,y,z\in W$ and let $X\in D(\m W)$. 
From $xRxx$ we obtain $X\subseteq X\cdot X$.

For the reverse inclusion, let $x\in X\cdot X$. Then $xRyz$ holds for some $y,z\in X$.
By assumption $xRyz$ implies $x\leq y$ or $x\leq z$. Since $X$ is a downset,
$x\leq y\implies x\in X$ and $x\leq z\implies x\in X$. Hence $X\cdot X=X$.
\end{proof}

The previous two results are examples of correspondence theory, since they show that
an equational property on a perfect $d\ell$-magma corresponds to
a first-order condition on its Birkhoff frame.

The relational semantics of a perfect $d\ell \p\q $-magma is given by a \emph{PQ-frame}, which
is a partially-ordered relational structure $(W,\le,P,Q)$ such that $P,Q$ are binary relations
on $W$, $u\le xPy\le v\implies uPv$ and $u\le xQy\le v\implies uQv$.
Relations with this property are called \emph{weakening relations} \cite{KV2016,GJ2020}, and this
is what ensures that if we define 
\[\p(Y)=\{x\mid \exists y(xPy\ \&\ y\in Y)\}\]
for a downset
$Y$, then $\p$ is a complete normal join-preserving operator that produces a downset,
and $P$ is uniquely determined by $xPy \Leftrightarrow x\in \p(\da y)$.
Similarly, a normal operator $\q $ is defined from $Q$, and uniquely determines $Q$.
The residual $\p^*$ of $\p$ is a completely meet-preserving operator, defined
by $\p^*(Y)=\{x\mid \forall y(yPx\Rightarrow y\in Y)\}$, and likewise for $\q ^*$. If $P=Q$ then
we omit $Q$ and refer to $(W,\le, P)$ simply as a \emph{$P$-frame}.

We now list some correspondence results for $d\ell \p\q $-magmas. We begin
with a theorem that restates the term-equivalence of Theorem~\ref{pqtermeq}
as a definitional equivalence on frames. A direct proof of this result is straightforward,
but it also follows from Theorem~\ref{pqtermeq} by correspondence theory.

\begin{thm}\label{equivframes} \hfill
\begin{enumerate}[label=\textup{(\arabic*)}]
\item Let $(W,\le,P,Q)$ be a $PQ$-frame such that $x\le y\ \&\ xPz\Rightarrow xQy$ and
$x\le y\ \&\ xQz\Rightarrow xPy$. If we define
$
xRyz\Leftrightarrow (xPy\ \&\ x\le z)\text{ or }(x\le y\ \&\ xQz)
$
then $(W,\le, R)$ is a Birkhoff frame, and $P,Q$ are obtained from $R$ via
$xPy\Leftrightarrow \exists w(xRyw)$ and $xQy\Leftrightarrow\exists w(xRwy)$.

\item Let $(W,\le,R)$ be a Birkhoff frame that satisfies
$
xRyz\Leftrightarrow (\exists w(xRyw)\ \&\ x\le z)\text{ or }(x\le y\ \&\ \exists w(xRwz))
$
and define $xPy\Leftrightarrow \exists w(xRyw)$, \ $xQy\Leftrightarrow\exists w(xRwy)$. Then $(W,\le,P,Q)$ is a $PQ$-frame in which $x\le y\ \&\ xPz\Rightarrow xQy$ and $x\le y\ \&\ xQz\Rightarrow xPy$ hold.
\end{enumerate}
\end{thm}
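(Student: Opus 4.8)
The plan is to argue both parts directly on the frames, in parallel with the algebraic term-equivalence of Theorem~\ref{pqtermeq}: here the weakening relations $P,Q$ play the role of the operators $\p,\q$ and the ternary relation $R$ that of $\cdot$. (One could instead proceed purely by correspondence theory, noting that the two $PQ$-frame implications are the frame correspondents of the defining inequalities $x\wedge\p\top\le\q x$ and $x\wedge\q\top\le\p x$ of a $d\ell\p\q$-algebra, so that the equivalence falls out of Theorem~\ref{pqtermeq}; but the relational argument below is self-contained.)

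For part~(1) I first check that the defined $R$ satisfies the Birkhoff conditions (R1)--(R3). Each is routine: one splits on the two disjuncts of $xRyz$ and uses that $P$ and $Q$ are weakening relations together with reflexivity and transitivity of $\le$. For example, (R1) holds because $u\le x$ propagates through $P$ and $Q$ by weakening in the first argument and through $\le$ directly, while (R2) and (R3) use weakening in the second argument of $P$ and of $Q$ respectively. Notably, none of these three verifications needs the two special hypotheses. Those hypotheses enter only when recovering $P$ and $Q$ from $R$. The inclusions $xPy\Rightarrow\exists w(xRyw)$ and $xQy\Rightarrow\exists w(xRwy)$ are witnessed by taking $w=x$ and using reflexivity. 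For the converse inclusions, unfolding $\exists w(xRyw)$ yields $xPy$ together with the spurious alternative $x\le y\ \&\ \exists w(xQw)$, and the hypothesis $x\le y\ \&\ xQz\Rightarrow xPy$ is exactly what collapses this alternative back to $xPy$; recovering $Q$ is symmetric and uses the other hypothesis.

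For part~(2) I define $P$ and $Q$ by the stated existential formulas. That they are weakening relations is immediate from (R1)--(R3): closure downward in the first argument comes from (R1), and closure upward in the second from (R2) for $P$ and from (R3) for $Q$. It remains to verify the two implications. Given $x\le y$ and $xPz$, i.e.\ $\exists w(xRzw)$, I apply the defining biconditional for $R$ at the triple $(x,z,y)$: its first disjunct, $\exists w(xRzw)\ \&\ x\le y$, holds by hypothesis, so $xRzy$, and reading $xRzy$ as $\exists v(xRvy)$ gives $xQy$. The implication $x\le y\ \&\ xQz\Rightarrow xPy$ is dual, applying the biconditional at $(x,y,z)$ and invoking its second disjunct.

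The one step I expect to require care is the converse recovery in part~(1): one must observe that naively unfolding $\exists w(xRyw)$ introduces the extra disjunct $x\le y\ \&\ \exists w(xQw)$, and that the $PQ$-frame implications are precisely what reabsorb it. This is the frame-level counterpart of the identities $x\rcdot\top=\p x$ and $\top\lcdot x=\q x$ established inside the proof of Theorem~\ref{pqtermeq}. Finally, although part~(2) asks only that $(W,\le,P,Q)$ be a $PQ$-frame satisfying the two implications, the round trip is automatic: substituting the definitions of $P$ and $Q$ into the formula for $R$ from part~(1) reproduces exactly the biconditional assumed in part~(2), so the two constructions are mutually inverse and the equivalence is indeed definitional.
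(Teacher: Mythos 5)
Your proof is correct and is exactly the ``direct proof'' that the paper declares straightforward but omits (the paper itself only remarks that the result can be proved directly or derived from Theorem~\ref{pqtermeq} by correspondence theory, the alternative route you also mention), with the same division of labor: (R1)--(R3) from the weakening properties alone, the two extra $PQ$-frame hypotheses used precisely to absorb the spurious disjunct when recovering $P$ and $Q$, and part~(2) obtained by instantiating the assumed biconditional. One small point worth making explicit: in part~(2) you silently (and correctly) read the paper's condition as $xRyz\Leftrightarrow (\exists w(xRyw)\ \&\ x\le z)\text{ or }(x\le y\ \&\ \exists w(xRwz))$, which repairs the variable-scoping typo in the printed statement, where $w$ occurs free in ``$x\le w$''.
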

Note that the universal formula $x\le y \ \&\ xPz \implies xQy$ corresponds
to the $d\ell \p\q $-magma axiom  $Y\wedge \p\top\le \q Y$.

A significant advantage of $PQ$-frames over Birkhoff frames is that binary relations have a
graphical representation in the form of directed graphs (whereas ternary relations
are 3-ary hypergraphs that are more complicated to draw).
Equational properties from Lemma~\ref{pqprops}, Cor.~\ref{BItermeq} correspond
to the following first-order properties on $PQ$-frames. 

\begin{lem}\label{frameprops}
Assume $\m A$ is a perfect $d\ell \p\q $-algebra and $\m W=(W,\le, P, Q)$ is its
corresponding $PQ$-frame. The constant $1\in A$ (when present) is assumed to correspond to a downset $E\subseteq W$. Then
\begin{enumerate}[label=\textup{(\arabic*)}]
\item $a\le \p a$ holds in $\m A$ if and only if $P$ is reflexive,
\item $\p\p a\le \p a$ holds in $\m A$ if and only if $P$ is transitive,
\item $\p a=\q a$ holds in $\m A$ if and only if $P=Q$,
\item $\p 1=\top$ holds in $\m A$ if and only if $\forall x\exists y(y\in E\ \&\ xPy)$ holds in $\m W$,
\item $\p a\wedge 1\le a$ holds in $\m A$ if and only if $x\in E\ \&\ xPy\Rightarrow x\le y$
holds in $\m W$,
\item $\p a\wedge \p b\le \p((\p a\wedge b)\vee(a\wedge \p b))$ holds in $\m A$ if and only if
\[wPx\ \&\ wPy\Rightarrow \exists v(wPv\ \&\ (vPx\ \&\ v\le y\text{ or }v\le x\ \&\ vPy))\quad\text{holds in $\m W$.}\]
\end{enumerate}
\end{lem}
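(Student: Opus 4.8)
The plan is to handle all six equivalences by correspondence theory, unfolding the definition $\p Y=\{x\mid\exists y(xPy\ \&\ y\in Y)\}$ (and its analogue for $\q$) and using two facts from the discussion preceding the lemma. First, $J(\m D(\m W))$ consists exactly of the principal downsets $\da x$, so an inclusion between completely join-preserving downset terms holds for all downsets precisely when it holds after instantiating the join-irreducible arguments by principal downsets (the \emph{minimal valuation} method). Second, since $P$ and $Q$ are weakening relations one has $v\in\p(\da x)\iff vPx$: the forward direction uses $x\in\da x$, and the converse upgrades $vPx'$ with $x'\le x$ to $vPx$ by weakening. I also record that $\top$ corresponds to $W$ and the constant $1$ to the downset $E$.

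For the routine items (1)--(5) I would argue as follows. In (1), instantiating $a=\da x$ turns $a\le\p a$ into $x\in\p(\da x)$, i.e.\ $\exists y(xPy\ \&\ y\le x)$, and weakening upgrades this to $xPx$; conversely reflexivity gives $x\in a\Rightarrow xPx\Rightarrow x\in\p a$. Item (2) is the standard $\Diamond\Diamond\le\Diamond$ correspondence: $\p\p a$ unfolds to $\{w\mid\exists y\exists z(wPy\ \&\ yPz\ \&\ z\in a)\}$, testing on $a=\da z$ and simplifying by weakening yields transitivity, while the converse is immediate. Item (3) is just the uniqueness of the relation representing an operator (noted before the lemma), so $\p=\q$ iff $P=Q$. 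For (4), $\p 1=\top$ reads $\p E=W$, which unfolds directly to $\forall x\exists y(y\in E\ \&\ xPy)$. For (5), $\p a\wedge 1\le a$ reads $\p a\cap E\subseteq a$ for all downsets $a$; testing on $a=\da y$ produces the frame condition $x\in E\ \&\ xPy\Rightarrow x\le y$, and conversely that condition together with $a$ being a downset closes the inclusion.

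The real content is (6). Here I would compute both sides of $\p a\wedge\p b\le\p((\p a\wedge b)\vee(a\wedge\p b))$ as downsets. Membership $w\in\p a\cap\p b$ means $\exists x(wPx\ \&\ x\in a)$ and $\exists y(wPy\ \&\ y\in b)$, so the minimal valuations activating the antecedent at $w$ are $a=\da x$, $b=\da y$ with $wPx$ and $wPy$. On the right, $w\in\p((\p a\wedge b)\vee(a\wedge\p b))$ unfolds to $\exists v\bigl(wPv\ \&\ ((v\in\p a\ \&\ v\in b)\oor(v\in a\ \&\ v\in\p b))\bigr)$, and substituting the principal valuations while simplifying $v\in\p(\da x)\iff vPx$ and $v\in\da y\iff v\le y$ collapses the right side to exactly $\exists v(wPv\ \&\ ((vPx\ \&\ v\le y)\oor(v\le x\ \&\ vPy)))$, the stated frame condition; feeding the minimal valuations into the algebraic inequality gives the forward direction. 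For the converse, given arbitrary downsets and $w\in\p a\cap\p b$, I choose witnesses $x\in a$, $y\in b$ with $wPx$, $wPy$, apply the frame condition to get $v$, and split on its disjunction: if $vPx\ \&\ v\le y$ then $v\in\p a$ (as $x\in a$) and $v\in b$ (as $b$ is a downset and $v\le y\in b$); if $v\le x\ \&\ vPy$ then symmetrically $v\in a$ and $v\in\p b$. Either way $v$ witnesses $w\in\p((\p a\wedge b)\vee(a\wedge\p b))$.

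The main obstacle is the bookkeeping in (6): collapsing the nested, disjunctive double-$\p$ term on the right to a single first-order formula, and justifying the reduction to principal-downset valuations (that monotonicity and complete join-preservation genuinely suffice), while taking care that each use of the weakening property of $P$ is applied in the correct direction. The remaining items reduce to one- or two-line unfoldings.
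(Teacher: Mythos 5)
Your proposal is correct and takes essentially the same approach as the paper: forward directions by instantiating the algebraic inequalities with principal downsets $\da x$ (the minimal-valuation method), and converses by directly unfolding $\p Y=\{x\mid\exists y(xPy\ \&\ y\in Y)\}$ with chosen witnesses and the downset/weakening properties, exactly as in the paper's treatment of (4)--(6). The only cosmetic difference is that the paper dismisses (1)--(3) as well-known modal correspondences, while you sketch them explicitly.
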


\begin{proof} (1)--(3) These correspondences are well known from modal logic.

(4) For $x\in W$ and $E=\da 1$ we have $x\le \p 1$ if and only if there exists $y\in W$
such that $y\le 1$ and $x\le \p y$, or equivalently, $y\in E$ and $xPy$.

(5) In the forward direction, let $a = \da y$. Then it follows that $x \in \p(\da y) \cap E$ implies $x\in\da y$, and consequently $x\in E \ \&\ xPy\implies x \leq y$.

In the reverse direction, let $Y$ be a downset of $W$ and assume $x \in \p Y \cap E$. Then $x \in E$ and $xPy$ for some $y \in Y$. Hence $x \leq y$, or equivalently $x \in \da y \subseteq Y$. Thus, $\p Y \cap E \subseteq Y$, so the algebra $\m A$ satisfies $\p a\wedge 1\le a$ for all $a\in A$.

(6) In the forward direction, let $a=\da x$ and $b=\da y$. Then it follows from the inequality that $w\in \p\da x\cap \da y\implies w\in \p((\p\da x\cap \da y)\cup(\da x\cap \p\da y))$ for all $w\in W$. This in turn implies
$wPx\ \&\ wPy\implies \exists v(wPv\ \&\ v\in (\p\da x\cap \da y)\cup(\da x\cap \p\da y))$, which translates to the given first-order condition.

In the reverse direction, let $X,Y$ be downsets of $W$ and assume $w\in \p X\cap \p Y$. Then $wPx$ and $wPy$ for some $x\in X$ and $y\in Y$. It follows that there exists a $v\in W$ such that
$(wPv\ \&\ (vPx\ \&\ v\le y\text{ or }v\le x\ \&\ vPy))$, hence $v\in (\p X\cap Y)\cup(X\cap \p Y)$. Therefore $w\in \p(\p X\cap Y)\cup(X\cap \p Y)$.
\end{proof}

Recall that a ternary relation $R$ is commutative if $xRyz\Leftrightarrow xRzy$ for all $x,y$. From Theorem~\ref{equivframes} we also obtain the following result.

\begin{cor}
Let $(W,\leq, P,Q)$ be a $PQ$-frame and define $R$ as in Thm.~\ref{equivframes}(1). Then $R$ is commutative if and only if $P=Q$.
\end{cor}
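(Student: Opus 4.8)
The plan is to read off both directions directly from Theorem~\ref{equivframes}(1), which supplies both the defining formula for $R$ and the two recovery formulas $xPy\Leftrightarrow\exists w(xRyw)$ and $xQy\Leftrightarrow\exists w(xRwy)$; with these in hand the corollary is essentially a syntactic observation rather than a computation.

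For the forward direction I would assume $P=Q$ and substitute $Q=P$ into
$$xRyz\Leftrightarrow(xPy\ \&\ x\le z)\text{ or }(x\le y\ \&\ xQz),$$
obtaining $xRyz\Leftrightarrow(xPy\ \&\ x\le z)\text{ or }(x\le y\ \&\ xPz)$. The key point is that the right-hand side is now manifestly invariant under swapping $y$ and $z$: interchanging these variables sends the disjunct $(xPy\ \&\ x\le z)$ to $(xPz\ \&\ x\le y)$, which is exactly the second disjunct $(x\le y\ \&\ xPz)$, and vice versa. Hence $xRyz\Leftrightarrow xRzy$ for all $x,y,z\in W$, so $R$ is commutative.

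For the converse I would assume $R$ is commutative, i.e.\ $xRyw\Leftrightarrow xRwy$ for all $x,y,w$, and then chain the recovery formulas under the existential quantifier:
$$xPy\Leftrightarrow\exists w(xRyw)\Leftrightarrow\exists w(xRwy)\Leftrightarrow xQy.$$
Since this holds for all $x,y\in W$, we conclude $P=Q$.

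I do not expect a genuine obstacle here; the whole content is recognizing that the defining disjunction for $R$ becomes symmetric in its last two arguments precisely when $P$ and $Q$ coincide, and that the two recovery formulas place $P$ and $Q$ in exactly the two argument slots of $R$ that commutativity interchanges. The only thing to check carefully is that the swap $y\leftrightarrow z$ genuinely interchanges the two disjuncts (rather than altering them), which is immediate since conjunction is commutative.
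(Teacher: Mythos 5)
Your proof is correct and is essentially the paper's own argument: the paper offers no separate proof, presenting the corollary as an immediate consequence of Theorem~\ref{equivframes}, and your two directions (the defining disjunction for $R$ becomes symmetric in $y,z$ when $P=Q$, and chaining the recovery formulas under commutativity of $R$) are exactly that consequence spelled out. The only point worth making explicit is that the recovery formulas $xPy\Leftrightarrow\exists w(xRyw)$ and $xQy\Leftrightarrow\exists w(xRwy)$ are guaranteed by Theorem~\ref{equivframes}(1) only when the frame satisfies its interaction hypotheses $x\le y\ \&\ xPz\Rightarrow xQy$ and $x\le y\ \&\ xQz\Rightarrow xPy$, so these must be read as part of the corollary's assumptions --- without them the converse direction genuinely fails (e.g.\ a one-element frame with $P=\{(a,a)\}$ and $Q=\emptyset$ yields a commutative $R$ with $P\neq Q$), while your forward direction needs no such hypothesis.
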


This corollary shows that in the commutative setting a $PQ$-frame only needs one of the two
binary relations. Hence we define $\m W=(W,\le,P)$ to be a \emph{P-frame} if $P$ is a weakening relation, i.e., $u\le xPy\le v\implies uPv$. 

We now turn to the problem of ensuring that the binary operation of a $d\ell$-magma is associative.
For Birkhoff frames the following characterization of associativity is well known from relation algebras \cite{Ma1982}
(in the Boolean case) and from the Routley-Meyer semantics for relevance logic \cite{DR2002} in general.

\begin{lem}
	Let $\m W=(W,\leq, R)$ be a Birkhoff frame. Then $\m D(\m W)$ is an associative $\ell$-magma if and only if $\forall wxyz(\exists u (uRxy\,\&\,wRuz)\Leftrightarrow  \exists v(vRyz\,\&\,wRxv))$.
	If $R$ is commutative, then the equivalence can be replaced by the implication
$\forall uwxyz(uRxy\,\&\,wRuz\Rightarrow  \exists v(vRyz\,\&\,wRxv))$.
\end{lem}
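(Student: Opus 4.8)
My plan is to compute both sides of the associativity equation $(X\cdot Y)\cdot Z = X\cdot(Y\cdot Z)$ as explicitly first-order definable downsets and then read off the correspondence. Unfolding the definition of $\cdot$ on downsets twice, one obtains
\[
(X\cdot Y)\cdot Z=\{w\mid \exists x\in X,\,\exists y\in Y,\,\exists z\in Z,\,\exists u\;(uRxy\,\&\,wRuz)\}
\]
and, symmetrically,
\[
X\cdot(Y\cdot Z)=\{w\mid \exists x\in X,\,\exists y\in Y,\,\exists z\in Z,\,\exists v\;(vRyz\,\&\,wRxv)\}.
\]
The two descriptions differ only in their innermost existential clause, $\exists u\,(uRxy\,\&\,wRuz)$ versus $\exists v\,(vRyz\,\&\,wRxv)$. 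For the reverse direction this makes the claim immediate: the hypothesized first-order equivalence says exactly that these two inner clauses agree for all $w,x,y,z$, so the two downsets coincide for every choice of $X,Y,Z$, giving associativity of $\m D(\m W)$.

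For the forward direction I would specialize associativity to the principal downsets $X=\da x$, $Y=\da y$, $Z=\da z$. The substantive work here is to collapse the iterated products using the frame axioms: (R2) and (R3) yield $\da x\cdot\da y=\{u\mid uRxy\}$, and one further application of (R3) gives $(\da x\cdot\da y)\cdot\da z=\{w\mid \exists u\,(uRxy\,\&\,wRuz)\}$, with the mirror-image computation on the other side. Associativity then forces the two inner clauses to coincide at each $(w,x,y,z)$, which is precisely the stated equivalence. Keeping track of the closure directions of $R$ in these simplifications is the step most prone to slips, and so is the main (if routine) obstacle.

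For the commutative refinement my plan is to recover the full equivalence from the single implication using commutativity of $R$. The implication yields the $\Rightarrow$ direction of the equivalence directly. For $\Leftarrow$, given $vRyz\,\&\,wRxv$, I would apply commutativity to obtain $vRzy\,\&\,wRvx$, feed these into the implication (with $vRzy$ as its first hypothesis and $wRvx$ as its second, sharing the intermediate element $v$) to produce some $u$ with $uRyx\,\&\,wRzu$, and then apply commutativity once more to convert this into $uRxy\,\&\,wRuz$, as required.
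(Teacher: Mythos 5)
Your proof is correct and follows essentially the same route as the paper: unfold the downset product twice, translate associativity elementwise (specializing to principal downsets $\da x,\da y,\da z$ for the forward direction, where (R2)/(R3) collapse the products exactly as you describe), and reduce the commutative case to a single implication via a symmetry argument. The only cosmetic difference is that the paper runs the commutative step at the algebra level --- $(XY)Z\subseteq X(YZ)$ for all downsets plus commutativity of $\cdot$ yields the reverse inclusion --- whereas you perform the equivalent manipulation directly on the relation $R$; both are the same trick.
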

This lemma is another correspondence result that follows from translating $w\in (XY)Z\Leftrightarrow w\in X(YZ)$ for $X,Y,Z\in D(\m W)$. In the commutative case $(XY)Z\subseteq X(YZ)$ implies the reverse inclusion, hence only one of the implications is needed.
We now show that for a large class of $P$-frames the 6-variable universal-existential formula for associativity can be replaced by simpler universal formulas with only three variables.

A \emph{preorder forest $P$-frame} is a $P$-frame such that $P$ is a preorder (i.e., reflexive and transitive) and satisfies the formula
\begin{center}
$\qquad xPy \text{ and } xPz \implies  x\le y\text{ or }x\le z\text{ or }yPz\text{ or }zPy$.\qquad (Pforest)
\end{center}

Note that since $P$ is a weakening relation, reflexivity of $P$ implies that ${\le}\subseteq P$ because $xPx$ and $x\le y$ implies $xPy$.

It is interesting to visualize the properties that define preorder forest $P$-frames by implications between
Hasse diagrams with $\le$-edges (solid) and $P$-edges (dotted) as in Figure~\ref{P-frame}.
However, one needs to keep in mind that dotted lines could be horizontal (if $xPy$ and $yPx$) and
that any line could be a loop if two variables refer to the same element.

\begin{figure}
\begin{center}
\tikzstyle{every picture} = [scale=.25, baseline=0pt]
\tikzstyle{every node} = [draw=none, rectangle, inner sep=1pt] 
\tikzstyle{t} = [thick]
\tikzstyle{d} = [thick, densely dotted]
\quad(Pforest)
\begin{tikzpicture}
\node(0)at(1,-1){$x$};
\node(1)at(0,1){$y$};
\node(2)at(2,1){$z$};
\draw[d](0)--(1);
\draw[d](0)--(2);
\end{tikzpicture}
$\implies$
\begin{tikzpicture}
\node(0)at(1,-1){$x$};
\node(1)at(0,1){$y$};
\node(2)at(2,1){$z$};
\draw[t](0)--(1);
\draw[d](0)--(2);
\end{tikzpicture}
 \ or \
\begin{tikzpicture}
\node(0)at(1,-1){$x$};
\node(1)at(0,1){$y$};
\node(2)at(2,1){$z$};
\draw[d](0)--(1);
\draw[t](0)--(2);
\end{tikzpicture}
 \ or \
\begin{tikzpicture}
\node(0)at(1,-1){$x$};
\node(1)at(1,1){$y$};
\node(2)at(1,3){$z$};
\draw[d](0)--(1);
\draw[d](1)--(2);
\end{tikzpicture}
 \ or \
\begin{tikzpicture}
\node(0)at(1,-1){$x$};
\node(1)at(1,3){$y$};
\node(2)at(1,1){$z$};
\draw[d](0)--(2);
\draw[d](2)--(1);
\end{tikzpicture}
\end{center}
\caption{The (Pforest) axiom. The partial order $\le$ and the preorder $P$ are denoted by solid lines and dotted lines respectively.}\label{P-frame}
\end{figure}

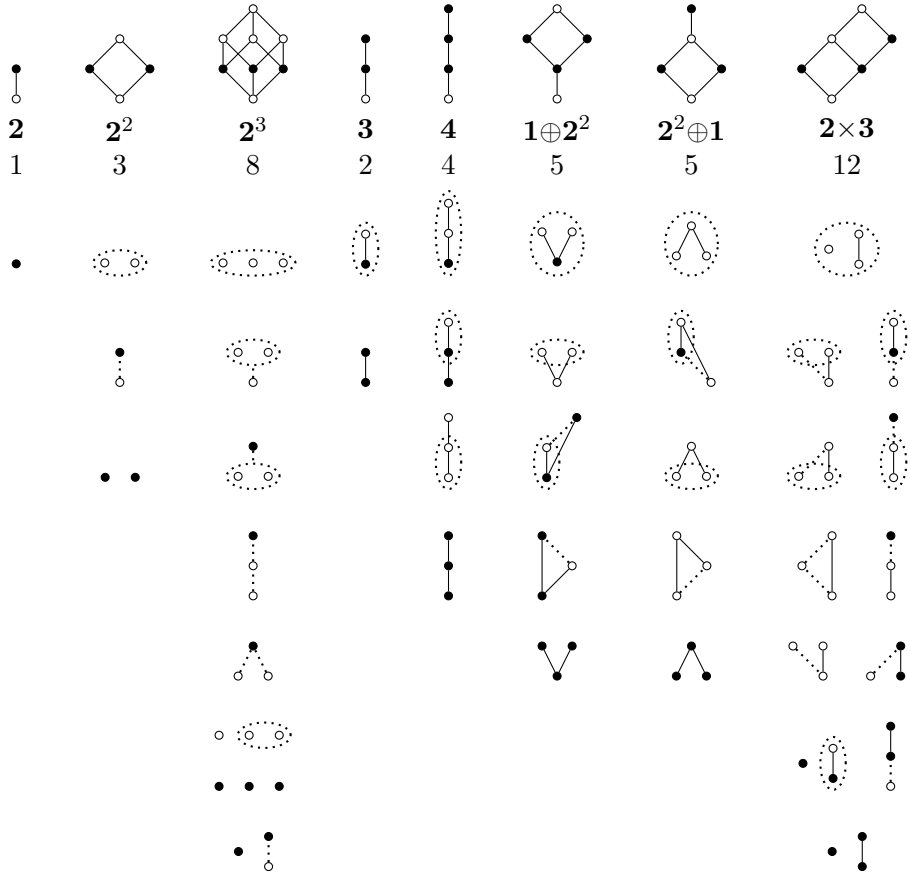
\begin{figure}[ht]
	\centering
	\tikzstyle{every picture} = [scale=.4]
	\tikzstyle{every node} = [draw, fill=white, ellipse, inner sep=0pt, minimum size=3pt]
	\tikzstyle{n} = [draw=none, rectangle, inner sep=2pt] 
	\tikzstyle{i} = [draw, fill=black, circle, inner sep=0pt, minimum size=3pt]

	\tikzstyle{d} = [thick, dotted ]

	\begin{tabular}{cccccccc}
		\begin{tikzpicture}
		\node at (0,-1)[n]{$\m 2$};
		\node at (0,-2.2)[n]{$1$};
		\node(1) at (0,1)[i]{};
		\node(0) at (0,0){};
		\draw(0) edge (1);
		\end{tikzpicture}
		&\quad
		\begin{tikzpicture}
		\node at (0,-1)[n]{$\m 2^2$};
		\node at (0,-2.2)[n]{$3$};
		\node(3) at (0,2){};
		\node(2) at (1,1)[i]{};
		\node(1) at (-1,1)[i]{};
		\node(0) at (0,0){};
		\draw(2) edge (3);
		\draw(1) edge (3);
		\draw(0) edge (1) edge (2);
		\end{tikzpicture}
		&\quad
		\begin{tikzpicture}
		\node at (0,-1)[n]{$\m 2^3$};
		\node at (0,-2.2)[n]{$8$};
		\node at (0,4)[n]{$ $};
		\node(7) at (0,3){};
		\node(6) at (1,2){};
		\node(5) at (0,2){};
		\node(4) at (-1,2){};
		\node(3) at (1,1)[i]{};
		\node(2) at (0,1)[i]{};
		\node(1) at (-1,1)[i]{};
		\node(0) at (0,0){};
		\draw(6) edge (7);
		\draw(5) edge (7);
		\draw(4) edge (7);
		\draw(3) edge (5) edge (6);
		\draw(2) edge (4) edge (6);
		\draw(1) edge (4) edge (5);
		\draw(0) edge (1) edge (2) edge (3);
		\end{tikzpicture}
		&\quad
		\begin{tikzpicture}
		\node at (0,-1)[n]{$\m 3$};
		\node at (0,-2.2)[n]{$2$};
		\node(2) at (0,2)[i]{};
		\node(1) at (0,1)[i]{};
		\node(0) at (0,0){};
		\draw(1) edge (2);
		\draw(0) edge (1);
		\end{tikzpicture}
		&\quad
		\begin{tikzpicture}
		\node at (0,-1)[n]{$\m 4$};
		\node at (0,-2.2)[n]{$4$};
		\node(3) at (0,3)[i]{};
		\node(2) at (0,2)[i]{};
		\node(1) at (0,1)[i]{};
		\node(0) at (0,0){};
		\draw(2) edge (3);
		\draw(1) edge (2);
		\draw(0) edge (1);
		\end{tikzpicture}
		&\quad
		\begin{tikzpicture}
		\node at (0,-1)[n]{$\m 1{\oplus}\m 2^2$};
		\node at (0,-2.2)[n]{$5$};
		\node(4) at (0,3){};
		\node(3) at (1,2)[i]{};
		\node(2) at (-1,2)[i]{};
		\node(1) at (0,1)[i]{};
		\node(0) at (0,0){};
		\draw(3) edge (4);
		\draw(2) edge (4);
		\draw(1) edge (2) edge (3);
		\draw(0) edge (1);
		\end{tikzpicture}
		&\quad
		\begin{tikzpicture}
		\node at (0,-1)[n]{$\m 2^2{\oplus}\m 1$};
		\node at (0,-2.2)[n]{$5$};
		\node(4) at (0,3)[i]{};
		\node(3) at (0,2){};
		\node(2) at (1,1)[i]{};
		\node(1) at (-1,1)[i]{};
		\node(0) at (0,0){};
		\draw(3) edge (4);
		\draw(2) edge (3);
		\draw(1) edge (3);
		\draw(0) edge (1) edge (2);
		\end{tikzpicture}
		&\quad
		\begin{tikzpicture}
		\node at (0.5,-1)[n]{$\m 2{\times}\m 3$};
		\node at (0.5,-2.2)[n]{$12$};
		\node(5) at (1,3){};
		\node(4) at (2,2)[i]{};
		\node(3) at (0,2){};
		\node(2) at (1,1)[i]{};
		\node(1) at (-1,1)[i]{};
		\node(0) at (0,0){};
		\draw(4) edge (5);
		\draw(3) edge (5);
		\draw(2) edge (3) edge (4);
		\draw(1) edge (3);
		\draw(0) edge (1) edge (2);
		\end{tikzpicture}
		\\
		\begin{tikzpicture}
		\node(0) at (0,1)[n]{};
		\node(1) at (0,1.25)[i]{};
		\end{tikzpicture}
		&\quad
		\begin{tikzpicture}
		\node(2) at (1,1){};
		\node(1) at (0,1){};
		\draw[d] (0.5,1) ellipse [x radius=25pt, y radius=12pt];
		\end{tikzpicture}
		&\quad
		\begin{tikzpicture}
		\node(3) at (2,1){};
		\node(2) at (1,1){};
		\node(1) at (0,1){};
		\draw[d] (1,1) ellipse [x radius=40pt, y radius=12pt];
		\end{tikzpicture}
		&\quad
		\begin{tikzpicture}
		\node(2) at (0,2){};
		\node(1) at (0,1)[i]{};
		\draw(1) edge (2);
		\draw[d] (0,1.5) ellipse [x radius=12pt, y radius=25pt];
		\end{tikzpicture}
		&\quad
		\begin{tikzpicture}
		\node(3) at (0,3){};
		\node(2) at (0,2){};
		\node(1) at (0,1)[i]{};
		\draw(2) edge (3);
		\draw(1) edge (2);
		\draw[d] (0,2) ellipse [x radius=12pt, y radius=40pt];
		\end{tikzpicture}
		&\quad
		\begin{tikzpicture}
		\node(3) at (.5,2){};
		\node(2) at (-.5,2){};
		\node(1) at (0,1)[i]{};
		\draw(1) edge (2) edge (3);
		\draw[d] (0,1.6) ellipse [x radius=25pt, y radius=30pt];
		\end{tikzpicture}
		&\quad
		\begin{tikzpicture}
		\node(4) at (0,2){};
		\node(2) at (0.5,1){};
		\node(1) at (-0.5,1){};
		\draw(2) edge (4);
		\draw(1) edge (4);
		\draw[d] (0,1.4) ellipse [x radius=25pt, y radius=30pt];
		\end{tikzpicture}
		&\quad
		\begin{tikzpicture}
		\node(4) at (1,2){};
		\node(2) at (1,1){};
		\node(1) at (0,1.5){};
		\draw(2) edge (4);
		\draw[d] (0.6,1.5) ellipse [x radius=30pt, y radius=25pt];
		\end{tikzpicture}
		\\
		&\quad
		\begin{tikzpicture}
		\node(2) at (0,2)[i]{};
		\node(1) at (0,1){};
		\draw[d](1) edge (2);
		\end{tikzpicture}
		&\quad
		\begin{tikzpicture}
		\node(4) at (1,2){};
		\node(3) at (0.5,1.8)[n]{};
		\node(2) at (0.5,1){};
		\node(1) at (0,2){};
		\draw[d] (2) edge (3);
		\draw[d] (0.5,2) ellipse [x radius=25pt, y radius=12pt];
		\end{tikzpicture}
		&\quad
		\begin{tikzpicture}
		\node(2) at (0,2)[i]{};
		\node(1) at (0,1)[i]{};
		\draw(1) edge (2);
		\end{tikzpicture}
		&\quad
		\begin{tikzpicture}
		\node(3) at (0,3){};
		\node(2) at (0,2)[i]{};
		\node(1) at (0,1)[i]{};
		\draw(2) edge (3);
		\draw(1) edge (2);
		\draw[d] (0,2.5) ellipse [x radius=12pt, y radius=25pt];
		\end{tikzpicture}
		&\quad
		\begin{tikzpicture}
		\node(3) at (.5,2){};
		\node(2) at (-.5,2){};
		\node(1) at (0,1){};
		\draw(1) edge (2) edge (3);
		\draw[d] (0,2) ellipse [x radius=25pt, y radius=12pt];
		\end{tikzpicture}
		&\quad
		\begin{tikzpicture}
		\node(4) at (0,2){};
		\node(2) at (0,1)[i]{};
		\node(1) at (1,0){};
		\draw(2) edge (4);
		\draw(1) edge (4);
		\draw[d](1) edge (2);
		\draw[d] (0,1.5) ellipse [x radius=12pt, y radius=25pt];
		\end{tikzpicture}
		&\quad
		\begin{tikzpicture}
		\node(4) at (1,2){};
		\node(2) at (1,1){};
		\node(1) at (0,2){};
		\draw(2) edge (4);
		\draw[d](1) edge (2);
		\draw[d] (0.5,2) ellipse [x radius=25pt, y radius=12pt];
		\end{tikzpicture}
		\quad
		\begin{tikzpicture}
		\node at (0,4)[n]{$ $};
		\node(3) at (0,3){};
		\node(2) at (0,2)[i]{};
		\node(1) at (0,1){};
		\draw(2) edge (3);
		\draw[d](1) edge (2);
		\draw[d] (0,2.5) ellipse [x radius=12pt, y radius=25pt];
		\end{tikzpicture}
		\\
		&\quad
		\begin{tikzpicture}
		\node(0) at (0,1)[n]{};
		\node(2) at (1,1.25)[i]{};
		\node(1) at (0,1.25)[i]{};
		\end{tikzpicture}
		&\quad
		\begin{tikzpicture}
		\node(4) at (1,1){};
		\node(3) at (0.5,1.2)[n]{};
		\node(2) at (0.5,2)[i]{};
		\node(1) at (0,1){};
		\draw[d] (2) edge (3);
		\draw[d] (0.5,1) ellipse [x radius=25pt, y radius=12pt];
		\end{tikzpicture}
		&\quad
		&\quad
		\begin{tikzpicture}
		\node(3) at (0,3){};
		\node(2) at (0,2){};
		\node(1) at (0,1){};
		\draw(2) edge (3);
		\draw(1) edge (2);
		\draw[d] (0,1.5) ellipse [x radius=12pt, y radius=25pt];
		\end{tikzpicture}
		&\quad
		\begin{tikzpicture}
		\node(3) at (1,3)[i]{};
		\node(2) at (0,2){};
		\node(1) at (0,1)[i]{};
		\draw(1) edge (2) edge (3);
		\draw[d](2) edge (3);
		\draw[d] (0,1.5) ellipse [x radius=12pt, y radius=25pt];
		\end{tikzpicture}
		&\quad
		\begin{tikzpicture}
		\node at (0,3)[n]{$ $};
		\node(4) at (0,2){};
		\node(2) at (0.5,1){};
		\node(1) at (-0.5,1){};
		\draw(2) edge (4);
		\draw(1) edge (4);
		\draw[d] (0,1) ellipse [x radius=25pt, y radius=12pt];
		\end{tikzpicture}
		&\quad
		\begin{tikzpicture}
		\node(4) at (1,2){};
		\node(2) at (1,1){};
		\node(1) at (0,1){};
		\draw(2) edge (4);
		\draw[d](1) edge (4);
		\draw[d] (0.5,1) ellipse [x radius=25pt, y radius=12pt];
		\end{tikzpicture}
		\quad
		\begin{tikzpicture}
		\node at (0,3.5)[n]{$ $};
		\node(3) at (0,3)[i]{};
		\node(2) at (0,2){};
		\node(1) at (0,1){};
		\draw[d](2) edge (3);
		\draw(1) edge (2);
		\draw[d] (0,1.5) ellipse [x radius=12pt, y radius=25pt];
		\end{tikzpicture}
		\\
		&\quad
		&\quad
		\begin{tikzpicture}
		\node at (0,3.5)[n]{$ $};
		\node(3) at (0,3)[i]{};
		\node(2) at (0,2){};
		\node(1) at (0,1){};
		\draw[d](2) edge (3);
		\draw[d](1) edge (2);
		\end{tikzpicture}
		&\quad
		&\quad
		\begin{tikzpicture}
		\node(3) at (0,3)[i]{};
		\node(2) at (0,2)[i]{};
		\node(1) at (0,1)[i]{};
		\draw(2) edge (3);
		\draw(1) edge (2);
		\end{tikzpicture}
		&\quad
		\begin{tikzpicture}
		\node(3) at (1,2){};
		\node(2) at (0,3)[i]{};
		\node(1) at (0,1)[i]{};
		\draw[d](3) edge (2);
		\draw(1) edge (2) edge (3);
		\end{tikzpicture}
		&\quad
		\begin{tikzpicture}
		\node at (0,3.5)[n]{$ $};
		\node(3) at (0,3){};
		\node(2) at (1,2){};
		\node(1) at (0,1){};
		\draw(2) edge (3);
		\draw[d](1) edge (2);
		\draw(1) edge (3);
		\end{tikzpicture}
		&\quad
		\begin{tikzpicture}
		\node at (0,3.5)[n]{$ $};
		\node(3) at (0,3){};
		\node(2) at (-1,2){};
		\node(1) at (0,1){};
		\draw[d](2) edge (3);
		\draw[d](1) edge (2);
		\draw(1) edge (3);
		\end{tikzpicture}
		\quad \
		\begin{tikzpicture}
		\node at (0,4)[n]{$ $};
		\node(3) at (0,3)[i]{};
		\node(2) at (0,2){};
		\node(1) at (0,1){};
		\draw[d](2) edge (3);
		\draw(1) edge (2);
		\end{tikzpicture}
		\\
		&\quad
		&\quad
		\begin{tikzpicture}
		\node(4) at (0,2)[i]{};
		\node(2) at (0.5,1){};
		\node(1) at (-0.5,1){};
		\draw[d](2) edge (4);
		\draw[d](1) edge (4);
		\end{tikzpicture}

		&\quad
		&\quad
		&\quad
		\begin{tikzpicture}
			\node(3) at (.5,2)[i]{};
			\node(2) at (-.5,2)[i]{};
			\node(1) at (0,1)[i]{};
			\draw(1) edge (2) edge (3);
		\end{tikzpicture}
		&\quad
		\begin{tikzpicture}
		\node at (0,3)[n]{$ $};
		\node(4) at (0,2)[i]{};
		\node(2) at (0.5,1)[i]{};
		\node(1) at (-0.5,1)[i]{};
		\draw(2) edge (4);
		\draw(1) edge (4);
		\end{tikzpicture}
		&\quad
		\begin{tikzpicture}
		\node(4) at (1,2){};
		\node(2) at (1,1){};
		\node(1) at (0,2){};
		\draw(2) edge (4);
		\draw[d](1) edge (2);
		\end{tikzpicture}
		\quad
		\begin{tikzpicture}
		\node(4) at (1,2)[i]{};
		\node(2) at (1,1)[i]{};
		\node(1) at (0,1){};
		\draw(2) edge (4);
		\draw[d](1) edge (4);
		\end{tikzpicture}
		\\
		&\quad
		&\quad
		\begin{tikzpicture}
		\node(3) at (2,2.7){};
		\node(2) at (1,2.7){};
		\node(1) at (0,2.7){};
		\draw[d] (1.5,2.7) ellipse [x radius=25pt, y radius=12pt];
		\node(3) at (2,1)[i]{};
		\node(2) at (1,1)[i]{};
		\node(1) at (0,1)[i]{};
		\end{tikzpicture}
		&\quad
		&\quad
		&\quad
		&\quad
		&\quad
		\begin{tikzpicture}
		\node at (0,3)[n]{$ $};
		\node(4) at (1,2){};
		\node(2) at (1,1)[i]{};
		\node(1) at (0,1.5)[i]{};
		\draw(2) edge (4);
		\draw[d] (1,1.5) ellipse [x radius=12pt, y radius=25pt];
		\end{tikzpicture}
		\quad
		\begin{tikzpicture}
		\node at (0,4)[n]{$ $};
		\node(3) at (0,3)[i]{};
		\node(2) at (0,2)[i]{};
		\node(1) at (0,1){};
		\draw(2) edge (3);
		\draw[d](1) edge (2);
		\end{tikzpicture}
		\\
		&\quad
		&\quad
		\begin{tikzpicture}
		\node at (0,3)[n]{$ $};
		\node(4) at (1,2)[i]{};
		\node(2) at (1,1){};
		\node(1) at (0,1.5)[i]{};
		\draw[d] (2) edge (4);
		\end{tikzpicture}
		&\quad
		&\quad
		&\quad
		&\quad
		&\quad
		\begin{tikzpicture}
		\node at (0,3)[n]{$ $};
		\node(4) at (1,2)[i]{};
		\node(2) at (1,1)[i]{};
		\node(1) at (0,1.5)[i]{};
		\draw(2) edge (4);
		\end{tikzpicture}
	\end{tabular}
	\caption{All 40 preorder forest $P$-frames $(W,\le,P)$ with up to 3 elements. Solid lines show $(W,\le)$, dotted lines show the additional edges of $P$, and the identity (if it exists) is the set of black dots. The first row shows the lattice of downsets, and the  Boolean quantales from \cite{AJ2020} appear in the first three columns.}\label{Pframes}
\end{figure}

We are now ready to state the main result. We use the algebraic characterization of associativity in Lemma~\ref{pqprops}.

\begin{thm}\label{assoc}
Let $\m W=(W,\leq,P)$ be a preorder forest $P$-frame and $\m D(\m W)$ its corresponding downset algebra.
Then the operation $x\ncdot  y=(\p x\wedge y)\vee(x\wedge \p y)$
is associative in $\m D(\m W)$.
\end{thm}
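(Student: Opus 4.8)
The plan is to reduce associativity to a three-variable first-order condition on the frame and then verify that condition directly from the (Pforest) axiom. First I would observe that since $P$ is a preorder, Lemma~\ref{frameprops}(1)--(2) show that the operator $\p$ associated with $P$ is inflationary and satisfies $\p\p a\le\p a$, so $\p$ is a closure operator on the bounded distributive lattice $D(\m W)$; it is normal because $\p\emptyset=\emptyset$. Hence $\m D(\m W)$ is a $d\ell \p$-closure algebra, and by Lemma~\ref{five} the associativity of $\cdot$ is equivalent to the single inequality $\p x\mt\p y\le\p((\p x\mt y)\jn(x\mt\p y))$.

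Next I would invoke Lemma~\ref{frameprops}(6), which translates exactly this inequality into the first-order condition
$$wPx\ad wPy\implies\exists v\bigl(wPv\ad(vPx\ad v\le y\ \oor\ v\le x\ad vPy)\bigr)$$
on $\m W$. It therefore suffices to verify this implication for every preorder forest $P$-frame. The verification is a case split supplied by the (Pforest) axiom: from $wPx$ and $wPy$ we obtain $w\le x$, $w\le y$, $xPy$, or $yPx$. In each branch a suitable witness is immediate. Taking $v=w$ handles the two order cases (using reflexivity of $P$ to get $wPw$, after which $w\le x$ gives the second disjunct and $w\le y$ gives the first), while $v=x$ handles $xPy$ and $v=y$ handles $yPx$ (using reflexivity of $\le$ to supply the trivial order edge $x\le x$, resp.\ $y\le y$). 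One checks in each case that $wPv$ holds and that one of the two disjuncts of the consequent is satisfied.

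I expect the only genuine content to lie in picking the right witness in each branch and matching it against the correct disjunct; the remaining bookkeeping is routine once the four (Pforest) cases are laid out. The main conceptual point is recognizing that the whole problem collapses onto Lemma~\ref{frameprops}(6): a naive attempt to establish associativity via the six-variable Birkhoff-frame criterion would be considerably more delicate, whereas the closure-operator structure forced by the preorder $P$ lets us replace it with the single three-variable inequality above. An alternative self-contained route would bypass Lemma~\ref{five} and unwind the definitions of $\cdot$ and $\p$ directly on downsets, comparing membership of $w$ in $(X\cdot Y)\cdot Z$ and $X\cdot(Y\cdot Z)$; but this reproduces the same case analysis with extra overhead, so routing through the two correspondence lemmas is cleaner.
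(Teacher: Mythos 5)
Your proof is correct and takes essentially the same route as the paper's: it reduces to the $d\ell \p$-closure-algebra setting, applies Lemma~\ref{five} and Lemma~\ref{frameprops}(6) to replace associativity by the three-variable frame condition, and verifies that condition by the same four-case (Pforest) split with the same choice of witnesses. The only cosmetic difference is that you cite Lemma~\ref{frameprops}(1)--(2) and normality ($\p\emptyset=\emptyset$) explicitly to justify the closure-algebra step, which the paper leaves implicit.
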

\begin{proof}
Let $\m W=(W,\le,P)$ be a preorder forest $P$-frame and $\m D(\m W)$ its $d\ell \p$-algebra of downsets with operator $\p $. 
Since $P$ is a preorder, $\m D(\m W)$ is a $d\ell \p$-closure algebra.
By Lemma~\ref{five}, a $d\ell \p$-closure algebra
is associative if and only if $\p(x)\wedge \p(y)\le \p(\p(x)\wedge y)\vee(x\wedge \p(y))$.
By Lemma~\ref{frameprops} this is equivalent to the frame property
\[\qquad xPy\ \&\ xPz\Rightarrow \exists w(xPw\ \&\ (wPy\ \&\ w\le z\text{ or }w\le y\ \&\ wPz)).\qquad (*)\]

We now show that this frame property holds in $\m W$. We know that $P$ is reflexive and (Pforest) holds.

Assume $xPy$ and $xPz$. By (Pforest) there are four cases:
\begin{enumerate}
\item $x\le y$: take $w=x$. Then $xPx$, $x\le y$ and $xPz$, hence $(*)$
holds.
\item $x\le z$: again take $w=x$. Then the other disjunct of $(*)$
holds.
\item $yPz$: take $w=y$. Then $xPy$, $y\le y$ and $yPz$, hence $(*)$ holds.
\item $zPy$: take $w=z$. Then $xPz$, $zPy$ and $y\le y$, hence again $(*)$ holds.\qed
\end{enumerate}
\let\qed\relax
\end{proof}

The universal class of preorder forest $P$-frames is strictly contained in the 
class of all $P$-frames in which $x\ncdot  y$ is associative. In fact the latter
class is not closed under substructures, hence not a universal class: $W=\{0,1,2,3\}$, ${\le}=id_W\cup\{(0,1),(0,2),(0,3)\}$, $P={\le}\cup\{(1,0),(1,2),(1,3)\}$ is a $P$-frame with associative $\cdot$ (use e.g. Lemma~\ref{five}), but restricting $\le, P$ to the subset $\{1,2,3\}$ gives a $P$-frame where $\cdot$ fails to be associative, hence (Pforest) also fails.

A \emph{$d\ell$-semilattice} is an associative commutative idempotent $d\ell$-magma. The point of the previous result is that it allows the construction of perfect associative commutative idempotent $d\ell$-magmas and idempotent bunched implication algebras from preorder forest $P$-frames. This is much simpler than constructing the ternary relation $R$ of the Birkhoff frame of such algebras. For example the Hasse diagrams for all the preorder forest $P$-frames with up to 3 elements are shown in Figure~\ref{Pframes}, with the preorder $P$ given by dotted lines and ovals. The corresponding ternary relations can be calculated from $P$, but would have been hard to include in each diagram.

We now examine when a $P$-frame will have an identity element.

\begin{lem}\label{E}
Let $\m W$ be a $P$-frame and $E$ a downset of $W$.
Then the downset algebra $D(\m W)$ has $E$ as identity element for $\cdot$ if and only if $E=\{x\in W \mid \forall y(xPy\Rightarrow x\le y)\}$ and $\p E=W$.
\end{lem}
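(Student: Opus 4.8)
The plan is to route everything through the algebraic identity-criterion of Lemma~\ref{pqprops}(5), specialised to the case $\p = \q$ that holds for $P$-frames, and then to translate the two resulting conditions into frame language via Lemma~\ref{frameprops}. Write $E_0 = \{x \in W \mid \forall y(xPy \Rightarrow x \le y)\}$ for the set appearing in the statement. Since $\m W$ is a $P$-frame, $\m D(\m W)$ is a commutative perfect $d\ell \p$-algebra with product $X \cdot Y = (\p X \cap Y) \cup (X \cap \p Y)$, so by Lemma~\ref{pqprops}(5) the downset $E$ is an identity for $\cdot$ if and only if $\p E = W$ and $\p X \cap E \subseteq X$ for every downset $X$. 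By Lemma~\ref{frameprops}(5) this last inclusion is equivalent to the frame condition $x \in E\ \&\ xPy \Rightarrow x \le y$, which is precisely the statement $E \subseteq E_0$. Thus the task reduces to showing that, in the presence of $\p E = W$, the inclusion $E \subseteq E_0$ can be strengthened to the equality $E = E_0$.

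For the forward implication I would assume $E$ is an identity, so $\p E = W$ and $E \subseteq E_0$ by the reduction above, and then prove $E_0 \subseteq E$. Given $x \in E_0$, the equality $\p E = W$ yields $x \in \p E$, so there is a $z \in E$ with $xPz$; membership of $x$ in $E_0$ turns $xPz$ into $x \le z$, and since $E$ is a downset this gives $x \in E$. Hence $E = E_0$ and $\p E = W$, as claimed.

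For the converse I would assume $E = E_0$ and $\p E = W$. Then $E \subseteq E_0$ holds trivially, so Lemma~\ref{frameprops}(5) restores $\p X \cap E \subseteq X$ for all downsets $X$, and together with $\p E = W$ Lemma~\ref{pqprops}(5) yields that $E$ is an identity (commutativity collapses the two one-sided conditions into one). Alternatively one can verify directly that $X \cdot E = (\p X \cap E) \cup (X \cap \p E) = (\p X \cap E) \cup X = X$, using $\p E = W$ and $\p X \cap E \subseteq X$.

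The one genuinely non-routine step is the inclusion $E_0 \subseteq E$ in the forward direction: the correspondence lemmas hand over $E \subseteq E_0$ essentially for free, but give no direct grip on the reverse containment. The crux is to use the surjectivity condition $\p E = W$ to manufacture, for each $x \in E_0$, a witness $z \in E$ lying above $x$, after which downward closure of $E$ finishes the argument. Everything else is a mechanical unpacking of the two cited lemmas.
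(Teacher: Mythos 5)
Your proof is correct and follows essentially the same route as the paper's: both reduce the identity property via Lemma~\ref{pqprops}(5) to the pair of conditions $\p E=W$ and $\p X\cap E\subseteq X$ (the latter translated to $E\subseteq E_0$ by Lemma~\ref{frameprops}(5), which the paper does by direct computation on principal downsets in the forward direction), and both close the gap $E_0\subseteq E$ by the same argument: use $\p E=W$ to produce $z\in E$ with $xPz$, apply the defining property of $E_0$ to get $x\le z$, and invoke downward closure of $E$.
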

\begin{proof}
In the forward direction assume a downset $E$ is the identity for $\cdot$, and let $y\in W$. 
It follows from Lemma~\ref{pqprops}(5) that $\p E = W$ since $W$ is the top element in $D(\m W)$, and moreover, $(\p E\cap\da y)\cup(E\cap\p(\da y))=\da y$. Hence $E\cap\p(\da y)\subseteq \da y$ for all $y$, which shows that if $x\in E$ then $\forall y(xPy\Rightarrow x\le y)$ holds. Now let $x\in W$ satisfy $\forall y(xPy\Rightarrow x\le y)$. From $p E=W$ we deduce that $xPz$ for some $z$, hence $x\le z$ and, since $E$ is a downset, $x\in E$.

Conversely, by the definition of $E$, if $x \in E$, then $xPy \Rightarrow x \le y$ holds for all $y\in W$. Hence by Lemma~\ref{frameprops}(5) for all $X\in D(\m W)$ we have $\p X\cap E\subseteq X$.
Since $\p E=W$ together with Lemma~\ref{pqprops}(5), it follows that $E$ is an identity element in the downset algebra.
\end{proof}

\section{Weakly conservative perfect \texorpdfstring{$d\ell$}{dl}-magmas and Birkhoff frames}

In this section we explore a special case that arises when the relations $P$ and $Q$ are
determined from $R$ by $xPy\Leftrightarrow xRyx$ and $xQy\Leftrightarrow xRxy$, i.e., the existential quantifier from the previous section is instantiated by $z=x$. We first discuss some related algebraic properties.

A binary operation $\cdot$ is called \emph{conservative} (or \emph{quasitrivial}) if the output value is always one of the two inputs, i.e., it satisfies $xy=x\text{ or }xy=y\text{ for all }x,y \in A$. Note that this property implies idempotence.

In general a $d\ell$-magma is idempotent if and only if it satisfies $x\mt y\le xy\le x\jn y$, since
$
x\mt y=(x\mt y)(x\mt y)\le xy\le (x\jn y)(x\jn y)=x\jn y,
$
and conversely, identifying $x,y$ we have $x\le xx\le x$.

A perfect $\ell$-magma $A$ is called \emph{weakly conservative} if it satisfies the formula
\[xy=x\mt y\text{ or }xy=x\text{ or }xy=y\text{ or }xy=x\jn y\text{ for all $x,y\in J(A)$}.\]
So for completely join-irreducible elements $x,y$ the product
$xy\in\{x\wedge y,x,y,x\vee y\}$.
This is a generalization of conservativity in two ways since there are additional possibilities for
the value of $xy$ and the formula only needs to hold for completely join-irreducible elements.

A typical example of a weakly conservative perfect $\ell$-magma is an atomic Boolean algebra with an idempotent binary operation $xy$. In this case the completely join-irreducible elements are the atoms of the Boolean algebra, and for any two atoms $x,y$ the interval $[x\wedge y,x\vee y]\subseteq \{x\wedge y,x,y,x\vee y\}$. Since we observed previously that $x\wedge y\le xy\le x\vee y$ it follows that $xy$ can only take on one of the four values $x\wedge y,x,y,x\vee y$.

The notation $x\le yRzw$ is shorthand for $x\le y$ and $yRzw$. We also write $x\le y,z$ as an abbreviation for $x\le y$ and $x\le z$.
A Birkhoff frame is called \emph{weakly conservative} if it satisfies
\[
xRyz\Leftrightarrow x\leq y,z \text{ or } x\leq yRyz\text{ or } x\leq zRyz.
\]
This terminology is motivated by the following result.
\begin{lem}
Let $\m{W}$ be a Birkhoff frame. Then $D(\m{W})$ is weakly conservative if and only if $\m W$ is weakly conservative.
\end{lem}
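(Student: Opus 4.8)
The plan is to exploit the duality recalled above. The completely join-irreducible elements of $D(\m{W})$ are exactly the principal downsets $\da x$ for $x\in W$, and in the lattice of downsets we have $\da x\wedge\da y=\da x\cap\da y$ and $\da x\jn\da y=\da x\cup\da y$. First I would compute the relevant products: for $y,z\in W$, the defining formula for $\cdot$ together with (R2) and (R3) gives
$$\da y\cdot\da z=\{w\in W\mid wRyz\},$$
since any witness $wRy'z'$ with $y'\le y$ and $z'\le z$ yields $wRyz$, while $wRyz$ is itself such a witness. Writing $R_{yz}$ for this set, (R1) makes it a downset. With this notation, weak conservativity of $D(\m{W})$ says precisely that $R_{yz}\in\{\da y\cap\da z,\ \da y,\ \da z,\ \da y\cup\da z\}$ for all $y,z\in W$, since these four downsets are the values $\da y\wedge\da z$, $\da y$, $\da z$, $\da y\jn\da z$, and the condition only needs to hold on $J(D(\m{W}))$.

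For the forward direction (frame weak conservativity $\Rightarrow$ algebraic), I would read the frame biconditional as an explicit description of $R_{yz}$ and split into the four exhaustive cases according to whether $yRyz$ and $zRyz$ hold. When neither holds the right-hand side collapses to $\{w\mid w\le y,z\}=\da y\cap\da z$; when only $yRyz$ holds it becomes $\da y$ (which absorbs $\da y\cap\da z$); symmetrically $\da z$ when only $zRyz$ holds; and $\da y\cup\da z$ when both hold. Thus $R_{yz}$ is always one of the four admissible values, which is exactly algebraic weak conservativity.

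For the reverse direction I would verify the frame biconditional pointwise, assuming $R_{yz}\in\{\da y\cap\da z,\da y,\da z,\da y\cup\da z\}$. For the $(\Leftarrow)$ half, note that each admissible value contains $\da y\cap\da z$ and is a downset: so if $w\le y,z$ then $w\in\da y\cap\da z\subseteq R_{yz}$; if $w\le y$ with $yRyz$ then $y\in R_{yz}$ and downward closure gives $w\in R_{yz}$; and symmetrically for the third disjunct. For the $(\Rightarrow)$ half I would use the key observation that $y\in R_{yz}\iff yRyz$ and $z\in R_{yz}\iff zRyz$, then case split on which of the four sets $R_{yz}$ equals: if $R_{yz}=\da y\cap\da z$ then $wRyz$ forces $w\le y,z$; if $R_{yz}=\da y$ then $w\le y$ and, since $y\in\da y=R_{yz}$, also $yRyz$; the case $R_{yz}=\da z$ is symmetric; and if $R_{yz}=\da y\cup\da z$ then $w\le y$ or $w\le z$, each of which again forces $yRyz$ or $zRyz$ respectively. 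In every case one of the three disjuncts of the frame condition holds.

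The main obstacle is not any single calculation but setting up the right dictionary: recognizing that the truth values of the two ``diagonal'' statements $yRyz$ and $zRyz$ are exactly what selects among the four lattice terms $\da y\wedge\da z,\da y,\da z,\da y\jn\da z$. Once the identity $\da y\cdot\da z=R_{yz}$ and the equivalences $y\in R_{yz}\iff yRyz$ and $z\in R_{yz}\iff zRyz$ are in place, both implications reduce to the short case analyses above.
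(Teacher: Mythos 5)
Your proof is correct, and it takes a genuinely different route from the paper's. The paper first rewrites both notions of weak conservativity in conjunctive form---on the algebra side $x\mt y\le xy\le x\jn y$, ($xy\le x$ or $y\le xy$), ($xy\le y$ or $x\le xy$), and on the frame side the matching three-conjunct biconditional---and then verifies the equivalence conjunct by conjunct, identifying elements of $W$ with principal downsets and, notably, invoking the idempotence correspondence (Lemma~\ref{idem}) to obtain $xy\le x\jn y$ in the frame-to-algebra direction. You instead stay with the disjunctive form: after establishing the dictionary $\da y\cdot\da z=\{w\in W\mid wRyz\}$ (the same dictionary the paper uses implicitly via $wRxy\iff w\le xy$, justified by (R2) and (R3) exactly as you do), you observe that the frame biconditional determines this set outright as one of the four downsets $\da y\cap\da z$, $\da y$, $\da z$, $\da y\cup\da z$, selected by the truth values of the two diagonal statements $yRyz$ and $zRyz$, and conversely that knowing $\da y\cdot\da z$ lies in this four-element menu yields the biconditional by a short case split using $y\in R_{yz}\iff yRyz$ and downward closure of $R_{yz}$ under (R1). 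Your version is more self-contained (no appeal to Lemma~\ref{idem} or to the characterization of idempotence by $x\mt y\le xy\le x\jn y$) and makes transparent why exactly four values appear in the definition of weak conservativity; the paper's conjunctive normal forms, on the other hand, line up each algebraic inequality with a first-order frame condition in the style of the correspondence results elsewhere in Section 4, and reuse a lemma already on hand. Overlapping cases (e.g.\ when $y\le z$, so some of the four sets coincide) cause no harm in your case analysis, since each applicable case independently yields a valid disjunct, so there is no gap.
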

\begin{proof} We first note that weak conservativity for $D(\m W)$ can be written in conjunctive form as
\begin{equation}
x\wedge y\le xy\le x\vee y\text{ and }(xy\le x\text{ or }y\le xy)\text{ and }(xy\le y\text{ or }x\le xy).
\end{equation}
Likewise weak conservativity for $\m W$ in conjunctive form (on right hand side) is
\begin{equation}
wRxy\Leftrightarrow(w\le x\text{ or }w\le y)\text{ and }(w\le x\text{ or }yRxy)\text{ and }(w\le y\text{ or }xRxy).
\end{equation}
Now assume (1) and for $w,x,y\in W$ assume $wRxy$. To simplify notation, we identify elements of $W$ with their principal filters in $D(\m W)$. Since $w$ is join-irreducible and $xy\le x\vee y$, it follows that $w\le x$ or $w\le y$. Next, to prove that $w\le x\text{ or }yRxy$,
assume $w\nleq x$. Then $xy\nleq x$, and again $xy\le x\vee y$ implies $xy\le y$. The last conjunct is proved similarly.
Suppose now that the right hand side of (2) holds for $w,x,y\in W$. Using the original disjunctive form, there are 3 cases: if $w\le x,y$ then by (1) $w=w\wedge w\le ww\le xy$,
hence $wRxy$. If $w\le x\le Rxy$ then $w\le x\le xy$, so again we obtain $wRxy$. The third case is similar.

Conversely, assume (2) holds and let $x,y$ be join-irreducibles of $D(\m W)$. To see that $x\wedge y\le xy$, let $w$ be any join-irreducible such that $w\le x\wedge y$, in which case $wRxy$ follows from the disjunctive form of (2), hence $w\le xy$. Since weak conservativity of $D(\m W)$ implies idempotence, $xy\le x\vee y$ follows from Lemma~\ref{idem}. To prove that $xy\le x$ or $y\le xy$, assume $y\nleq xy$, whence $yRxy$ does not hold. For any join-irreducible $w\le xy$ we have $wRxy$, and since (2) implies $w\le x$ or $yRxy$, we conclude
that $xy\le x$. Finally, the conjunct $xy\le y$ or $x\le xy$ follows by symmetry of $x,y$.
\end{proof}

Next we show that in every weakly conservative Birkhoff frame the ternary relation
$R$ is determined by two binary relations $P,Q$ defined by $xPy\Leftrightarrow xRyx$ and $xQy\Leftrightarrow xRxy$. This is simpler than the previous definitions with an existential quantifier, but they need not be weakening relations, hence they do not produce a $PQ$-frame. Instead they are axiomatized by the following conditions.

A \emph{PQ-structure} is of the form $(W,\leq,P,Q)$ where $(W,\leq)$ is a poset and
\begin{center}
\begin{tabular}{ll}
(P0) $x\leq y \implies  xPy$&
(Q0) $x\leq y \implies  xQy$\\
(P1) $x\le y ~\&~ xPz \implies x\le z \text{ or } yPz\quad$&
(Q1) $x\le y ~\&~ xQz \implies x\le z \text{ or } yQz$\\
(P2) $xPy\leq z \implies xPz$&
(Q2) $xQy\leq z \implies xQz$.
\\
\end{tabular}
\end{center}
Note that (P0) and (Q0) together with reflexivity of $\le$ imply that both $P$ and $Q$ are reflexive.
The following result shows that $PQ$-structures and weakly conservative Birkhoff frames are
definitionally equivalent.
This generalizes an earlier result of \cite{AJ2020} where the partial order $\leq$ was assumed to be $=$.

\begin{thm}\label{PQwc} \hfill
\begin{enumerate}[label=\textup{(\arabic*)}]
	\item 
	For a $PQ$-structure $(W,\leq, P,Q)$ let $xRyz$ be defined by
	$x\leq y,z \text{ or } x\leq yQz$ or $x\leq zPy$. Then $(W,\leq,R)$ is a weakly conservative Birkhoff frame and $P,Q$ are recovered via
	$xPy\Leftrightarrow xRyx~\text{ and }~xQy\Leftrightarrow xRxy$.

	\item For a weakly conservative Birkhoff frame $(W,\leq,R)$ define
	$xPy\Leftrightarrow xRyx$ and $xQy\Leftrightarrow xRxy$. Then $(W,\leq,P,Q)$ is a $PQ$-structure and 
$xRyz\Leftrightarrow x\leq y, z \text{ or } x\leq yQz\text{ or } x\leq zPy$.
        \end{enumerate}
\end{thm}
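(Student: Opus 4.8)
The plan is to prove Theorem~\ref{PQwc} as a definitional equivalence by verifying each direction separately, checking that the defining formula for $R$ (resp. for $P,Q$) produces a structure of the required type and that the two translations are mutually inverse. Since $P,Q$ here are \emph{not} weakening relations, I cannot reuse the $PQ$-frame machinery of Theorem~\ref{equivframes}; instead I must work directly with the three-place defining disjunction $xRyz \Leftrightarrow x\le y,z \text{ or } x\le yQz \text{ or } x\le zPy$ and the axioms (P0)--(P2), (Q0)--(Q2).

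For part (1), I would first show $(W,\le,R)$ is a Birkhoff frame by checking (R1)--(R3). Downward closure in the first argument (R1) is immediate because every disjunct in the definition of $R$ begins with $x\le(\cdots)$, so replacing $x$ by a smaller element preserves each disjunct. Upward closure in the second and third arguments (R2, R3) is where the axioms do real work: if $xRyz$ via some disjunct and $y\le y'$, I need $xRy'z$, and this should follow from (P2)/(Q2) (upward closure of $P,Q$ in their second argument) together with monotonicity of $\le$ in the disjuncts $x\le y,z$ and $x\le yQz$, $x\le zPy$. Next I must verify weak conservativity of the frame, i.e. that $R$ satisfies $xRyz \Leftrightarrow x\le y,z \text{ or } x\le yRyz \text{ or } x\le zRyz$; here the key is to reconcile the \emph{definition} of $R$ (using $P,Q$) with this self-referential characterization, which requires expressing $yQz$ and $zPy$ back in terms of $R$ via the recovery formulas. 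Finally I must prove the recovery identities $xPy\Leftrightarrow xRyx$ and $xQy\Leftrightarrow xRxy$: substituting $z=x$ into the definition of $R$ gives $xRyx \Leftrightarrow x\le y,x \text{ or } x\le yQx \text{ or } x\le xPy$, and I would collapse this to $xPy$ using reflexivity, (P0), (Q0), and (P2)/(Q2)---for instance $x\le xPy$ together with (P2) or reflexivity should yield $xPy$, while the spurious disjuncts $x\le y$ and $x\le yQx$ must be shown to already imply $xPy$ via (P0) and the interplay of the axioms.

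For part (2), starting from a weakly conservative Birkhoff frame I define $P,Q$ by the same recovery formulas and must verify the six structural axioms. Reflexivity of $R$ in the form $xRxx$ (which weak conservativity supplies, since $x\le x,x$ forces $xRxx$) immediately gives (P0) and (Q0) after using (R2)/(R3) to absorb the order. The monotonicity axioms (P2)/(Q2) should follow directly from (R2)/(R3) of the Birkhoff frame. The cancellation-style axioms (P1)/(Q1) are the ones I expect to require the full strength of weak conservativity: from $x\le y$ and $xPz$ (i.e. $xRzx$) I need $x\le z$ or $yPz$, and unwinding $xRzx$ through the weakly conservative characterization of $R$ should expose exactly these two alternatives. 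Then I must re-derive $xRyz \Leftrightarrow x\le y,z \text{ or } x\le yQz \text{ or } x\le zPy$ from weak conservativity, which is essentially translating the frame's own weak-conservativity formula (stated with $yRyz$, $zRyz$) into the $P,Q$ notation via the definitions $xQy\Leftrightarrow xRxy$ and $xPy\Leftrightarrow xRyx$.

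The main obstacle I anticipate is the careful bookkeeping in the equivalences (P1)/(Q1) and in matching the two distinct formulations of weak conservativity---the frame's self-referential one using $R$ alone versus the one defining $R$ from $P,Q$. Because $P$ and $Q$ are genuinely not weakening relations, every step must be justified from the explicit axioms rather than from general duality, and I must be vigilant that the asymmetry between $P$ (recovered as $xRyx$, so $y$ in the \emph{second} slot) and $Q$ (recovered as $xRxy$, with $y$ in the \emph{third} slot) is tracked correctly; a likely pitfall is conflating which of $P,Q$ governs which argument when collapsing the disjunctions. The rest of the argument is routine monotonicity and substitution, so I expect the proof to be a sequence of short case analyses rather than any single deep computation.
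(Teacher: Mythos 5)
Your overall decomposition is the same as the paper's (verify (R1)--(R3), establish the recovery formulas, from which weak conservativity of $R$ follows by substitution; conversely verify (P0)--(P2) and (Q0)--(Q2) by expanding $R$), but in both of the places where the axioms must ``do real work'' the mechanism you describe is the wrong one and the step would fail. In part (1) you claim that (R2)/(R3) follow from (P2)/(Q2) plus monotonicity. They do not: (P1)/(Q1) are indispensable. For (R2), if $xRyz$ holds via the disjunct $x\le y\ \&\ yQz$ and $y\le w$, then $y$ occupies the \emph{first} argument of $Q$, where $Q$ is not upward closed ((Q2) only governs the second argument), so nothing you have listed produces $xRwz$. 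The paper's proof applies exactly the substitution instance of (Q1): $yQz\ \&\ y\le w \Rightarrow y\le z \text{ or } wQz$; each alternative then gives a disjunct of $xRwz$. The failure is genuine, not just presentational: take $W=\{x,y,w,z\}$ with $x<y<w$ and $z$ incomparable to all, $P={\le}$ and $Q={\le}\cup\{(y,z)\}$; then (P0), (Q0), (P2), (Q2) all hold, $xRyz$ holds via $x\le yQz$, but $xRwz$ fails, so (R2) fails. Symmetrically, (R3) needs (P1).

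In part (2) the same issue recurs for (P1)/(Q1). You propose to get (P1) by ``unwinding $xRzx$'' through the weak-conservativity characterization, but that unwinding is vacuous: $xRzx$ expands to $x\le z,x$ or $x\le zQx$ or $x\le xPz$, which simplifies to $x\le z$ or $xPz$ --- just your hypothesis back again, with no occurrence of $y$, so it cannot yield the required conclusion $x\le z$ or $yPz$. The missing idea (and the crux of the paper's argument) is to first apply (R3) to $xRzx$ with $x\le y$, obtaining $xRzy$, and unwind \emph{that}: $x\le z,y$ or $x\le zQy$ or $x\le yPz$, which, since $x\le y$ and since $x\le zQy$ contains $x\le z$ as a conjunct, collapses to $x\le z$ or $yPz$. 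The remainder of your plan --- (R1), the recovery formulas $xRyx\Leftrightarrow xPy$ and $xRxy\Leftrightarrow xQy$, the derivation of (P0)/(Q0) from $xRxx$ together with (R2)/(R3), and of (P2)/(Q2) directly from (R2)/(R3) --- is correct and matches the paper's proof (in the last case it even streamlines the paper's slightly redundant expansion of $xRzx$).
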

\begin{proof}
(1) Assume $(W,\le,P,Q)$ is a $PQ$-structure and let $R$ be defined as above. We need to prove that $R$ satisfies (R1), (R2) and (R3).

(R1) Assume $xRyz$ and $w\le x$. Then $wRyz$ follows from (the expanded form of) $xRyz$ by transitivity of $\le$.

(R2) Assume $xRyz$ and $y\le w$.
By assumption $x\leq y, z \text{ or } x\leq yQz\text{ or } x\leq zPy$, so we have 3 subcases.
In the first subcase $x\leq y$ implies $x\leq w$ by transitivity of $\le$, hence $x\le w,z$.

In the second subcase $x\leq yQz$. We also have $y\le w$, hence $x\le w$.
A substitution instance of (Q1) is $yQz ~\&~ y\leq w \Rightarrow y\le z \text{ or } wQz$,
hence $y\le z \text{ or } wQz$. From the assumption that $x\le y$ it follows that $x\le z \text{ or } wQz$. Since $x\le w$ holds we obtain $x\le w,z \text{ or } x\le wQz$.
Therefore $xRwz$ holds.
In the third subcase $x\leq z Py$ implies $x\leq zPw$ by (P2), and again $xRyz$ holds.

(R3) The argument is symmetric to the one for (R2).

To prove that $R$ is weakly conservative, we show that $xPy\Leftrightarrow xRyx$ and $xQy\Leftrightarrow xRxy$.
Now $xRyx$ is equivalent to $x\leq y, x \text{ or } x\leq yQx\text{ or } x\leq xPy$ which simplifies to $x\le y \text{ or } xPy$, and by (P0) this is equivalent to $xPy$.
Similarly $xRxy$ is equivalent to $xQy$.

(2) Assume $(W,\le,R)$ is a weakly conservative Birkhoff frame and define $xPy\Leftrightarrow xRyx$ and $xQy\Leftrightarrow xRxy$. We show that (P0--P2) hold, and the arguments for (Q0--Q2) are similar.
Note that weak conservativity of $R$ is equivalent to $xRyz\Leftrightarrow x\leq y, z \text{ or } x\leq yQz\text{ or } x\leq zPy.$

(P0) Assume $x\le y$. From reflexivity of $\le$ it follows that $x\le y,x$. This implies that $xRyx$ holds, and hence $xPy$.

(P1) Assume $x\le y$ and $xPz$. Then $xRzx$ holds, and (R3) implies $xRzy$, or equivalently
$x\leq z, y \text{ or } x\leq zQy \text{ or } x\leq yPz$. Since $x\le y$, this disjunction simplifies
to $x\leq z \text{ or } x\leq zQy \text{ or } yPz$. Since $x\leq z$ is a conjunct of the middle part,
the formula simplifies to $x\le z$ or $yPz$.

(P2) Assume $xPy\le z$. This is equivalent to $xRyx$ and $y\le z$, so by (R2) $xRzx$ follows. This is equivalent to $x\leq z, x \text{ or } x\leq zQx\text{ or } x\leq xPz$, which simplifies by reflexivity of $\le$ to $x\le z$ or $xPz$, and further by (P0) to $xPz$.
\end{proof}

Conditions (P2) and (Q2) ensure that $P,Q$ are ``half-weakening relations''.
Hence a $PQ$-structure is a $PQ$-frame if and only if it satisfies the other half
\begin{center}
\begin{tabular}{ll}
(P2') $x\le yPz \implies xPz\qquad\qquad$&(Q2') $x\le yQz \implies xQz$.
\end{tabular}
\end{center}
It follows from Theorems~\ref{equivframes} and \ref{PQwc} that in a $PQ$-structure that is also a $PQ$-frame, the weakly conservative ternary relation $R$ can be defined in two equivalent ways: as $(xPy\ \&\ x\le z)\text{ or }(x\le y\ \&\ xQz)$ and as $x\leq y,z \text{ or } x\leq yQz$ or $x\leq zPy$.

\section{Counting preorder forests and linear \texorpdfstring{$P$}{P}-frames}

In the case when the poset $(W,\le)$ is an antichain, a preorder forest $P$ is simply a preorder $P\subseteq W^2$ such that $xPy$ and $xPz$ implies $yPz$ or $zPy$.
A \emph{preorder tree} is a connected component of a preorder forest.
A \emph{rooted} preorder forest is defined to have an equivalence class of $P$-maximal elements in each component. For finite preorder forests this is always the case. Let $F_n$ denote the number of preorder forests and $T_n$ the number of preorder trees with $n$ elements (up to isomorphism). We also let $F_0=1$.

A preorder forest \emph{has singleton roots} if the $P$-maximal equivalence class of each component is a singleton set. The number of preorder forests and trees with singleton roots is denoted by $F^s_n$ and $T^s_n$ respectively.

Note that every preorder forest gives rise to a unique preorder tree with a singleton root by adding one new element $r$ such that for all $x\in W$ we have
$xPr$. It follows that $T^s_n=F_{n-1}$.

\begin{table}
\begin{center}\tabcolsep3pt
\begin{tabular}{|rccccccc|}\hline
cardinality $n=$        & 1 & 2 & 3 & 4 & 5 & 6 & 7 \\ \hline
preorder trees $T_n=$   & 1 & 2 & 5 & 13 & 37 & 108 & 337\\
               $c_n=$   & 1 & 5 &16 & 57 &186 & 668 &\\
preorder forests $F_n=$ & 1 & 3 & 8 & 24 & 71 & 224 &\\
preorder trees with singleton roots $T^s_n=$  & 1 & 1 & 3 &  8 & 24 & 71 &224\\
               $c^s_n=$   & 1 & 3 & 10 & 35 & 121 & 438 &\\
preorder forests with singleton roots $F^s_n=$& 1 & 2 & 5 & 14 & 41 & 127 &\\ \hline
\end{tabular}
\bigskip
\end{center}
\caption{Number of preorder trees and forests (up to isomorphism)}
\label{fig:table}
\end{table}

Every preorder tree with a non-singleton root equivalence class
and $n$ elements is obtained from a preorder tree with $n-1$ elements by adding one more element to the root equivalence class. Hence for $n>0$ we have $T_n=F_{n-1}+T_{n-1}$. The Euler transform of $T_n$ is used to calculate the
next value of $F_n$ as follows:
\begin{align*}
c_n&=\sum_{d|n}d\cdot T_n\qquad\qquad
F_n=\frac1n\sum_{k=1}^n c_k\cdot F_{n-k}.
\end{align*}
Since preorder forests with singleton roots are disjoint unions of preorder trees with singleton roots, $F^s_n$ is calculated by an Euler transform from $T^s_n$.

\begin{cor}
The sequence $F^s_n$ is the Euler transform of $T^s_n$.
\end{cor}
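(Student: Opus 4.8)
The plan is to exhibit a size-preserving bijection between (isomorphism classes of) preorder forests with singleton roots and finite multisets of (isomorphism classes of) preorder trees with singleton roots, and then to observe that the Euler transform is, by definition, the generating-function operation that counts such multisets. First I would make precise the decomposition asserted in the sentence preceding the statement: given a finite preorder forest with singleton roots $\m W=(W,\le,P)$, its connected components (the classes of the equivalence relation generated by $P$, which contains $\le$ since $P$ is reflexive) partition $W$, and each component, equipped with the restrictions of $\le$ and $P$, is by definition a preorder tree; since the $P$-maximal equivalence class of each component is a singleton, each such tree has a singleton root.

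Next I would verify that this decomposition induces a genuine bijection on isomorphism classes. In the forward direction, any isomorphism of forests preserves $P$ and hence maps components to components, so it restricts to a componentwise isomorphism; thus isomorphic forests yield the same multiset of tree-isomorphism-types, and conversely a type-matching of components can be assembled into a forest isomorphism. In the reverse direction, the disjoint union of any finite multiset of preorder trees with singleton roots is again a preorder forest with singleton roots: reflexivity and transitivity of $P$ are inherited componentwise, each root remains a singleton, and the (Pforest) axiom holds because its hypotheses $xPy$ and $xPz$ force $x,y,z$ into a single component (where the axiom holds by assumption) and are otherwise vacuous across distinct components, as there are no $P$-edges between them.

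Finally, with this bijection in hand, the claim reduces to a standard combinatorial fact: if $T^s_k$ counts the atoms (trees with singleton root) of size $k$, then the number of finite multisets of atoms of total size $n$ is the coefficient of $x^n$ in
$$
\prod_{k\ge 1}(1-x^k)^{-T^s_k},
$$
which is exactly the Euler transform of the sequence $(T^s_k)$. Matching the empty forest to the empty multiset gives the constant term $1$, consistent with the convention $F^s_0=1$, so $F^s_n$ is precisely the Euler transform of $T^s_n$. I expect the only point requiring genuine care to be the bookkeeping of this last step, namely confirming that ``disjoint union of trees'' corresponds precisely to the multiset construction whose enumeration is the Euler transform; the structural decomposition into components and the preservation of the defining conditions under disjoint union are immediate from the definitions of \emph{component}, \emph{preorder tree}, and \emph{singleton root}.
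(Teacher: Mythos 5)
Your proposal is correct and follows the same route as the paper: the paper justifies the corollary in one line by noting that preorder forests with singleton roots are exactly disjoint unions of preorder trees with singleton roots, so the multiset-counting interpretation of the Euler transform applies, and your argument simply makes this decomposition, the induced bijection on isomorphism classes, and the preservation of the (Pforest) axiom under disjoint union explicit. The only difference is level of detail, not substance.
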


While it is difficult to count preorder forest $P$-frames in general, it is simple to count the linear ones. Note that the (Pforest) axiom is actually redundant for linearly ordered $P$-frames.

\begin{thm}\label{chains}
There are $2^{n-1}$ linearly ordered forest $P$-frames. In the algebraic setting, for $n>1$, there are $2^{n-2}$ unary-determined commutative doubly idempotent linear semirings with $n$ elements, and $n-1$ of them have an identity element.
\end{thm}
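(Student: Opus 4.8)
The plan is to deduce all three counts from the combinatorics of linear forest $P$-frames, using the frame--algebra duality of Section~4. First I would note that by this duality, Theorem~\ref{assoc}, and Lemmas~\ref{pqprops} and \ref{frameprops}, the finite unary-determined $d\ell$-semilattices whose lattice is a chain are in bijection (up to isomorphism) with the preorder forest $P$-frames whose poset is a chain: reflexivity of $P$ corresponds to idempotence (Lemma~\ref{pqprops}(3), Lemma~\ref{frameprops}(1)), the closure identity $\p\p a\le\p a$ --- forced by associativity and idempotence, since then $\p x=x\cdot\top$ is a closure operator --- to transitivity of $P$ (Lemma~\ref{frameprops}(2)), and $\p=\q$ to commutativity (Lemma~\ref{pqprops}(1)), while Theorem~\ref{assoc} supplies associativity in the converse direction and (Pforest) is vacuous on a chain. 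These $d\ell$-semilattices are exactly the unary-determined commutative doubly idempotent linear semirings of the statement (on a chain $\wedge$ is recovered from the join order, so the semiring reduct loses no information). Since an $n$-element chain has $n-1$ completely join-irreducibles forming an $(n-1)$-chain, an $n$-element such semiring corresponds to an $(n-1)$-element linear forest $P$-frame. Thus the first claim reduces to counting linear forest $P$-frames on an $n$-chain, and the second and third to frames on an $(n-1)$-chain.

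Second, I would establish a normal form: a linear forest $P$-frame on $W=\{w_1<\cdots<w_m\}$ is the same thing as an ordered partition of $W$ into consecutive intervals (blocks). Since $P$ is reflexive and a weakening relation, ${\le}\subseteq P$, so $P$ is a total preorder. The crucial point is that its blocks (the classes of its symmetric part) are intervals: if $xPz$, $zPx$ and $x<y<z$, then $x\le y$ gives $xPy$, while $y\le z$ together with $zPx$ and weakening gives $yPx$, so $y$ lies in the block of $x$; and between distinct blocks only the $\le$-direction of $P$ can hold. Hence $P$ is completely determined by the partition, and conversely every ordered interval partition defines a weakening preorder (so a valid frame, with (Pforest) automatic). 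Ordered partitions of an $m$-chain into consecutive intervals biject with subsets of the $m-1$ gaps between consecutive elements, giving $2^{m-1}$ frames. Taking $m=n$ yields the first claim and $m=n-1$ the second.

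Third, for the identity count I would apply Lemma~\ref{E} to a frame with blocks $I_1<\cdots<I_k$. Computing the forced identity set $S=\{x\mid\forall y(xPy\Rightarrow x\le y)\}$, I would show $S=\{\min I_1,\dots,\min I_k\}$: for $x\in I_a$ the condition $x\le y$ holds automatically when $y$ lies in a higher block and is vacuous when $y$ lies in a lower block, so it reduces to $x$ being least in its own block. As $\min I_k$ lies in the top block, $xP\min I_k$ for every $x$, so $\p S=W$ always holds; hence by Lemma~\ref{E} the algebra has an identity if and only if $S$ is a downset. Finally I would check that $S$ is a downset exactly when $I_1,\dots,I_{k-1}$ are singletons (otherwise a non-minimal element of an earlier block lies below $\min I_b$ for a later block $I_b$ yet outside $S$), and that such partitions are parametrized by the starting position $j\in\{1,\dots,m\}$ of the last block, giving exactly $m$ of them. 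With $m=n-1$ this is the claimed $n-1$.

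The step I expect to be the main obstacle is the normal-form result in the second paragraph --- verifying that the blocks of a linear forest $P$-frame are forced to be intervals (getting the direction of the weakening closure right), that no cross-block $P$-edges beyond those dictated by $\le$ can occur, and that conversely every ordered interval partition really yields a weakening preorder. Once the bijection with ordered interval partitions is in hand, the two powers of two and the identity analysis are routine bookkeeping.
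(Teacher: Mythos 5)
Your proposal is correct and takes essentially the same route as the paper: both reduce the algebra counts to linearly ordered preorder $P$-frames on an $(n-1)$-element chain via the duality and Theorem~\ref{assoc} (with (Pforest) noted to be automatic on chains), and both parametrize such frames by subsets of the $n-1$ adjacent gaps --- your ordered-interval-partition normal form is exactly the paper's ``transitive closure of a chosen set of edges $(k+1,k)$'' in different clothing, just with the block-interval verification spelled out rather than asserted. The only divergence is the identity count, which you carry out on the frame side via Lemma~\ref{E} (the candidate identity set is the set of block minima, a downset precisely when all non-top blocks are singletons), whereas the paper argues on the algebra side via Lemma~\ref{pqprops}(5) (the identity is the minimum non-closed element, with everything below it closed); these analyses are equivalent and both yield $n-1$.
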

\begin{proof}
Let $\m W$ be a linearly ordered $P$-frame with elements $W=\{1<2<\dots <n\}$ such that $P$ is transitive and (P0) holds. Then each possible relation $P$ on $W$ is determined by choosing a subset $S$ of the edges $\{(2,1), (3,2), \ldots,(n,n-1)\}$ and defining $P$ to be the transitive closure of $S\cup{\le}$. Since there are $n-1$ such edges to choose from, the number of $P$-frames is $2^{n-1}$. 

Let $\m A$ be a unary-determined commutative doubly idempotent linear semiring with $n$ elements. Then the $P$-frame $\m W$ associated with $\m A$ has $n-1$ elements, is linearly ordered, and $P$ is reflexive and transitive since $\cdot$ is idempotent and associative. Hence there are $2^{n-2}$ such algebras.

By Lemma~\ref{pqprops} such an algebra $\m A$ will have an identity $1$ if and only if the operator $\p $ in the corresponding d$\ell \p$-closure algebra satisfies the conditions $\p 1 = \top$ and $\p x \wedge 1 \leq x$ for every $x \in A$. The first condition means that $1$ is not closed (unless it is $\top$), and there are no closed elements other than $\top$ above $1$. Since the partial order is a linear order and $\p $ is inflationary, the second condition is equivalent to $\p x = x$ or $1 \leq x$. That is to say, $1$ is also the minimum non-closed element in $\m A$. Hence the $n$-element unary-determined commutative doubly idempotent linear semirings with identity are the chains with the identity element in the $k$-th position, where $1 < k \leq n$, with every element below $1$ closed and every element $\geq 1$ either non-closed or equal to $\top$. Such semirings are uniquely identified by the position of the identity element, which can never be $\bot$. There are $n-1$ possible positions, and hence $n-1$ semirings with an identity element.
\end{proof}

\section{Subdirectly irreducible \texorpdfstring{$d\ell \p$}{dlp}-algebras and unary-determined BI-chains}
Let $\mathcal V$ be a variety (= equational class) of unary-determined $d\ell$-magmas. Recall that an algebra $\m A$ is \emph{subdirectly irreducible} if its congruence lattice Con$\m A$ has a unique minimal nontrivial congruence, and $\m A$ is \emph{simple} if
Con$\m A$ has exactly two elements. By Birkhoff's subdirect representation theorem every algebra is (subdirectly) embedded in a product of subdirectly irreducible factors, hence $\mathcal V=\mathbb{ISP}(SI(\mathcal V))$ where $SI(\mathcal V)$ is the class of all subdirectly irreducible members of $\mathcal V$ and $\mathbb I, \mathbb S, \mathbb P$ are the class operators that return all isomorphic copies, all subalgebras and all products of members of their input class.

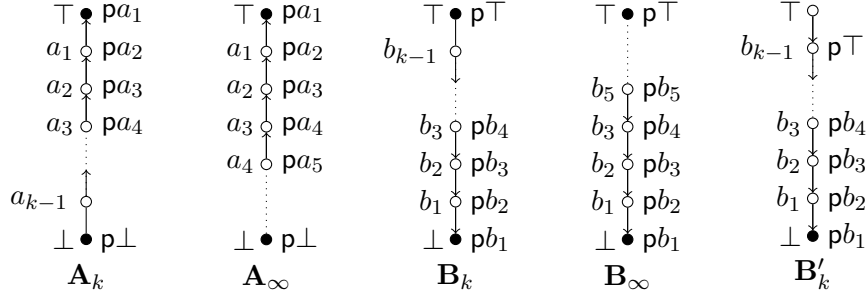
\begin{figure}
	\tikzstyle{every picture} = [scale=.5]
	\tikzstyle{every node} = [draw, fill=white, ellipse, inner sep=0pt, minimum size=4pt]
	\tikzstyle{n} = [draw=none, rectangle, inner sep=2pt] 
	\tikzstyle{c} = [draw, fill=black, circle, inner sep=0pt, minimum size=4pt]
    \centering
		\begin{tikzpicture}
		\node at (0,-1)[n]{$\m A_k$};
		\node(5)at(0,6)[c,label=left:$\top$,label=right:$\p a_1$]{};
		\node(4)at(0,5)[label=left:$a_{1}$,label=right:$\p a_2$]{}edge(5);
		\node(3)at(0,4)[label=left:$a_{2}$,label=right:$\p a_3$]{}edge(4);
		\node(2)at(0,3)[label=left:$a_{3}$,label=right:$\p a_4$]{}edge(3);
		\node(12)at(0,2)[n]{};
		\node(1)at(0,1)[label=left:$a_{k-1}$]{}edge[dotted](2);
		\node(0)at(0,0)[c,label=left:$\bot$,label=right:$\p\bot$]{}edge(1);
		\draw[->](1)--(12);
		\draw[->](2)--(3);
		\draw[->](3)--(4);
		\draw[->](4)--(5);
		\end{tikzpicture}
		\quad\quad
		\begin{tikzpicture}
		\node at (0,-1)[n]{$\m A_\infty$};
		\node(5)at(0,6)[c,label=left:$\top$,label=right:$\p a_1$]{};
		\node(4)at(0,5)[label=left:$a_{1}$,label=right:$\p a_2$]{}edge(5);
		\node(3)at(0,4)[label=left:$a_{2}$,label=right:$\p a_3$]{}edge(4);
		\node(2)at(0,3)[label=left:$a_{3}$,label=right:$\p a_4$]{}edge(3);
		\node(1)at(0,2)[label=left:$a_{4}$,label=right:$\p a_5$]{}edge(2);
		\node(0)at(0,0)[c,label=left:$\bot$,label=right:$\p\bot$]{}edge[dotted](1);
		\draw[->](1)--(2);
		\draw[->](2)--(3);
		\draw[->](3)--(4);
		\draw[->](4)--(5);
		\end{tikzpicture}
		\quad
		\begin{tikzpicture}
		\node at (0,-1)[n]{$\m B_k$};
		\node(5)at(0,6)[c,label=left:$\top$,label=right:$\p\top$]{};
		\node(4)at(0,5)[label=left:$b_{k-1}$]{}edge(5);
		\node(34)at(0,4)[n]{};
		\node(3)at(0,3)[label=left:$b_{3}$,label=right:$\p b_4$]{}edge[dotted](4);
		\node(2)at(0,2)[label=left:$b_{2}$,label=right:$\p b_3$]{}edge(3);
		\node(1)at(0,1)[label=left:$b_1$,label=right:$\p b_2$]{}edge(2);
		\node(0)at(0,0)[c,label=left:$\bot$,label=right:$\p b_1$]{}edge(1);
		\draw[->](4)--(34);
		\draw[->](3)--(2);
		\draw[->](2)--(1);
		\draw[->](1)--(0);
		\end{tikzpicture}
		\quad\quad
		\begin{tikzpicture}
		\node at (0,-1)[n]{$\m B_\infty$};
		\node(5)at(0,6)[c,label=left:$\top$,label=right:$\p\top$]{};
		\node(4)at(0,4)[label=left:$b_{5}$,label=right:$\p b_5$]{}edge[dotted](5);
		\node(3)at(0,3)[label=left:$b_{3}$,label=right:$\p b_4$]{}edge(4);
		\node(2)at(0,2)[label=left:$b_{2}$,label=right:$\p b_3$]{}edge(3);
		\node(1)at(0,1)[label=left:$b_1$,label=right:$\p b_2$]{}edge(2);
		\node(0)at(0,0)[c,label=left:$\bot$,label=right:$\p b_1$]{}edge(1);
		\draw[->](4)--(3);
		\draw[->](3)--(2);
		\draw[->](2)--(1);
		\draw[->](1)--(0);
		\end{tikzpicture}
		\quad
		\begin{tikzpicture}
		\node at (0,-1)[n]{$\m B'_k$};
		\node(5)at(0,6)[label=left:$\top$]{};
		\node(4)at(0,5)[label=left:$b_{k-1}$,label=right:$\p\top$]{}edge(5);
		\node(34)at(0,4)[n]{};
		\node(3)at(0,3)[label=left:$b_{3}$,label=right:$\p b_4$]{}edge[dotted](4);
		\node(2)at(0,2)[label=left:$b_{2}$,label=right:$\p b_3$]{}edge(3);
		\node(1)at(0,1)[label=left:$b_1$,label=right:$\p b_2$]{}edge(2);
		\node(0)at(0,0)[c,label=left:$\bot$,label=right:$\p b_1$]{}edge(1);
		\draw[->](5)--(4);
		\draw[->](4)--(34);
		\draw[->](3)--(2);
		\draw[->](2)--(1);
		\draw[->](1)--(0);
		\end{tikzpicture}
    \caption{Subdirectly irreducible $d\ell \p$-chains (black elements satisfy $\p x=x$).}
    \label{fig:sidlpch}
\end{figure}

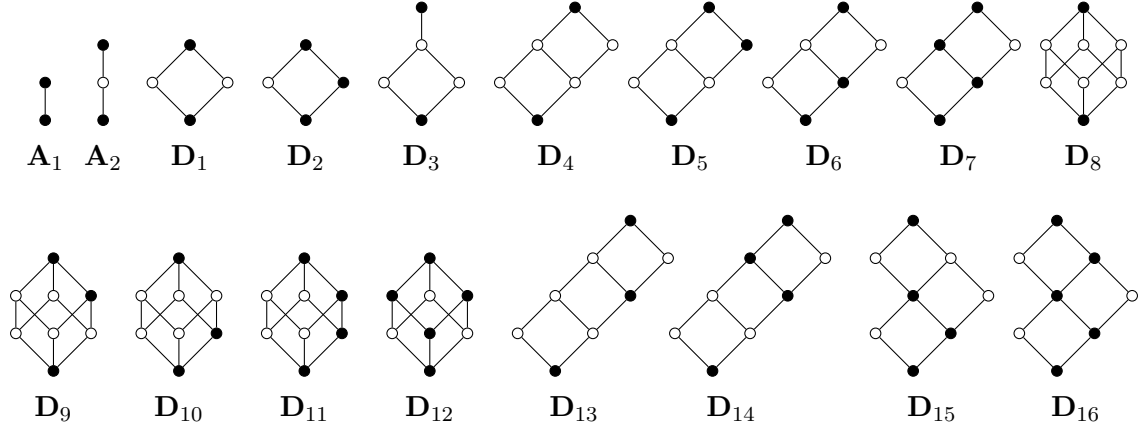
\begin{figure}
	\tikzstyle{every picture} = [scale=.5]
	\tikzstyle{every node} = [draw, fill=white, ellipse, inner sep=0pt, minimum size=4pt]
	\tikzstyle{n} = [draw=none, rectangle, inner sep=2pt] 
	\tikzstyle{c} = [draw, fill=black, circle, inner sep=0pt, minimum size=4pt]
    \centering
		\begin{tikzpicture}
		\node at (0,-1)[n]{$\m A_1$};
		\node(1) at (0,1)[c]{};
		\node(0) at (0,0)[c]{};
		\draw(0) edge (1);
		\end{tikzpicture}
		\begin{tikzpicture}
		\node at (0,-1)[n]{$\m A_2$};
		\node(2) at (0,2)[c]{};
		\node(1) at (0,1){};
		\node(0) at (0,0)[c]{};
		\draw(1) edge (2);
		\draw(0) edge (1);
		\end{tikzpicture}
        \
		\begin{tikzpicture}
		\node at (0,-1)[n]{$\m D_1$};
		\node(3) at (0,2)[c]{};
		\node(2) at (1,1){};
		\node(1) at (-1,1){};
		\node(0) at (0,0)[c]{};
		\draw(2) edge (3);
		\draw(1) edge (3);
		\draw(0) edge (1) edge (2);
		\end{tikzpicture}
        \ \ 
		\begin{tikzpicture}
		\node at (0,-1)[n]{$\m D_2$};
		\node(3) at (0,2)[c]{};
		\node(2) at (1,1)[c]{};
		\node(1) at (-1,1){};
		\node(0) at (0,0)[c]{};
		\draw(2) edge (3);
		\draw(1) edge (3);
		\draw(0) edge (1) edge (2);
		\end{tikzpicture}
		\ \ 
		\begin{tikzpicture}
		\node at (0,-1)[n]{$\m D_3$};
		\node(4) at (0,3)[c]{};
		\node(3) at (0,2){};
		\node(2) at (1,1){};
		\node(1) at (-1,1){};
		\node(0) at (0,0)[c]{};
		\draw(3) edge (4);
		\draw(2) edge (3);
		\draw(1) edge (3);
		\draw(0) edge (1) edge (2);
		\end{tikzpicture}
		\ \ 
		\begin{tikzpicture}
		\node at (0.5,-1)[n]{$\m D_4$};
		\node(5) at (1,3)[c]{};
		\node(4) at (2,2){};
		\node(3) at (0,2){};
		\node(2) at (1,1){};
		\node(1) at (-1,1){};
		\node(0) at (0,0)[c]{};
		\draw(4) edge (5);
		\draw(3) edge (5);
		\draw(2) edge (3) edge (4);
		\draw(1) edge (3);
		\draw(0) edge (1) edge (2);
		\end{tikzpicture}
		\begin{tikzpicture}
		\node at (0.5,-1)[n]{$\m D_5$};
		\node(5) at (1,3)[c]{};
		\node(4) at (2,2)[c]{};
		\node(3) at (0,2){};
		\node(2) at (1,1){};
		\node(1) at (-1,1){};
		\node(0) at (0,0)[c]{};
		\draw(4) edge (5);
		\draw(3) edge (5);
		\draw(2) edge (3) edge (4);
		\draw(1) edge (3);
		\draw(0) edge (1) edge (2);
		\end{tikzpicture}
		\begin{tikzpicture}
		\node at (0.5,-1)[n]{$\m D_6$};
		\node(5) at (1,3)[c]{};
		\node(4) at (2,2){};
		\node(3) at (0,2){};
		\node(2) at (1,1)[c]{};
		\node(1) at (-1,1){};
		\node(0) at (0,0)[c]{};
		\draw(4) edge (5);
		\draw(3) edge (5);
		\draw(2) edge (3) edge (4);
		\draw(1) edge (3);
		\draw(0) edge (1) edge (2);
		\end{tikzpicture}
		\begin{tikzpicture}
		\node at (0.5,-1)[n]{$\m D_7$};
		\node(5) at (1,3)[c]{};
		\node(4) at (2,2){};
		\node(3) at (0,2)[c]{};
		\node(2) at (1,1)[c]{};
		\node(1) at (-1,1){};
		\node(0) at (0,0)[c]{};
		\draw(4) edge (5);
		\draw(3) edge (5);
		\draw(2) edge (3) edge (4);
		\draw(1) edge (3);
		\draw(0) edge (1) edge (2);
		\end{tikzpicture}
		\
		\begin{tikzpicture}
		\node at (0,-1)[n]{$\m D_8$};
		\node(7) at (0,3)[c]{};
		\node(6) at (1,2){};
		\node(5) at (0,2){};
		\node(4) at (-1,2){};
		\node(3) at (1,1){};
		\node(2) at (0,1){};
		\node(1) at (-1,1){};
		\node(0) at (0,0)[c]{};
		\draw(6) edge (7);
		\draw(5) edge (7);
		\draw(4) edge (7);
		\draw(3) edge (5) edge (6);
		\draw(2) edge (4) edge (6);
		\draw(1) edge (4) edge (5);
		\draw(0) edge (1) edge (2) edge (3);
		\end{tikzpicture}
		\quad
		\begin{tikzpicture}
		\node at (0,-1)[n]{$\m D_9$};
		\node(7) at (0,3)[c]{};
		\node(6) at (1,2)[c]{};
		\node(5) at (0,2){};
		\node(4) at (-1,2){};
		\node(3) at (1,1){};
		\node(2) at (0,1){};
		\node(1) at (-1,1){};
		\node(0) at (0,0)[c]{};
		\draw(6) edge (7);
		\draw(5) edge (7);
		\draw(4) edge (7);
		\draw(3) edge (5) edge (6);
		\draw(2) edge (4) edge (6);
		\draw(1) edge (4) edge (5);
		\draw(0) edge (1) edge (2) edge (3);
		\end{tikzpicture}
		\quad
		\begin{tikzpicture}
		\node at (0,-1)[n]{$\m D_{10}$};
		\node(7) at (0,3)[c]{};
		\node(6) at (1,2){};
		\node(5) at (0,2){};
		\node(4) at (-1,2){};
		\node(3) at (1,1)[c]{};
		\node(2) at (0,1){};
		\node(1) at (-1,1){};
		\node(0) at (0,0)[c]{};
		\draw(6) edge (7);
		\draw(5) edge (7);
		\draw(4) edge (7);
		\draw(3) edge (5) edge (6);
		\draw(2) edge (4) edge (6);
		\draw(1) edge (4) edge (5);
		\draw(0) edge (1) edge (2) edge (3);
		\end{tikzpicture}
		\quad
		\begin{tikzpicture}
		\node at (0,-1)[n]{$\m D_{11}$};
		\node(7) at (0,3)[c]{};
		\node(6) at (1,2)[c]{};
		\node(5) at (0,2){};
		\node(4) at (-1,2){};
		\node(3) at (1,1)[c]{};
		\node(2) at (0,1){};
		\node(1) at (-1,1){};
		\node(0) at (0,0)[c]{};
		\draw(6) edge (7);
		\draw(5) edge (7);
		\draw(4) edge (7);
		\draw(3) edge (5) edge (6);
		\draw(2) edge (4) edge (6);
		\draw(1) edge (4) edge (5);
		\draw(0) edge (1) edge (2) edge (3);
		\end{tikzpicture}
		\quad
		\begin{tikzpicture}
		\node at (0,-1)[n]{$\m D_{12}$};
		\node(7) at (0,3)[c]{};
		\node(6) at (1,2)[c]{};
		\node(5) at (0,2){};
		\node(4) at (-1,2)[c]{};
		\node(3) at (1,1){};
		\node(2) at (0,1)[c]{};
		\node(1) at (-1,1){};
		\node(0) at (0,0)[c]{};
		\draw(6) edge (7);
		\draw(5) edge (7);
		\draw(4) edge (7);
		\draw(3) edge (5) edge (6);
		\draw(2) edge (4) edge (6);
		\draw(1) edge (4) edge (5);
		\draw(0) edge (1) edge (2) edge (3);
		\end{tikzpicture}
		\quad
		\begin{tikzpicture}
		\node at (0.5,-1)[n]{$\m D_{13}$};
		\node at (0,5)[n]{$ $};
		\node(7) at (2,4)[c]{};
		\node(6) at (3,3){}edge(7);
		\node(5) at (1,3){}edge(7);
		\node(4) at (2,2)[c]{}edge(5)edge(6);
		\node(3) at (0,2){}edge(5);
		\node(2) at (1,1){}edge(3)edge(4);
		\node(1) at (-1,1){}edge(3);
		\node(0)at(0,0)[c]{}edge(1)edge(2);
		\end{tikzpicture}
		\!\!\!\!\!
		\begin{tikzpicture}
		\node at (0.5,-1)[n]{$\m D_{14}$};
		\node at (0,5)[n]{$ $};
		\node(7) at (2,4)[c]{};
		\node(6) at (3,3){}edge(7);
		\node(5) at (1,3)[c]{}edge(7);
		\node(4) at (2,2)[c]{}edge(5)edge(6);
		\node(3) at (0,2){}edge(5);
		\node(2) at (1,1){}edge(3)edge(4);
		\node(1) at (-1,1){}edge(3);
		\node(0)at(0,0)[c]{}edge(1)edge(2);
		\end{tikzpicture}
		\quad
		\begin{tikzpicture}
		\node at (0.5,-1)[n]{$\m D_{15}$};
		\node at (0,5)[n]{$ $};
		\node(7) at (0,4)[c]{};
		\node(6) at (1,3){}edge(7);
		\node(5) at (-1,3){}edge(7);
		\node(4) at (2,2){}edge(6);
		\node(3) at (0,2)[c]{}edge(5)edge(6);
		\node(2) at (1,1)[c]{}edge(3)edge(4);
		\node(1) at (-1,1){}edge(3);
		\node(0)at(0,0)[c]{}edge(1)edge(2);
		\end{tikzpicture}
		\
		\begin{tikzpicture}
		\node at (0.5,-1)[n]{$\m D_{16}$};
		\node at (0,5)[n]{$ $};
		\node(7) at (0,4)[c]{};
		\node(6) at (1,3)[c]{}edge(7);
		\node(5) at (-1,3){}edge(7);
		\node(4) at (2,2){}edge(6);
		\node(3) at (0,2)[c]{}edge(5)edge(6);
		\node(2) at (1,1)[c]{}edge(3)edge(4);
		\node(1) at (-1,1){}edge(3);
		\node(0)at(0,0)[c]{}edge(1)edge(2);
		\end{tikzpicture}
    \caption{Subdirectly irreducible $d\ell \p$-closure algebras up to 8 elements (black = closed).}
    \label{fig:sidlp}
\end{figure}
In \cite{P1996} and \cite{P1999} a characterization of the simple and subdirectly irreducible $d\ell \p$-chains, or totally ordered modal lattices, is given. Recall that a $d\ell \p$-chain is an algebra $(L, \wedge, \vee, \bot, \top,\p)$ that is a linearly ordered bounded distributive lattice with normal unary operator $\p $. We denote the following $d\ell \p$-chains by $\mathbf{A}_k$, $\mathbf{B}_k$, and $\mathbf{B'}_k$ for any integer $k \geq  1$:

Let $\top=a_0>a_1>a_2>\dots>\bot$ and $\bot=b_0<b_1<b_2<\dots<\top$ be bounded countable decreasing and increasing chains respectively.
Then the operator $\p $ is defined in each structure as follows:

In $\mathbf{A}_k=\{\top,a_1,\dots,a_{k-1},\bot\}$, $\p a_i = a_{i-1}$ for $1 \leq i < k$, $\p\top = \top$, and $\p\bot = \bot$.

In $\mathbf{B}_k=\{\bot,b_1,\dots,b_{k-1},\top\}$, $\p b_i = b_{i-1}$ for $1 \leq i < k$, $\p\top = \top$, and $\p\bot = \bot$.

In $\mathbf{B}'_k=\{\bot,b_1,\dots,b_{k-1},\top\}$, $\p b_i = b_{i-1}$ for $1 \leq i < k$, $\p\top=b_{k-1}$ and $\p\bot = \bot$.

In $\mathbf{A}_\infty=\{\top,a_1,a_2,\dots,\bot\}$, $\p a_i = a_{i-1}$ for $1 \leq i$, $\p\top = \top$, and $\p\bot = \bot$.

In $\mathbf{B}_\infty=\{\bot,b_1,b_2,\dots,\top\}$, $\p b_i = b_{i-1}$ for $1 \leq i$, $\p\top = \top$, and $\p\bot = \bot$.

\begin{thmC}[\cite{P1996}] \hfill
\begin{enumerate}
    \item The simple $d\ell \p$-chains are the algebras $\mathbf{A}_1$ and $\mathbf{B'}_1$.
    \item The subdirectly irreducible $d\ell \p$-chains are the algebras $\mathbf{A}_k, \mathbf{B}_k, \mathbf{B'}_k$ for every natural number $k$ and $\mathbf{A}_\infty, \mathbf{B}_\infty$.
\end{enumerate}
\end{thmC}
These chains are pictured in Figure \ref{fig:sidlpch}. All subdirectly irreducible $d\ell \p$-closure algebras up to cardinality $8$ are shown in Figure~\ref{fig:sidlp}. Note that $\m D_{12}$ does not satisfy the identity $\p x\wedge \p y\le \p((\p x\wedge y)\vee(x\vee \p y))$, hence by Lemma~\ref{five} the corresponding unary-determined magma is not associative.

\begin{cor} Linear $d\ell \p$-closure algebras of size $n \geq 4$ are not subdirectly irreducible.
\end{cor}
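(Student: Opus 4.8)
The plan is to read the corollary off the classification theorem of \cite{P1996} quoted above, by determining which of the subdirectly irreducible $d\ell\p$-chains $\mathbf{A}_k,\mathbf{B}_k,\mathbf{B'}_k,\mathbf{A}_\infty,\mathbf{B}_\infty$ are actually $d\ell\p$-closure algebras, i.e.\ have $\p$ inflationary ($x\le\p x$) and idempotent ($\p\p x=\p x$). A linear $d\ell\p$-closure algebra is nothing but a $d\ell\p$-chain satisfying these two extra identities, and subdirect irreducibility is an intrinsic property of an algebra (it asks for a monolith in its congruence lattice, regardless of the ambient variety). Hence every subdirectly irreducible $d\ell\p$-closure chain must already appear in the list of that theorem, and it suffices to filter that list by the two closure conditions and observe that every survivor has at most three elements.

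First I would impose the inflationary condition $x\le\p x$, which immediately kills all the $\mathbf B$-type chains beyond the two-element one: in $\mathbf{B}_k$ and $\mathbf{B}_\infty$ one has $\p b_i=b_{i-1}<b_i$, and in $\mathbf{B'}_k$ one moreover has $\p\top=b_{k-1}<\top$ for $k\ge 2$ (and $\p\top=\bot<\top$ when $k=1$). So none of these is inflationary once it has more than two elements, and the only closure algebra they contribute is the two-element chain $\mathbf{B}_1=\mathbf{A}_1$.

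Next I would turn to the $\mathbf A$-chains, where $\p$ is automatically inflationary since $\p a_i=a_{i-1}\ge a_i$; here the obstruction is idempotence. For any index $i\ge 2$ we compute $\p\p a_i=\p a_{i-1}=a_{i-2}\ne a_{i-1}=\p a_i$, so $\mathbf{A}_k$ fails to be a closure algebra as soon as it contains $a_2$, that is, for all $k\ge 3$, and likewise $\mathbf{A}_\infty$ fails. The only $\mathbf A$-type closure algebras are therefore $\mathbf{A}_1$ (two elements) and $\mathbf{A}_2$ (three elements), the latter because in $\mathbf{A}_2$ we have $\p a_1=\top$, which is a fixed point.

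Combining these observations, the subdirectly irreducible $d\ell\p$-closure chains are exactly $\mathbf{A}_1$ and $\mathbf{A}_2$, both of size at most three, so no linear $d\ell\p$-closure algebra with $n\ge 4$ elements is subdirectly irreducible. The only point needing care—and essentially the sole obstacle—is the bookkeeping verifying that the two closure axioms really do eliminate every infinite and every $\ge 4$-element member of the classification; this is the short computation above, and it becomes transparent once one notes that $\p$ acts as a strict upward shift on the $\mathbf A$-chains (defeating idempotence) and a strict downward shift on the $\mathbf B$-chains (defeating inflationarity).
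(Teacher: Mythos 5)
Your proof is correct and takes essentially the same route as the paper: both invoke the classification of subdirectly irreducible $d\ell\p$-chains from \cite{P1996} and then eliminate every candidate of size at least four using the closure-operator axioms (inflationarity kills the $\mathbf{B}_k$, $\mathbf{B}'_k$ and $\mathbf{B}_\infty$ chains, idempotence kills $\mathbf{A}_k$ for $k\ge 3$ and $\mathbf{A}_\infty$), leaving only $\mathbf{A}_1$ and $\mathbf{A}_2$. If anything, your explicit observation that subdirect irreducibility is intrinsic to the algebra---so the classification of $d\ell\p$-chains applies verbatim inside the subvariety of closure algebras---is a point the paper leaves implicit, since its proof is phrased in terms of the downset algebra $D(\m W)$ of a linearly ordered $P$-frame.
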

\begin{proof}
Let $\mathbf{W}$ be a linearly-ordered preorder-forest $P$-frame with corresponding linear $d\ell \p$-closure algebra $D(\mathbf{W})$. Suppose that $D(\mathbf{W})$ is subdirectly irreducible. Then $D(\mathbf{W})$ is of the form $\mathbf{A}_k$, $\mathbf{B}_k$, or $\mathbf{B'}_k$ for some $k$ in the natural numbers, or $D(\mathbf{W})$ is of the form $\mathbf{A}_\infty$ or $\mathbf{B}_\infty$. 

By Lemma~\ref{frameprops}, since $P$ is reflexive, $X \leq \p X$ for all $X \in D(\mathbf{W})$. But in $\mathbf{B}_\infty$ or $\mathbf{B}_k$ with $k \geq 2$, there exists $X$ such that $\p X < X$, so $D(\mathbf{W})$ cannot be of this form.

We also have that $\p\top = \top$ in all $d\ell \p$-closure algebras, so we cannot have $D(\mathbf{W}) = C_k$ for any $k > 0$.

Now suppose that $D(\mathbf{W}) = \mathbf{A}_k$ where $k$ is a natural number or $\infty$. Suppose $k \geq 3$. Then there exist $X_i, X_{i+1}, X_{i+2} \in D(\mathbf{W})$ such that $\p X_i = X_{i+1} \neq X_{i+2} = \p \p X_i$. Hence $\p $ is not a closure operator, a contradiction.
\end{proof}

Hence the only subdirectly irreducible linear $d\ell \p$-closure algebras are $\mathbf{A}_1$, and $\mathbf{A}_2$, pictured in Figure \ref{fig:sidlp}. Since $d\ell \p$-algebras have lattice reducts, the variety of all $d\ell \p$-algebras is congruence distributive, and it follows from J\'onsson's Lemma \cite{Jon1967} that nonisomorphic finite subdirectly irreducible $d\ell \p$-algebras generate distinct varieties. Moreover, these varieties are completely join-irreducible elements of the lattice of all varieties. A diagram of the poset of join-irreducible varieties generated by $d\ell \p$-chains and the algebras $\m D_1$--$\m D_{16}$ is shown in Figure~\ref{fig:jidlp}. The variety generated by an algebra $\m A$ is denoted by $\mathcal A=\mathbb V(\m A)$. Equational bases for the varieties generated by bounded $d\ell \p$-chains are given in \cite{P1996}.
\begin{figure}
	\tikzstyle{every picture} = [scale=1]
	\tikzstyle{every node} = [draw, fill=white, ellipse, inner sep=0pt, minimum size=4pt]
	\tikzstyle{n} = [draw=none, rectangle, inner sep=2pt] 
    \centering
		\begin{tikzpicture}
		\node(D16) at (-1,4)[n]{$\mathcal D_{16}$};
		\node(D14) at (-1.75,4)[n]{$\mathcal D_{14}$};
		\node(D15) at (-4,4)[n]{$\mathcal D_{15}$};
		\node(D13) at (-4.75,4)[n]{$\mathcal D_{13}$};
		\node(D10) at (-7,5)[n]{$\mathcal D_{10}$};
		\node(D8) at (-8,5)[n]{$\mathcal D_{8}$};
		\node(D4) at (-7,4)[n]{$\mathcal D_4$}edge(D8)edge(D10);
		\node(D11) at (-2.5,4)[n]{$\mathcal D_{11}$};
		\node(D12) at (-1,3)[n]{$\mathcal D_{12}$};
		\node(D9) at (-3.25,4)[n]{$\mathcal D_{9}$};
		\node(D7) at (-2,3)[n]{$\mathcal D_{7}$}edge(D11)edge(D14)edge(D16);
		\node(D5) at (-3,3)[n]{$\mathcal D_{5}$}edge(D9)edge(D11);
		\node(D6) at (-4,3)[n]{$\mathcal D_{6}$}edge(D10)edge(D13)edge(D15);
		\node(D3) at (-5.5,3)[n]{$\mathcal D_3$}edge(D4);
		\node(D2) at (-2.5,2)[n]{$\mathcal D_2$}edge(D6)edge(D5)edge(D7)edge(D12);
		\node(D1) at (-4,2)[n]{$\mathcal D_1$}edge(D3)edge(D6);
		\node(C5) at (2,4)[n]{$\vdots$};
		\node(C4) at (2,3)[n]{$\mathcal B'_4$};
		\node(C3) at (1.5,2)[n]{$\mathcal B'_3$}edge(C4);
		\node(C2) at (1,1)[n]{$\mathcal B'_2$}edge(C3);
		\node(C1) at (0.5,0)[n]{$\mathcal B'_1$}edge(C2);
		\node(B5) at (1,4)[n]{$\vdots$};
		\node(B4) at (1,3)[n]{$\mathcal B_4$};
		\node(B3) at (.5,2)[n]{$\mathcal B_3$}edge(B4);
		\node(B2) at (0,1)[n]{$\mathcal B_2$}edge(B3);
		\node(A5) at (0,4)[n]{$\vdots$};
		\node(A4) at (0,3)[n]{$\mathcal A_4$};
		\node(A3) at (-.5,2)[n]{$\mathcal A_3$}edge(A4);
		\node(A2) at (-1,1)[n]{$\mathcal A_2$}edge(D1)edge(D2)edge(A3);
		\node(A1) at (-.5,0)[n]{$\mathcal{DL}$}edge(A2)edge(B2);
		\draw[ultra thick](D1)--(D3);
		\draw[ultra thick](D2)--(D6);
		\draw[ultra thick](D2)--(D7);
		\draw[ultra thick](C1)--(C2)--(C3)--(C4);
		\end{tikzpicture}
    \caption{Some join-irreducible varieties of $d\ell \p$-closure algebras and bounded $d\ell \p$-chains ordered by inclusion. Lines are thin if $\m A\in\mathbb{S}(\m B)$ and thick if $\m A\in\mathbb{HS}(\m B)$ for generating algebras $\m A,\m B$.}
    \label{fig:jidlp}
\end{figure}
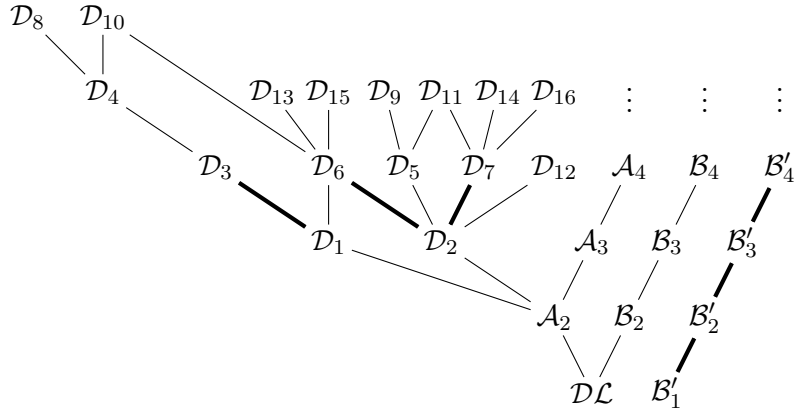

Varieties of unary-determined bunched implication algebras are obtained from Heyting algebras with a residuated closure operator (Corollary~\ref{BItermeq}). For a Heyting algebra, the congruence lattice is isomorphic to the set of filters (ordered by reverse inclusion). Hence the subdirectly irreducible Heyting algebras are characterized by having a unique coatom. In particular, all finite Heyting chains are subdirectly irreducible which leads to the following result.

\begin{thm}
All finite Heyting chains with additional operations are subdirectly irreducible. This includes all finite bunched implication chains and all finite Heyting chains with residuated closure operators.
\end{thm}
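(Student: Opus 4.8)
The plan is to reduce everything to the fact, recorded just before the statement, that for a Heyting algebra the congruence lattice is isomorphic to the lattice of filters. First I would observe that when a finite Heyting chain is expanded by an arbitrary family of additional operations to an algebra $\m A$, every congruence of $\m A$ is in particular a congruence of its Heyting reduct, since it must respect all the Heyting operations. Hence $\Con(\m A)$ is a subset of the congruence lattice of the reduct, ordered by inclusion.

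Next I would compute the congruence lattice of a finite $n$-element Heyting chain $\top=a_0>a_1>\dots>a_{n-1}=\bot$. Its filters are exactly the principal upsets $\ua a_i=\{a_0,\dots,a_i\}$, which form a chain under inclusion, so by the filter correspondence the congruence lattice of the reduct is a finite chain with $n$ elements. Since any subset of a chain is linearly ordered, $\Con(\m A)$ is itself a finite chain.

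To finish, I would note that $\m A$ is nontrivial (a Heyting chain satisfies $\bot<\top$), so $\Con(\m A)$ contains the distinct congruences $\Delta$ and $\nabla$ and is thus a finite chain with at least two elements. Every such chain has a unique atom, namely the unique congruence covering $\Delta$; this is precisely a monolith, so $\m A$ is subdirectly irreducible (and simple exactly when $\Con(\m A)=\{\Delta,\nabla\}$). The two highlighted cases---finite bunched implication chains and finite Heyting chains with residuated closure operators---are particular instances of finite Heyting chains expanded by additional operations, hence are covered at once.

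I expect no serious obstacle; the only point needing care is uniformity over the added operations. The argument goes through because enlarging the signature can only delete congruences (those failing to respect the new operations), and deleting elements from a chain leaves a chain. In particular the monolith of $\m A$ need not agree with that of the reduct, yet $\Con(\m A)$ is still a nontrivial finite chain and so retains its own monolith. Finiteness is genuinely used here: for an infinite Heyting chain the filter chain, and hence the congruence chain, may lack an atom above $\Delta$, so a monolith may fail to exist.
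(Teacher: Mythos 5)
Your proof is correct and follows essentially the same route the paper sketches: the filter--congruence correspondence for Heyting algebras, plus the observation that every congruence of the expanded algebra is a congruence of the Heyting reduct, whose congruence lattice for a finite chain is itself a finite chain and hence has a unique atom. Your explicit cautions (the monolith of the expansion may differ from that of the reduct, and finiteness is genuinely needed) are accurate refinements of the paper's implicit argument.
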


According to Theorem~\ref{chains} there are $2^{n-2}$ unary-determined commutative doubly idempotent linear semirings with $n$ elements, and if they are expanded with a Heyting implication (i.e. a residual of the meet operation) they are term-equivalent to $2^{n-2}$ Heyting chains with a residuated closure operator. Bunched implication algebras have an identity element, so in this variety there are $n-1$ subdirectly irreducible unary-determined bunched implication (BI) chains with $n$ elements, denoted by $\m C_{nk}$ for $1\le k<n$. The structure of these chains is described in the proof of Theorem~\ref{chains} and illustrated on the left in Figure~\ref{fig:bichains}. 

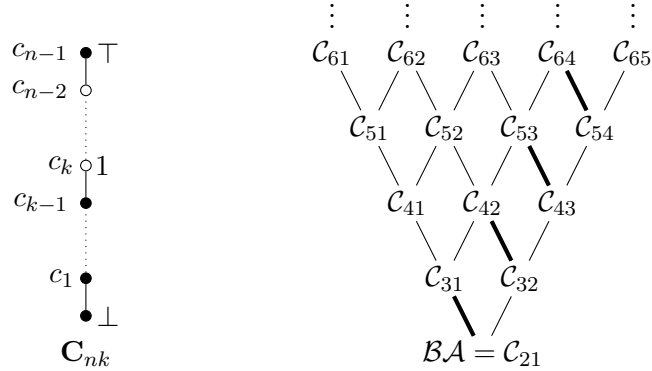
\begin{figure}
	\tikzstyle{every node} = [draw, fill=white, ellipse, inner sep=0pt, minimum size=4pt]
	\tikzstyle{n} = [draw=none, rectangle, inner sep=2pt] 
	\tikzstyle{c} = [draw, fill=black, circle, inner sep=0pt, minimum size=4pt]
        \centering
		\begin{tikzpicture}[scale=.5]
		\node at (0,-1)[n]{$\m C_{nk}$};
		\node(5)at(0,7)[c,label=left:$c_{n-1}$,label=right:$\top$]{};
		\node(4)at(0,6)[label=left:$c_{n-2}$]{}edge(5);
		\node(3)at(0,4)[label=left:$c_{k}$,label=right:$1$]{}edge[dotted](4);
		\node(2)at(0,3)[c,label=left:$c_{k-1}$]{}edge(3);
		\node(1)at(0,1)[c,label=left:$c_{1}$]{}edge[dotted](2);
		\node(0)at(0,0)[c,label=right:$\bot$]{}edge(1);
		\end{tikzpicture}
		\qquad\qquad\qquad
		\begin{tikzpicture}[scale=1]
		\node(C5) at (2,3.6)[n]{$\vdots$};
		\node(C4) at (1,3.6)[n]{$\vdots$};
		\node(C3) at (0,3.6)[n]{$\vdots$};
		\node(C2) at (-1,3.6)[n]{$\vdots$};
		\node(C1) at (-2,3.6)[n]{$\vdots$};
		\node(C65) at (2,3)[n]{$\mathcal C_{65}$};
		\node(C64) at (1,3)[n]{$\mathcal C_{64}$};
		\node(C63) at (0,3)[n]{$\mathcal C_{63}$};
		\node(C62) at (-1,3)[n]{$\mathcal C_{62}$};
		\node(C61) at (-2,3)[n]{$\mathcal C_{61}$};
		\node(C54) at (1.5,2)[n]{$\mathcal C_{54}$}edge(C64)edge(C65);
		\node(C53) at (.5,2)[n]{$\mathcal C_{53}$}edge(C63)edge(C64);
		\node(C52) at (-.5,2)[n]{$\mathcal C_{52}$}edge(C62)edge(C63);
		\node(C51) at (-1.5,2)[n]{$\mathcal C_{51}$}edge(C61)edge(C62);
		\node(C43) at (1,1)[n]{$\mathcal C_{43}$}edge(C53)edge(C54);
		\node(C42) at (0,1)[n]{$\mathcal C_{42}$}edge(C52)edge(C53);
		\node(C41) at (-1,1)[n]{$\mathcal C_{41}$}edge(C51)edge(C52);
		\node(C32) at (0.5,0)[n]{$\mathcal C_{32}$}edge(C42)edge(C43);
		\node(C31) at (-.5,0)[n]{$\mathcal{C}_{31}$}edge(C41)edge(C42);
		\node(C21) at(0,-1)[n]{$\mathcal{BA}=\mathcal C_{21}$}edge(C31)edge(C32);
		\draw[ultra thick](C31)--(C21);
		\draw[ultra thick](C42)--(C32);
		\draw[ultra thick](C53)--(C43);
		\draw[ultra thick](C64)--(C54);
		\end{tikzpicture}
    \caption{All finite subdirectly irreducible unary-determined BI-chains (black elements are closed) and the poset of join-irreducible varieties they generate.}
    \label{fig:bichains}
\end{figure}

The variety generated by linearly ordered Heyting algebras is also known as the variety of G\"odel algebras, and it has a countable chain of subvarieties, each generated by a finite G\"odel chain. The BI-chains $\m C_{n,n-1}$ generate this chain of subvarieties since they satisfy $\p x=x$, i.e. all their elements are closed and $1=\top$.

From the structure of the subdirectly irreducible BI-chains $\m C_{nk}$ one can observe the following result.

\begin{thm}
For $n>1$ and $k\ge 1$, each BI-chain $\m C_{n,k}$ is embedded in $\m C_{n+1,k+1}$.

\noindent
For $n>2$ and $k\ge 1$, each BI-chain $\m C_{n,k}$ is embedded in $\m C_{n+1,k}$.

\noindent
For $n>2$, each BI-chain $\m C_{n,n-2}$ maps homomorphically onto $\m C_{n-1,n-2}$.
\end{thm}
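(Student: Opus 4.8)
The plan is to work from the explicit description of the chains $\m C_{n,k}$ that is implicit in the proof of Theorem~\ref{chains}. Writing the underlying chain as $c_0<c_1<\dots<c_{n-1}$ with $1=c_k$, the closed elements are exactly $c_0,\dots,c_{k-1}$ together with $\top=c_{n-1}$; the operator acts by $\p c_i=c_i$ for $i<k$ and $\p c_i=\top$ for $k\le i\le n-2$; and its residual satisfies $\p^*c_i=c_i$ for $i<k$ while $\p^*$ sends every non-closed element to $c_{k-1}$, with $\p^*\top=\top$. Since the lattice is a chain, $\wedge,\vee,\to$ are forced, and by Corollary~\ref{BItermeq} both $\nstar$ and $\bimp$ are term-definable from $\le,\p,\p^*$ (indeed $\p x=x\nstar\top$ and $\p^*x=\top\bimp x$). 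Hence a subset is a BI-subalgebra precisely when it contains $\bot,\top,1$ and is closed under $\p$ and $\p^*$, and a lattice congruence is a BI-congruence precisely when it is compatible with $\p$ and $\p^*$. This reduces every clause of the theorem to a check on the pair $(\p,\p^*)$.

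For the two embedding statements I would exhibit the maps directly. In $\m C_{n+1,k+1}$ (chain $d_0<\dots<d_n$, with $1=d_{k+1}$ and $\p^*(1)=d_k$) the non-closed block has the same length as in $\m C_{n,k}$ and there is exactly one surplus closed element below $1$; I would embed $\m C_{n,k}$ by \emph{deleting one closed element lying strictly below} $\p^*(1)=d_k$, for instance $\phi(c_0)=d_0$ and $\phi(c_i)=d_{i+1}$ for $1\le i\le n-1$, whose image omits only $d_1$. For $\m C_{n,k}\hookrightarrow\m C_{n+1,k}$ the codomain instead carries one surplus \emph{non-closed} element, so I would delete a non-closed element lying strictly below $\top$, sending $c_i\mapsto d_i$ for $i\le n-2$ and $c_{n-1}\mapsto\top$. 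In both cases $\wedge,\vee,\to,\nstar$ are immediate and preservation of $\p$ is routine (closed elements go to closed elements and non-closed elements go to $\top$); the one delicate point is that the image be closed under $\p^*$, equivalently under $\bimp$, which is exactly what makes the chosen deletion the ``redundant'' one.

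The surjection statement is dual. The target $\m C_{n-1,n-2}$ is the $(n-1)$-element G\"odel chain ($1=\top$, $\p=\mathrm{id}$), so any BI-map onto it must identify each $x$ with $\p x$; this forces the lattice congruence $\theta$ that collapses the unique non-closed element $c_{n-2}=1$ of $\m C_{n,n-2}$ with $\top$. The quotient chain $\m C_{n,n-2}/\theta$ then has $n-1$ elements with inherited operator the identity, which is the shape of $\m C_{n-1,n-2}$, and the whole content of the clause is that $\theta$ is \emph{genuinely a BI-congruence}. I expect this $\p^*$-compatibility to be the main obstacle throughout: because $\p$ contracts the entire non-closed block to $\top$, the residual $\p^*$ contracts that block onto the single value $c_{k-1}=\p^*(1)$, and it is precisely this value that every subalgebra and every congruence must track. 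I would therefore structure the proof so that the $\p^*$/$\bimp$ step is the last and central one, and I would cross-check it against the $P$-frame picture of the earlier sections, where $\m C_{n,k}=\m D(\m W)$ for a linear $P$-frame consisting of a $P$-chain of singletons topped by a single $P$-cluster: the three statements become merging two lower singletons, merging two points of the top cluster, and enlarging the top cluster by one point, and the residual condition reappears transparently as the back-condition of a bounded morphism (together with preservation of the identity downset $E$ of Lemma~\ref{E}).
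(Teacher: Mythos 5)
Your structural description of $\m C_{n,k}$ is exactly right (closed elements $c_0,\dots,c_{k-1},\top$; $\p$ collapses the non-closed block to $\top$; $\p^*$ fixes closed elements below $1$, sends every non-closed element to $c_{k-1}$, and $\p^*\top=\top$), your reduction of subalgebras and congruences to $(\p,\p^*)$-compatibility is sound, and your map for the second clause is a genuine BI-embedding for $k\le n-2$. The problem is that the two places where you defer or gloss the $\p^*$ check are exactly the places where the argument collapses. (a) Clause 1 at $k=1$, $n\ge 3$: the element your map omits is $d_1$, but for $k=1$ this is not ``a closed element strictly below $\p^*(1)$'' --- it \emph{is} $\p^*(1)=\top\bimp 1$, the value of a ground term, so it lies in every subalgebra; concretely your map sends $\p^*(c_1)=\bot$ to $d_0$ while $\p^*(\phi(c_1))=d_1$. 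No other map helps: $\m C_{n,1}$ satisfies the ground equation $\top\bimp 1=\bot$ (its identity is non-closed when $n\ge3$), $\m C_{n+1,2}$ does not, and ground terms take the same value in a subalgebra as in the ambient algebra, so no $\bimp$-preserving embedding exists at all. (b) Clause 3: the kernel is forced, as you say, to be the congruence $\theta$ collapsing $\{1,\top\}$, but $\theta$ fails precisely the $\p^*$-compatibility test you postponed: $1\,\theta\,\top$ while $\p^*1=c_{n-3}$ and $\p^*\top=\top$ lie in distinct classes. Equivalently, a surjection commuting with $\bimp$ would have to send $c_{n-3}=\top\bimp 1$ to $\top\bimp 1=\top$ in the G\"odel chain, leaving only $n-3$ elements to cover the remaining $n-2$ targets; so no such homomorphism exists.

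These are not repairable defects of your particular maps: they show the theorem is \emph{false} if ``embedded'' and ``maps homomorphically onto'' are required to respect $\bimp$ (equivalently $\p^*$), which is the reading your whole framework enforces. The statement (which the paper asserts as an observation, without proof) is true only in the signature without $\bimp$, i.e.\ for the reducts $(A,\wedge,\vee,\to,\bot,\top,\nstar,1)$, equivalently Heyting algebras with the closure operator $\p$ and constant $1$; this is also the reading consistent with the paper's figure, which records only $\p$ and marks the clause-3 edges as needing $\mathbb{H}$ rather than $\mathbb{S}$. In that reduct signature your two embedding maps are correct verbatim for every $k\ge 1$ (delete the $\p^*$ verification: $\nstar$ is a term in $\p,\wedge,\vee$, and $\to$ on chains is determined by the order and $\top$), and the clause-3 congruence is compatible with $\wedge,\vee,\to,\p$ and the constants, so the quotient map works. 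Two further range points you should make explicit: clause 2 genuinely needs $k\le n-2$ (for $k=n-1$ the domain satisfies $1=\top$ and the codomain does not, so there is no embedding in any signature --- your map tacitly assumes this), and clause 1 at $k=1$ is unproblematic only for $n=2$. So: right skeleton, right identification of the crux, but the crux resolves negatively, and the proof only goes through after discarding the $\p^*$/$\bimp$ requirement rather than verifying it.
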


Based on this result, the poset of join-irreducible varieties of bunched implication algebras that are generated by finite unary-determined BI-chains is shown on the right in Figure~\ref{fig:bichains}. Note that the two-element BI-chain $\m C_{2,1}$ is term-equivalent to the two-element Boolean algebra and generates the smallest nontrivial variety.

\section{Conclusion}

We showed that unary-determined $d\ell$-magmas have a simple algebraic
structure given by two unary operators and that their relational frames are definitionally equivalent to frames with two binary relations. The complex algebras of these frames are complete distributive lattices with completely distributive operators, hence they have residuals and can be considered Kripke semantics for unary-determined bunched implication algebras and bunched implication logic.
Associativity of the binary operator for idempotent unary-determined algebras can be checked by an identity with 2 rather than 3 variables, and for the frames by a 3-variable universal formula rather than a 6-variable universal-existential formula. All idempotent Boolean magmas are unary-determined, hence these results significantly extend the structural characterization of idempotent atomic Boolean quantales in \cite{AJ2020} and relate them to bunched implication logic. As an application we counted the number of preorder forest $P$-frames with $n$ elements for which the partial order is an antichain, as well as the number of linearly ordered preorder $P$-frames. We also found all subdirectly irreducible $d\ell \p$-closure algebras up to cardinality 8, as well as all finite subdirectly irreducible unary-determined BI-chains and showed how the varieties they generate are related to each other by subclass inclusion. 

\textbf{Acknowledgements.}
The investigations in this paper made use of Prover9/Mace4 \cite{McC}. In particular,
parts of Lemma~\ref{pqprops} and Theorem~\ref{equivframes} were developed with the help of Prover9 (short proofs were extracted from the output) and the results in Table~\ref{nofmagmas} were calculated with Mace4. The remaining results in Sections 2--4 were proved manually, and later also checked with Prover9.

\bibliographystyle{alphaurl}
\bibliography{refs}
\end{document}